\numberwithin{equation}{section}
\newtheorem{teo}{Theorem}[section]
\newtheorem{pro}[teo]{Proposition}
\newtheorem{lem}[teo]{Lemma}
\newtheorem{cor}[teo]{Corollary}
\newtheorem{rem}[teo]{Remark}
\title[Support of the Brown measure of $Y_tP$]{Support of the Brown measure of the product of a free unitary Brownian motion by a free self-adjoint projection}
\author[N. Demni]{Nizar Demni}
\address{IRMAR, Universit\'e de Rennes 1\\ Campus de
Beaulieu\\ 35042 Rennes cedex\\ France}
\email{nizar.demni@univ-rennes1.fr}
\author[T. Hamdi]{Tarek Hamdi}
\address{Department of Management Information Systems \\ College of Business Management \\ Qassim University \\ Ar Rass \\ Saudi Arabia
and Laboratoire d'Analyse Math\'ematiques et applications LR11ES11 \\ Universit\'e de Tunis El-Manar \\ Tunisie}
\email{ t.hamdi@qu.edu.sa } 
\keywords{Free unitary Brownian motion; self-adjoint projection; Brown measure; Hamiltonian system.}
\begin{document}

\maketitle

\begin{abstract}
The first part of this paper is devoted to the Brown measure of the product of the free unitary Brownian motion by an arbitrary free non negative operator. Our approach follows the one recently initiated by Driver-Hall-Kemp though there are substantial differences at the analytical side. In particular, the corresponding Hamiltonian system is completely solvable and the characteristic curve describing the support of the Brown measure has a non-constant (in time) argument. In the second part, we specialize our findings to the product of the free unitary Brownian motion by a free self-adjoint projection and obtain an explicit description of its support.
\end{abstract}

\tableofcontents

\section{Introduction}
Let  $(\mathcal{A}, \tau)$ be a $W^{\star}$-probability space, that is, $\mathcal{A}$ is a von Neumann algebra with a faithful tracial state $\tau$ and  unit ${\bf 1}$. To an arbitrary $a \in \mathcal{A}$ is associated its Fuglede-Kadison determinant (\cite{Fug-Kad}): 
\begin{equation*}
\Delta(a) := \exp[\tau(\log(|a|))] = \exp \int_{\mathbb{R}} \log(u) \mu_{|a|}(dt) \quad \in [0, +\infty[,
\end{equation*}
where $|a| = (a^{\star}a)^{1/2}$ is the radial part of $a$ and $\mu_{|a|}$ is the spectral measure of $|a|$. If we set 
\begin{equation*}
L(a) := \ln(\Delta(a)) \in [-\infty, +\infty[, 
\end{equation*}
then the map $\lambda \mapsto L(a-\lambda {\bf 1})$ is subharmonic on $\mathbb{C}$ and harmonic on the resolvent set of $a$ (\cite{Brown}). As a matter of fact, the Riesz decomposition Theorem gives rise to a probability measure $\nu_a$ supported in the spectrum of $a$ and called the Brown measure of $a$. Concretely, it is given by 
\begin{equation*}
\frac{1}{2\pi}\nabla^2 L(a-\lambda {\bf 1}) 
\end{equation*}
in the distributional sense and is uniquely determined among all compactly-supported measure by the identity 
\begin{equation*}
L(a-\lambda {\bf 1})  = \int_{\mathbb{C}} \ln(|\lambda - z|)\nu_a(dz). 
\end{equation*}   
Using a regularization argument for the Fuglede-Kadison determinant, the Brown measure can be computed as (see e.g. \cite{Min-Spe}): 
\begin{equation*}
\frac{1}{4\pi} \nabla^2 \lim_{x \rightarrow 0^+} \tau[\log(|a-\lambda {\bf 1}|^2 + x)] = \frac{1}{4\pi} \lim_{x \rightarrow 0^+} \nabla^2\tau[\log(|a-\lambda {\bf 1}|^2 + x)],
\end{equation*}
where the limit in the RHS is in the weak sense. This formula offers the opportunity to use analytical techniques to compute the Brown measure  of $a$ since 
\begin{equation*}
x \mapsto S_a(\lambda, x):= \tau[\log(|a-\lambda {\bf 1}|^2 + x)] 
\end{equation*}
defines an analytic function in the right half-plane and may be expanded for large $|x|$ into a generating function for the moments of $|a-\lambda {\bf 1}|^2$. Actually, if $a$ is a free It\^o process (i.e. solution of a free stochastic differential equation) then such expansion may be turned into a partial differential equation (PDE). This is for instance valid for the free multiplicative Brownian motion and for its additive (circular) counterpart, and led in \cite{DHK} and in \cite{Ho-Zha} respectively to the full description of the corresponding Brown measures.

In this paper we adapt the approach initiated in \cite{DHK} to the partial isomety $Y_tP, t \geq 0$. Here, $Y = (Y_t)_{t\geq0}$ is a free unitary free Brownian motion (\cite{Biane}) and $P$ is a self-adjoint projection in $(\mathcal{A}, \tau)$ with rank $\tau(P) \in (0,1)$ and  free from $Y$. Nonetheless, a large part of our computations applies to operators of the form $Y_th$ where $h$ is a non negative operator free from $Y$, which are the natural dynamical analogues of R-diagonal operators (\cite{Nic-Spe}). Using free stochastic calculus, we derive a nonlinear first-order PDE for the map 
  \begin{equation*}
(t, \lambda, x) \mapsto S(t,\lambda,x):= S_{Y_th}(\lambda, x) =\tau\big[ \log ((Y_th-\lambda)^{\star}(Y_th-\lambda)+x)\big],
\end{equation*}
and write down the corresponding Hamiltonian system of coupled ordinary differential equations (hereafter ode). It turns out that the latter is completely solvable: the characteristic curve 
\begin{equation*}
u \mapsto \lambda(u), \quad u \geq 0,
\end{equation*}
is explicitly determined and allows to solve all the remaining ODEs.  However, for ease of reading, we shall only write down those curves needed for the description of the support of the Brown measure. In particular, we determine the blow-up time of the solution of the Hamiltonian system. Compared to the system studied in \cite{DHK}, the angular momentum is still a constant of motion, yet the argument of the characteristic curve $u \mapsto \lambda(u)$ is no longer constant and is rather affine in time. As to its radius, it solves a non-linear second order ODE so that its expression depends also on its initial speed. The aim of these computations is the following expression of $S$ along the characteristic curves $u \mapsto \lambda(u), u \mapsto x(u),$ valid up to the blow-up time.
 \begin{teo}\label{expression}
As long as the solution of the Hamiltonian system exist, we have
\begin{align}\label{S}
S(u,\lambda(u),x(u))&=\tau(\log(|h-\lambda_0|^2+x_0))+\left(H_0-\frac{1}{2}\right)u+\log|\lambda(u)|-\log|\lambda_0|, 
\end{align}
where we simply write $\lambda_0 := \lambda(0), x_0:= x(0),$ and $H_0$ is the initial value of the Hamiltonian.
\end{teo}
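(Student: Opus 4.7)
The plan is the standard Hamilton--Jacobi recipe: compute $dS/du$ along the characteristic and then integrate. Since the PDE for $S$ obtained from free stochastic calculus has the form $\partial_t S + H = 0$ for a Hamiltonian $H$, and since by construction $H$ is a constant of motion along solutions of the Hamiltonian system, one expects $dS/du$ to be the Lagrangian $p\,\dot q - H_0$ along the trajectory; the claim then reduces to checking that this Lagrangian integrates to $(H_0-1/2)u + \log|\lambda(u)/\lambda_0|$.

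First apply the chain rule, treating $\lambda$ and $\bar\lambda$ as independent variables in the Wirtinger sense:
\begin{equation*}
\frac{d}{du}S(u,\lambda(u),x(u))=\partial_t S + \dot\lambda\,\partial_\lambda S+\dot{\bar\lambda}\,\partial_{\bar\lambda}S+\dot x\,\partial_x S.
\end{equation*}
Substitute the PDE to replace $\partial_t S$ by $-H$, and invoke the conservation of $H$ to replace $H$ by $H_0$. By construction of the Hamiltonian system, $\partial_\lambda S,\partial_{\bar\lambda}S,\partial_x S$ are precisely the momenta $p_\lambda, p_{\bar\lambda}, p_x$ conjugate to $\lambda,\bar\lambda,x$, so that
\begin{equation*}
\frac{dS}{du} \;=\; -H_0+\dot\lambda\,p_\lambda+\dot{\bar\lambda}\,p_{\bar\lambda}+\dot x\,p_x.
\end{equation*}

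The heart of the argument is to simplify the RHS to $H_0-\tfrac12+\tfrac{d}{du}\log|\lambda(u)|$. Using the conservation of the angular momentum and the fact that $\arg \lambda(u)$ is affine in $u$ (both noted in the introduction), one passes to polar coordinates $\lambda(u)=r(u)e^{i\theta(u)}$ and decomposes the momentum-velocity pairing into an angular part and a radial part. The angular part, controlled by the conserved angular momentum together with the affine formula for $\theta(u)$, is expected to deliver the logarithmic derivative $d/du\log|\lambda(u)|$ after identifying $\dot\theta\,p_\theta$ in closed form. The radial part combines with $\dot x\,p_x$ and, using the explicit second-order ODE for $r(u)$ along with the formula for $H_0$, should collapse to the pointwise identity $2H_0-\tfrac12$. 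Integrating from $0$ to $u$ and using $S(0,\lambda_0,x_0)=\tau(\log(|h-\lambda_0|^2+x_0))$ then yields the stated formula.

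The main obstacle is the pointwise identification in the previous paragraph: it requires a careful manipulation of the closed-form solutions of the ODEs (in particular the radial second-order ODE) together with the precise form of the Hamiltonian provided by free stochastic calculus. The constant shift $-1/2$ should reflect a specific non-homogeneous term in the Hamiltonian that is not absorbed by the Legendre transform, while the presence of the $\log|\lambda(u)|$ correction —absent in \cite{DHK}— is precisely the manifestation of the non-constant argument $\arg\lambda(u)$ that distinguishes the present setting from the free multiplicative Brownian motion case.
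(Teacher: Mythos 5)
Your setup is the same as the paper's: along a characteristic, $\frac{dS}{du}=-H_0+\dot\lambda p_\lambda+\dot{\bar\lambda}p_{\bar\lambda}+\dot x p_x$, and the whole theorem reduces to showing that this momentum--velocity pairing equals $2H_0-\tfrac12+\tfrac{d}{du}\log|\lambda(u)|$. That target identity is correct, but the proposal neither proves it nor sketches a route that would work. Concretely, in polar coordinates the angular pairing is $\dot\theta\,p_\theta=\tfrac{K_2}{2}\cdot K_2=\tfrac{K_2^2}{2}$ (Propositions \ref{ConsMot} and \ref{argument}), i.e.\ a \emph{constant}; it cannot "deliver the logarithmic derivative $\frac{d}{du}\log|\lambda(u)|$" as you assert. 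Dually, the radial pairing combined with $\dot x p_x$ is \emph{not} constant: it is the part that carries the non-constant term $\dot v=\frac{d}{du}\log|\lambda|$, so it cannot "collapse to $2H_0-\tfrac12$." Since the entire content of the theorem is exactly this identification, and your plan both leaves it as an acknowledged "main obstacle" and assigns the two contributions to the wrong parts of the decomposition, there is a genuine gap.

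For comparison, the paper avoids any pointwise polar decomposition: it uses Euler's relation for the Hamiltonian (quadratic in the momenta up to the linear terms), $P\cdot\nabla_P H=2H-\tfrac{a}{2}p_a-\tfrac{b}{2}p_b$, so that $\frac{dS}{du}=H_0-\tfrac12(ap_a+bp_b)$; it then integrates, using the constant of motion $K_1=xp_x+\tfrac12(ap_a+bp_b)$, the explicit formula for $x(s)p_x(s)$, and the second-order ODE $x_0p_x^2(0)|\lambda|^2+\ddot{(\ln|\lambda|)}=0$ of Proposition \ref{odelambda} to evaluate the resulting double integral as $-\ln|\lambda(u)|+\ln|\lambda_0|+\dot v(0)u$. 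If you prefer the pointwise version you aimed at, it is available without any closed-form solutions: from \eqref{ode1} and \eqref{ode2} one computes
\begin{equation*}
a\dot a+b\dot b=\frac{a^2+b^2}{2}-\frac{(a^2+b^2)(ap_a+bp_b)}{2},
\qquad\text{hence}\qquad
\frac{d}{du}\log|\lambda(u)|=\frac12\bigl[1-(ap_a+bp_b)(u)\bigr],
\end{equation*}
which, inserted into $\frac{dS}{du}=H_0-\tfrac12(ap_a+bp_b)$, gives $\frac{dS}{du}=H_0-\tfrac12+\frac{d}{du}\log|\lambda(u)|$ and the formula \eqref{S} after integration. Either of these computations is the missing heart of the argument; without one of them your proposal is only a correct statement of what must be checked.
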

Once this formula is obtained, we specialize our computations to the non normal operator $Y_tP$. Our interest in this particular case is partly motivated by the Brown measure of its (strong) limit as $t \rightarrow \infty$ which was completely determined in \cite{Haa-Lar} using the $S$-transform of $P$ (see also \cite{Bia-Leh} for further instances of explicit Brown measures). Another motivation stems from the study of operators of the form $PYQ$, where $Q$ is a selfadjoint projection which is free from $Y$, which are large-size limits (in the sense of mixed moments) of truncations of the Brownian motion in the unitary group. In particular, their limit as $t \rightarrow \infty$ are truncations of Haar unitary matrices whose densities, when they exist, were computed in \cite{Ols}. Even more, the eigenvalues densities of square truncations were used in \cite{Som-Zcy} to analyze statistical properties of random quantum channels exhibiting a chaotic scattering. In this respect, it was further proved in \cite{Pet-Ref} that the empirical measure of any square truncation converges weakly to the Brown measure of $PUP$ in the compressed algebra. Up to an atom at zero and a normalizing constant, the latter coincides with the Brown measure of $UP$ since $\ker(UP) = \ker(P)$ and this coincidence remains valid at finite time $t$ (see \cite{Haa-Lar}, p.350). As to the singular values of (arbitrary) truncations of Haar-unitary matrices and of the unitary Brownian motion, they are squares of the eigenvalues of matrices from the Jacobi unitary ensemble and of the Hermitian Jacobi process respectively. These two matrix models converge in the large-size limit to the free Beta distribution and to the free Jacobi process respectively, and we refer the reader to the papers \cite{Col} and \cite{Ham} for further details and references.

Coming back to the operator $PY_tP$, the main result of this paper provides the following description of the support of its Brown measure $\mu_{(PY_tP)}$:
\begin{teo}\label{support}
Set $\tau(P) :=\alpha\in(0,1)$. Then for any $t > 0$, the support of $\mu_{(PY_tP)}$ is contained in the region enclosed by the Jordan curve $ f_{t,\alpha}\big(F_{t,\alpha} \big)$, where
\begin{align*}
f_{t,\alpha}(z) :=ze^{t\frac{2\alpha-1+z}{2(1-z)}},
\end{align*}
and $F_{t,\alpha}$ is the closure of the set:
\begin{equation*}
\left\{\lambda \in \mathbb{C}: |1-\alpha-\lambda|\ne \alpha, \quad |f_{t,\alpha}(\lambda)|^2 =\frac{\alpha|\lambda|^2}{\alpha|\lambda|^2+(1-\alpha)|1-\lambda|^2} \right\},
\end{equation*}
and is a Jordan curve. 
\end{teo}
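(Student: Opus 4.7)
The plan is to follow the Driver--Hall--Kemp strategy \cite{DHK} adapted to our Hamiltonian system, and then leverage the relation between $\mu_{PY_tP}$ and $\mu_{Y_tP}$. The support of $\mu_{Y_tP}$ is contained in the set of $\lambda\in\mathbb{C}$ where $S(t,\lambda,0^+)$ fails to be harmonic in $\lambda$, and the method of characteristics (via Theorem~\ref{expression}) reduces this to the time-$t$ image of the hypersurface $\{x_0=0\}$ under the forward flow. The transition to $PY_tP$ is handled via $\ker(PY_tP)=\ker(P)$ (\cite{Haa-Lar}, p.~350), which identifies the two Brown measures up to an atom at zero and a normalization constant; since this atom does not affect the outer boundary of the support, the conclusion for $PY_tP$ follows once it is established for $Y_tP$.

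I would specialize $h=P$ throughout. Since $P$ is a self-adjoint projection with $\tau(P)=\alpha$, the spectral measure of $|P-\lambda_0|^2$ is the two-atom measure $\alpha\,\delta_{|1-\lambda_0|^2}+(1-\alpha)\,\delta_{|\lambda_0|^2}$. All quantities built from this measure---the initial Hamiltonian $H_0$, the initial conjugate momenta, and the $\tau(\log(\cdot))$ term in (\ref{S})---become explicit elementary functions of $\lambda_0$ and $x_0$. Plugging this data into the characteristic equations already derived in the paper and imposing $x_0=0$ causes the second-order ODE for the radius of $\lambda$ to admit a first integral with constant (in $u$) logarithmic derivative, and integration yields $\lambda(t)=\lambda_0\exp(t(2\alpha-1+\lambda_0)/(2(1-\lambda_0)))=f_{t,\alpha}(\lambda_0)$. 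This identifies $f_{t,\alpha}$ as the time-$t$ flow of the $x_0=0$ characteristics.

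The set $F_{t,\alpha}$ should then be the locus of admissible starting points $\lambda_0$ whose image $\lambda(t)=f_{t,\alpha}(\lambda_0)$ lies on the topological boundary of the forward image of $\{x_0=0\}$. Writing this out -- equivalently, expressing the compatibility between the initial values of the conserved quantities and the condition $x_0=0$ -- and simplifying using the two-atom structure of $\mu_{|P-\lambda_0|^2}$, one arrives at the algebraic relation $|f_{t,\alpha}(\lambda_0)|^2=\alpha|\lambda_0|^2/[\alpha|\lambda_0|^2+(1-\alpha)|1-\lambda_0|^2]$ in the statement. The excluded circle $|1-\alpha-\lambda_0|=\alpha$ is the locus on which the initial conjugate momentum of $\lambda_0$ (built from $\tau((P-\lambda_0)^{-1})$ via the two atoms) degenerates; removing it ensures that the characteristics through $F_{t,\alpha}$ are well defined.

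The main obstacle is topological: showing that the defining equation actually cuts out a single Jordan curve $F_{t,\alpha}$ in $\mathbb{C}\setminus\{|1-\alpha-\lambda|=\alpha\}$ and that its image under the non-conformal map $f_{t,\alpha}$ is again a Jordan curve. I would parameterize $\lambda_0$ in polar form $\lambda_0=re^{i\theta}$ and check that the defining relation determines $r$ as a single continuous function of $\theta$ on each side of the excluded circle, then estimate the argument of $f_{t,\alpha}(\lambda_0)$ along $F_{t,\alpha}$ to prove injectivity via a monotonicity/winding-number argument. The containment of the support in the enclosed region is then a standard backward-characteristic argument: for $\lambda$ outside $f_{t,\alpha}(F_{t,\alpha})$, the backward flow from $(t,\lambda,0)$ reaches $(0,\lambda_0,x_0)$ with $x_0>0$, the representation (\ref{S}) extends real-analytically in a neighborhood of $\lambda$, and hence $S(t,\lambda,0^+)$ is harmonic there, putting $\lambda$ outside $\mathrm{supp}(\mu_{Y_tP})$.
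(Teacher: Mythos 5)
Your skeleton matches the paper's (specialize $h=P$, identify $f_{t,\alpha}$ as the time-$t$ flow of the $x_0=0$ characteristics, reduce $PY_tP$ to $Y_tP$ through the kernel identity), but the final containment step hides the two hardest points and rests on incorrect claims. First, the backward-characteristic step is wrong as stated: along any solution of \eqref{sys} one has $x(u)=x_0\bigl(1-p_x(0)\int_0^u|\lambda(s)|^2ds\bigr)^2$, so a characteristic ending at $x(t)=0$ has $x\equiv 0$; the backward flow from $(t,\lambda,0)$ never reaches an initial condition with $x_0>0$. What is actually needed is that $\lambda=f_{t,\alpha}(\lambda_0)$ for some $\lambda_0$ whose blow-up time exceeds $t$, i.e. $\lambda_0\notin\overline{\Sigma}_{t,\alpha}$, and this is not automatic: $f_{t,\alpha}$ is neither globally injective nor a global homeomorphism, so ``$\lambda$ outside the region enclosed by $f_{t,\alpha}(F_{t,\alpha})$'' does not by itself locate a preimage outside $\overline{\Sigma}_{t,\alpha}$. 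The paper's Theorem \ref{surj-outside} is devoted precisely to this, using Biane's description of $f_{\alpha t,1}$ to restrict the preimage to $\Gamma_{t,\alpha}\cup 1/\Gamma_{t,\alpha}$, a circle-preimage cardinality argument, and the disjointness $\partial\Sigma_{t,\alpha}\cap(1/\partial\Gamma_{t,\alpha})=\emptyset$ established through Lemmas \ref{deriv} and \ref{numerator}; none of this appears in your plan.

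Second, ``the representation \eqref{S} extends real-analytically near $\lambda$, hence $S(t,\lambda,0^+)$ is harmonic there'' is a double non sequitur. Formula \eqref{S} expresses $S$ as a function of the initial data $(\lambda_0,x_0)$ along characteristics, and the PDE is only valid for $x>0$; to control $\lim_{x\to0^+}S(t,\lambda,x)$ with $\lambda$ \emph{fixed} you must invert the characteristic map $(\lambda_0,x_0)\mapsto(\lambda(t),x(t))$ near $x_0=0$, which is the content of the paper's Lemma \ref{regularity} and the reason $x_0$ is allowed to be slightly negative (Remark \ref{x0negative}); your proposal does not address this passage at all. Moreover, smoothness or real-analyticity of $s_t$ does not give $\Delta s_t=0$ (a smooth strictly positive density is perfectly compatible with analyticity); harmonicity has to be read off from the explicit limiting formula, namely the real part of a holomorphic function of $\lambda_0=f_{t,\alpha}^{-1}(\lambda)$ plus $\alpha\log|1-\lambda_0|^2+(1-\alpha)\log|\lambda_0|^2$ composed with the locally conformal inverse of $f_{t,\alpha}$, which requires computing $H_0$ at $x_0=0$ as in Corollary \ref{outside}; you never exhibit this structure. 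Two lesser points: the claim that the defining relation determines the radius of $\lambda_0$ as a single function of its argument is unsubstantiated (the paper instead uses the M\"obius variable $w=(2\alpha-1+z)/(1-z)$, turning the locus into the graph $y^2=\phi_{t,\alpha}(x)/(e^{tx}-1)$, whose root analysis yields both the Jordan property and the injectivity of $f_{t,\alpha}$ on $F_{t,\alpha}$), and the circle $|1-\alpha-\lambda|=\alpha$ is excluded not because an initial momentum degenerates but because it is the zero set of $\dot v(0)$, on which the defining equation holds identically for every $t$ and hence carries no information about the time-$t$ locus.
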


\begin{figure}
\begin{center}
\includegraphics[width=8cm]{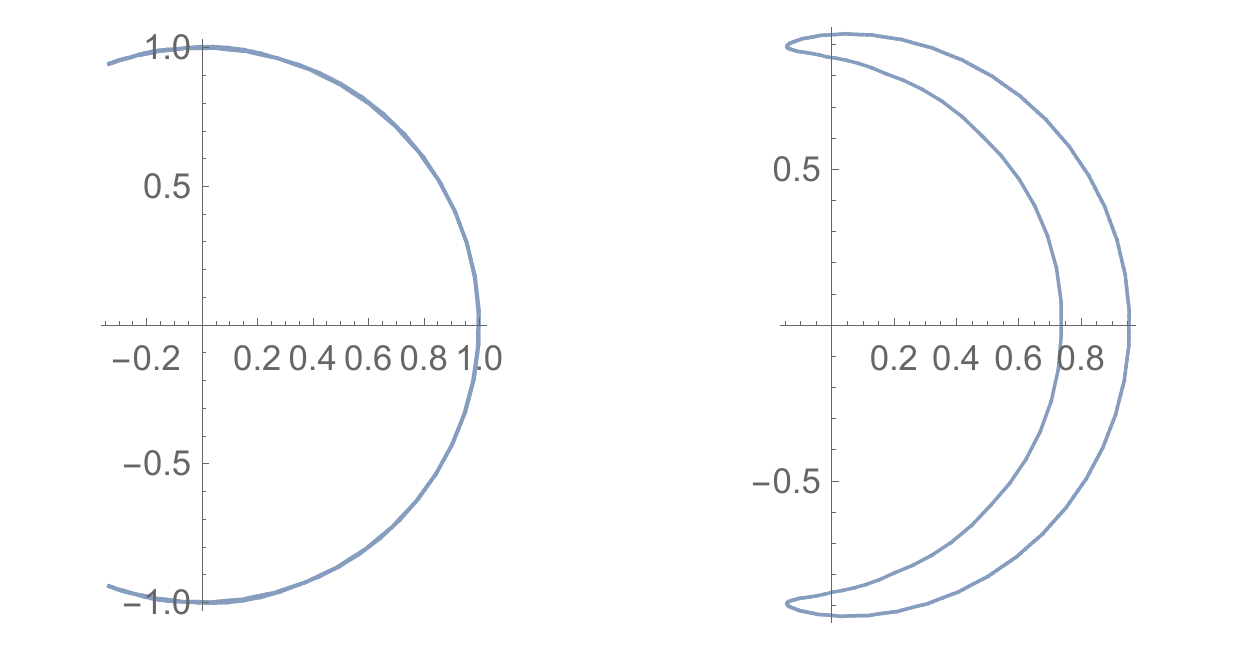} 
\end{center}
\caption{The curve $f_{t,\alpha}(F_{t,\alpha})$ with $(t,\alpha)=(1,1)$ (left) and $(t,\alpha)=(1,0.8)$ (right)}.
\end{figure}

The proof of this theorem is a straightforward consequence of an explicit expression of the function
\begin{equation*}
s_t(\lambda)=\lim_{x\rightarrow0}S(t,\lambda,x), 
\end{equation*}
when $\lambda_0$ lies in some region outside the closure $\overline{\Sigma}_{t,\alpha}$ of the bounded component $\Sigma_{t,\alpha}$ of the complementary of $F_{t,\alpha}$. Actually, these initial values of the characteristic curve $u \mapsto \lambda(u)$ allows to let $x_0$ approaches zero in the expression \eqref{S}, in which case the whole curve $u \mapsto x(u)$ vanishes and the solution of the Hamiltonian system exists up to time $t$.  
On the other hand, we shall retrieve Haagerup-Larsen result for the operator $PUP$: the boundary of the support of $\mu_{(PY_tP)}$ approaches the circle $\mathbb{T}(0,\sqrt{\alpha})$ as $t$ approaches infinity. Let us finally point out that the affine time-dependence of the argument of the curve $u \mapsto \lambda(u)$ makes the description of the non-atomic part of $\mu_{(Y_tP)}$ far from being accessible. For that reason, we postpone this task to a future research work. We would like also to stress that the rank $\alpha = \tau(P)$ varies in $(0,1)$ since some results proved in the sequel do not extend to the value $\alpha = 1$ corresponding to $Y_t$ whose spectrum is already known (\cite{Biane1}).

The paper is organized as follows. In the next section, we perform the preliminary computations and the analysis of the Hamiltionian system associated with the operator $Y_th$, leading to the proof of Theorem \ref{expression}. Section 3 is devoted to the particular case $h = P$. There, we firstly supply a parametrization of $F_{t,\alpha}$ and prove that its image under $f_{t,\alpha}$ is a Jordan curve. Afterwards, we prove that any $\lambda$ lying outside the Jordan domain delimited by $f_{t,\alpha}(F_{t,\alpha})$ is attainable by some characteristic curve starting at $\lambda_0 \notin \overline{\Sigma}_{t,\alpha}$ and derive the explicit expression of $s_t(\lambda)$. The latter is, up to a linear combination of logarithmic potentials of two dirac measures, the real part of a holomorphic function.

\section{Hamiltonian system for $Y_th$}
\subsection{The PDE for $S$}
For the sake of clarity, we introduce the following notations:
\begin{equation*}
a_t = a_t(h)  := Y_th,
\end{equation*}
\begin{equation*}
a_{t,\lambda} = a_{t,\lambda}(h) := |a_t-\lambda|^2=(a_t-\lambda)^{\star}(a_t-\lambda)=h^2-\overline{\lambda}a_t-\lambda a_t^{\star}+|\lambda|^2,
\end{equation*}
where we omit the dependence on $h$. Using free stochastic calculus, we prove:
\begin{pro} \label{moment}
For any $n \geq 1$,
\begin{align*}
\frac{d}{dt}\tau(a_{t,\lambda}^{n})=&\frac{n}{2}\tau\big(a_{t,\lambda}^{n-1}(\overline{\lambda}a_t+\lambda a_t^*)\big)+n|\lambda|^2\sum_{j=0}^{n-2}\tau(h^2a_{t,\lambda}^{j})\tau\big( a_{t,\lambda}^{n-2-j}\big)
\\&-\frac{n}{2}\sum_{j=0}^{n-2}\overline{\lambda}^2\tau(a_ta_{t,\lambda}^{j})\tau\big(a_ta_{t,\lambda}^{n-2-j}\big)+\lambda^2\tau(a_t^{\star}a_{t,\lambda}^{j})\tau\big(a_t^{\star}a_{t,\lambda}^{n-2-j} \big),
\end{align*}
where an empty sum is zero.
\end{pro}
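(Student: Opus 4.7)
The plan is to apply free Itô calculus to $a_{t,\lambda}^n$ and extract the drift. Recalling that the free unitary Brownian motion $Y_t$ satisfies
\begin{equation*}
dY_t = iY_t\,dX_t - \tfrac{1}{2}Y_t\,dt
\end{equation*}
for some free additive Brownian motion $X_t$ freely independent from $h$, and since $h$ is constant in time and self-adjoint, I would first derive
\begin{equation*}
da_t = iY_t(dX_t)h - \tfrac{1}{2}a_t\,dt, \qquad da_t^{\star} = -ih(dX_t)Y_t^{\star} - \tfrac{1}{2}a_t^{\star}\,dt.
\end{equation*}
Substituting into $a_{t,\lambda}=h^2-\bar\lambda a_t-\lambda a_t^{\star}+|\lambda|^2$, the drift of $da_{t,\lambda}$ is $\tfrac{1}{2}(\bar\lambda a_t+\lambda a_t^{\star})\,dt$, and the martingale part is $m_t := -i\bar\lambda Y_t(dX_t)h + i\lambda h(dX_t)Y_t^{\star}$.

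Next I would apply the free Itô product formula
\begin{equation*}
d(a_{t,\lambda}^n) = \sum_{k=0}^{n-1} a_{t,\lambda}^{k}(da_{t,\lambda})a_{t,\lambda}^{n-1-k} + \sum_{0\le j<k\le n-1} a_{t,\lambda}^{j}(da_{t,\lambda})a_{t,\lambda}^{k-j-1}(da_{t,\lambda})a_{t,\lambda}^{n-1-k}.
\end{equation*}
Taking the trace and exploiting traciality, the first sum contributes exactly the drift piece $\tfrac{n}{2}\tau\bigl(a_{t,\lambda}^{n-1}(\bar\lambda a_t+\lambda a_t^{\star})\bigr)dt$ by cyclicity. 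In the second sum only the martingale $\times$ martingale contributions survive, so I would reindex $(p,q,r)=(j,k-j-1,n-1-k)$ with $p+q+r=n-2$ and analyze the four bilinear combinations of the two summands of $m_t$ against the middle block $a_{t,\lambda}^{q}$ using the free Itô rule $(dX_t)\,u\,(dX_t)=\tau(u)\,dt$. The two diagonal terms, together with the unitarity relations $Y_tY_t^{\star}=Y_t^{\star}Y_t=\mathbf{1}$ and the self-adjointness of $h$, yield (after taking trace and using cyclicity)
\begin{align*}
&-\bar\lambda^2\,a_t\,\tau(a_t a_{t,\lambda}^{q}),\qquad -\lambda^2\,a_t^{\star}\,\tau(a_t^{\star} a_{t,\lambda}^{q}),
\end{align*}
as the insertions in the $p,r$ blocks, while the two cross terms give $|\lambda|^2\tau(h^2 a_{t,\lambda}^{q})$ and $|\lambda|^2 h^2\,\tau(a_{t,\lambda}^{q})$ respectively.

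Finally, I would take traces of the remaining blocks $a_{t,\lambda}^{p}\cdot(\cdot)\cdot a_{t,\lambda}^{r}$, using cyclicity once more to absorb $a_{t,\lambda}^{p+r}=a_{t,\lambda}^{n-2-q}$, and then sum over $q$ with multiplicity $(n-1-q)$ coming from the range of $(p,r)$. The key symmetrization step is to average each resulting single sum $\sum_{q=0}^{n-2}(n-1-q)(\cdots)_q(\cdots)_{n-2-q}$ with its image under $q\mapsto n-2-q$, producing the uniform coefficient $\tfrac{n}{2}$ in front of each term. This turns the two cross contributions into the single expression $n|\lambda|^2\sum_{j=0}^{n-2}\tau(h^2 a_{t,\lambda}^{j})\tau(a_{t,\lambda}^{n-2-j})$ and the two diagonal contributions into the $\bar\lambda^2$ and $\lambda^2$ sums of the statement.

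The main obstacle is not conceptual but combinatorial: one must be disciplined in the application of the cyclic property of $\tau$, in particular to convert combinations such as $\tau(Y_t^{\star}a_{t,\lambda}^{q}h)$ into $\tau(a_t^{\star}a_{t,\lambda}^{q})$ (using $hY_t^{\star}=a_t^{\star}$), and to recognize that only the diagonal pairings among the four terms of $(m_t)(\cdot)(m_t)$ actually produce either $a_t$ or $a_t^{\star}$ outside the trace. Once these cyclic reductions are carried out and the symmetrization in $q$ is applied to each double sum, the formula falls out.
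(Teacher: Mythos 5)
Your proposal is correct and follows essentially the same route as the paper: the same Itô decomposition of $da_{t,\lambda}$ into the drift $\tfrac{1}{2}(\overline{\lambda}a_t+\lambda a_t^{\star})dt$ and the martingale part, the same quadratic-bracket computation via $(dX_t)u(dX_t)=\tau(u)dt$ with the unitarity of $Y_t$, and the same traciality/reindexing argument (your symmetrization in $q$ is exactly how the paper's double sum collapses to the coefficients $n$ and $\tfrac{n}{2}$). The signs you report for the diagonal and cross pairings match the paper's, so the argument is sound.
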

\begin{proof}
Since
\begin{equation*}
da_{t,\lambda}=-\overline{\lambda}da_t-\lambda da_t^{\star}=-\overline{\lambda}dY_th-\lambda hdY_t^{\star},
\end{equation*}
then the free It\^o's formula entails:
\begin{equation}\label{mom1}
d(a_{t,\lambda}^n) =\sum_{k=1}^na_{t,\lambda}^{k-1}da_{t,\lambda}a_{t,\lambda}^{n-k}+\sum_{1\leq j<k\leq n} a_{t,\lambda}^{j-1}\left(da_{t,\lambda}a_{t,\lambda}^{k-j-1}da_{t,\lambda}\right)a_{t,\lambda}^{n-k},
\end{equation}
where the term inside the brackets is a free semi-martingale bracket. Moreover, it is known that (see e.g. \cite{Biane}):
\begin{equation*}
dY_{t} = iY_tdX_t-\frac{1}{2}Y_tdt, \quad \quad dY_{t}^{\star}=-idX_tY_t^{\star}-\frac{1}{2}Y_t^{\star}dt,
\end{equation*}
where $(X_t)_{t \geq 0}$ is a free additive Brownian motion. Consequently, 
\begin{align*}
da_{t,\lambda}&=-\overline{\lambda}(iY_tdX_t-\frac{1}{2}Y_tdt)h-\lambda h(-idX_tY_t^{\star}-\frac{1}{2}Y_t^{\star}dt)
\\&= i(-\overline{\lambda}Y_tdX_th+\lambda h dX_tY_t^{\star})+\frac{1}{2}\big(\overline{\lambda}a_t+\lambda a_t^{\star} \big)dt.
\end{align*}
Set $dw_t :=-\overline{\lambda}Y_tdX_th+\lambda h dX_tY_t^*$. Then, for any adapted process $(\kappa_t)_{t \geq 0}$, 
\begin{align*}
dw_t\kappa_t dw_t&=\overline{\lambda}^2\tau(h\kappa_tY_t)a_tdt+\lambda^2\tau(Y_t^*\kappa_th)a_t^{\star}dt-|\lambda|^2\tau(Y_t^{\star}\kappa_tY_t)h^2dt-|\lambda|^2\tau(h\kappa_th)dt
\\&=\overline{\lambda}^2\tau(a_t\kappa_t)a_tdt+\lambda^2\tau(a_t^{\star}\kappa_t)a_t^{\star}dt-|\lambda|^2\tau(\kappa_t)h^2dt-|\lambda|^2\tau(h^2\kappa_t)dt
\end{align*}
Specializing the last equation to $\kappa_t = a_{t,\lambda}^{k-j-1}$ and taking the trace in both sides of \eqref{mom1}, we get: 
\begin{multline*}
\frac{d}{dt}\tau(a_{t,\lambda}^{n})= \frac{1}{2}\sum_{k=0}^{n-1}\tau\big(a_{t,\lambda}^{k}(\overline{\lambda}a_t+\lambda a_t^{\star})a_{t,\lambda}^{n-1-k}\big)
+|\lambda|^2\sum_{i=0}^{n-2}\sum_{j=0}^{n-2-i}\tau\left(a_{t,\lambda}^{i}h^2a_{t,\lambda}^{n-2-i-j}\tau(a_{t,\lambda}^{j})\right.  \\\left. + \tau(h^2a_{t,\lambda}^{j}) a_{t,\lambda}^{n-2-j}\right)
 -\sum_{i=0}^{n-2}\sum_{j=0}^{n-2-i}\tau\left(\overline{\lambda}^2\tau(a_ta_{t,\lambda}^{j})a_{t,\lambda}^{i}a_ta_{t,\lambda}^{n-2-i-j}+\lambda^2\tau(a_t^{\star}a_{t,\lambda}^{j})a_{t,\lambda}^{i}a_t^{\star}a_{t,\lambda}^{n-2-i-j} \right)
\\= \frac{n}{2}\tau\big(a_{t,\lambda}^{n-1}(\overline{\lambda}a_t+\lambda a_t^*)\big)+|\lambda|^2\sum_{i=0}^{n-2}\sum_{j=0}^{n-2-i}\tau\big(h^2a_{t,\lambda}^{n-2-j}\big)\tau(a_{t,\lambda}^{j}) +\tau(h^2a_{t,\lambda}^{j})\tau\big( a_{t,\lambda}^{n-2-j}\big)
\\-\sum_{i=0}^{n-2}\sum_{j=0}^{n-2-i}\overline{\lambda}^2\tau(a_ta_{t,\lambda}^{j})\tau\big(a_ta_{t,\lambda}^{n-2-j}\big)+\lambda^2\tau(a_t^{\star}a_{t,\lambda}^{j})\tau\big(a_t^{\star}a_{t,\lambda}^{n-2-j} \big)
\\= \frac{n}{2}\tau\big(a_{t,\lambda}^{n-1}(\overline{\lambda}a_t+\lambda a_t^{\star})\big)+n|\lambda|^2\sum_{j=0}^{n-2}\tau(h^2a_{t,\lambda}^{j})\tau\big( a_{t,\lambda}^{n-2-j}\big)
\\ -\frac{n}{2}\sum_{j=0}^{n-2}\overline{\lambda}^2\tau(a_ta_{t,\lambda}^{j})\tau\big(a_ta_{t,\lambda}^{n-2-j}\big)+\lambda^2\tau(a_t^{\star}a_{t,\lambda}^{j})\tau\big(a_t^{\star}a_{t,\lambda}^{n-2-j} \big).
\end{multline*}
\end{proof}

Write   
\begin{equation*}
S(t,\lambda,x):= S_{Y_th}(\lambda, x) =\tau\big[ \log (a_{t,\lambda}+x)\big].
\end{equation*}
Using the previous proposition, we get
\begin{cor}\label{partial}
If $\Re(x) > 0$, then the function $S$ satisfies
\begin{align*}
 \frac{\partial S}{\partial t}=&\frac{1}{2}\tau\big((\overline{\lambda}a_t+\lambda a_t^{\star})(a_{t,\lambda}+x)^{-1}\big)-|\lambda|^2\tau\big( h^2(a_{t,\lambda}+x)^{-1}\big)\tau\big((a_{t,\lambda}+x)^{-1}\big)
\\&+\frac{\overline{\lambda}^2}{2}[\tau\big( a_t(a_{t,\lambda}+x)^{-1}\big)]^2
+\frac{\lambda^2}{2}[\tau\big( a_t^{\star}(a_{t,\lambda}+x)^{-1}\big)]^2.
\end{align*}
\end{cor}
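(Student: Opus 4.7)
The plan is to turn the moment identity of Proposition \ref{moment} into an identity of generating functions by expanding $\log(a_{t,\lambda}+x)$ in powers of $1/x$ and resumming the four families of terms as Cauchy products of moment series.

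For $\Re(x)$ large enough that $\|a_{t,\lambda}\|<|x|$, I would start from
\[
S(t,\lambda,x)=\log x+\sum_{n\geq 1}\frac{(-1)^{n+1}}{nx^n}\tau(a_{t,\lambda}^n),
\]
differentiate in $t$ term by term (uniform convergence on compacts of $\{|x|>(\|h\|+|\lambda|)^2\}$ makes this legitimate), and substitute the expression supplied by Proposition \ref{moment}. The prefactors $n/2$ and $n$ appearing there cancel the factor $1/n$ coming from the logarithm expansion, so the four resulting series carry only the weight $(-1)^{n+1}/x^n$.

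Next I would treat the four terms in turn. The contribution of $\tau(a_{t,\lambda}^{n-1}(\overline{\lambda}a_t+\lambda a_t^\star))$ becomes geometric after the shift $k=n-1$ and sums to $\tfrac{1}{2}\tau((\overline{\lambda}a_t+\lambda a_t^\star)(a_{t,\lambda}+x)^{-1})$, which is exactly the first term on the right-hand side. Each of the three remaining contributions is a discrete convolution over $j=0,\dots,n-2$; after the substitution $m=n-2$ it factors as a Cauchy product of two series of the form $\sum_{j\geq 0}(-1)^j\tau(c\,a_{t,\lambda}^j)/x^{j+1}$ with $c\in\{h^2,a_t,a_t^\star,\mathbf{1}\}$. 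Each such series is itself geometric, with sum $\tau(c\,(a_{t,\lambda}+x)^{-1})$. Collecting these four pieces reproduces verbatim the claimed PDE.

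Finally, both sides of the identity are holomorphic in $x$ on the right half-plane $\{\Re(x)>0\}$: indeed, for such $x$ the element $a_{t,\lambda}+x$ has spectrum in $\{\Re(z)>0\}$ and hence is invertible, which also makes $\log(a_{t,\lambda}+x)$ well-defined via the holomorphic functional calculus. Since the identity is already established on the non-empty open subset $\{|x|>(\|h\|+|\lambda|)^2\}$, analytic continuation extends it to the whole right half-plane. The main obstacle is purely bookkeeping: one has to be careful about signs and the index shifts in the Cauchy products, and to verify that for $c\in\{h^2,a_t,a_t^\star,\mathbf{1}\}$ the auxiliary series really converges to the corresponding resolvent trace. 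No genuine analytical difficulty arises beyond this.
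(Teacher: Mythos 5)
Your proposal is correct and follows essentially the same route as the paper: expand $S$ in powers of $1/x$ for large $|x|$, differentiate termwise, insert Proposition \ref{moment} (the factors $n/2$ and $n$ cancelling the $1/n$ from the logarithm), resum the shifted and convolved series into resolvent traces, and extend to $\{\Re(x)>0\}$ by analyticity of both sides. No discrepancy with the paper's argument.
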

\begin{proof}
For large $|x|$, we expand $S$ into power series
\begin{equation*}
    S(t,\lambda,x)=\log x+\sum_{n=1}^\infty \frac{(-1)^{n-1}}{nx^n} \tau\big[( a_{t,\lambda})^n\big],
\end{equation*}
and differentiate it termwise with respect to $t$. Using Proposition \ref{moment}, we get
\begin{align*}
 \frac{\partial S}{\partial t}=&\frac{1}{x}\frac{d}{dt}\tau(a_{t,\lambda})+\frac{1}{2}\sum_{n=2}^{\infty}\frac{(-1)^{n-1}}{x^n}\tau\big(a_{t,\lambda}^{n-1}(\overline{\lambda}a_t+\lambda a_t^{\star})\big)
 +|\lambda|^2\sum_{n=2}^{\infty}\frac{(-1)^{n-1}}{x^n}\sum_{j=0}^{n-2}\tau(h^2a_{t,\lambda}^{j})\tau\big( a_{t,\lambda}^{n-2-j}\big)
\\&-\frac{1}{2}\sum_{n=2}^{\infty}\frac{(-1)^{n-1}}{x^n}\sum_{j=0}^{n-2}\overline{\lambda}^2\tau(a_ta_{t,\lambda}^{j})\tau\big(a_ta_{t,\lambda}^{n-2-j}\big)+\lambda^2\tau(a_t^{\star}a_{t,\lambda}^{j})\tau\big(a_t^{\star}a_{t,\lambda}^{n-2-j} \big)
\\=&\frac{1}{2}\tau\big((a_{t,\lambda}+x)^{-1}(\overline{\lambda}a_t+\lambda a_t^{\star})\big)
-|\lambda|^2\left(\frac{1}{x}\sum_{k=0}^{\infty}\frac{(-1)^{k}}{x^k} \tau\big( h^2a_{t,\lambda}^{k}\big)\right)\left(\frac{1}{x}\sum_{l=0}^{\infty}\frac{(-1)^{l}}{x^l} \tau\big( a_{t,\lambda}^{l}\big)\right)
\\&+\frac{\overline{\lambda}^2}{2}\left(\frac{1}{x}\sum_{l=0}^{\infty}\frac{(-1)^{l}}{x^l} \tau\big(a_t a_{t,\lambda}^{l}\big)\right)^2+\frac{\lambda^2}{2}\left(\frac{1}{x}\sum_{l=0}^{\infty}\frac{(-1)^{l}}{x^l} \tau\big(a_t^{\star} a_{t,\lambda}^{l}\big)\right)^2
\\=&\frac{1}{2}\tau\big((a_{t,\lambda}+x)^{-1}(\overline{\lambda}a_t+\lambda a_t^{\star})\big)-|\lambda|^2\tau\big( h^2(a_{t,\lambda}+x)^{-1}\big)\tau\big((a_{t,\lambda}+x)^{-1}\big)
\\&+\frac{\overline{\lambda}^2}{2}[\tau\big( a_t(a_{t,\lambda}+x)^{-1}\big)]^2 +\frac{\lambda^2}{2}[\tau\big( a_t^{\star}(a_{t,\lambda}+x)^{-1}\big)]^2.
\end{align*}
Since $S$ and the expression displayed in the right-hand side of the last equality are analytic in the right half-plane, the corollary follows. 
\end{proof}
The analyticity of $x \mapsto S(t,\lambda,x)$ is only needed to prove Corollary \ref{partial}. Henceforth, we shall assume $x > 0$ and derive the following PDE for $S$. 
\begin{teo}
The function $S$ satisfies:
\begin{align*}
    \frac{\partial S}{\partial t}=&x|\lambda|^2\left(\frac{\partial S}{\partial x}\right)^2+\frac{\overline{\lambda}^2}{2}\left(\frac{\partial S}{\partial \overline{\lambda}}\right)^2
+\frac{\lambda^2}{2}\left(\frac{\partial S}{\partial \lambda}\right)^2
-\frac{\overline{\lambda}}{2}\frac{\partial S}{\partial \overline{\lambda}}
-\frac{\lambda}{2}\frac{\partial S}{\partial \lambda},
\end{align*}
with the initial value:
\begin{equation*}
    S(0,\lambda,x)=\tau(\log(|h-\lambda|^2+x)).
\end{equation*}
Equivalently, if $\lambda=a+ib$ then this PDE reads:
\begin{align}\label{pde2}
    \frac{\partial S}{\partial t}=&x|\lambda|^2\left(\frac{\partial S}{\partial x}\right)^2-\frac{a}{2}\frac{\partial S}{\partial a}
-\frac{b}{2}\frac{\partial S}{\partial b}+ab\frac{\partial S}{\partial a}
\frac{\partial S}{\partial b}+\frac{a^2-b^2}{4}\left(\big(\frac{\partial S}{\partial a}\big)^2-
\big(\frac{\partial S}{\partial b}\big)^2\right).
\end{align}
\end{teo}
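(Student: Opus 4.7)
My plan is to derive the PDE by algebraically rewriting each trace appearing in Corollary~\ref{partial} in terms of the partial derivatives $\partial_x S$, $\partial_\lambda S$, $\partial_{\overline{\lambda}} S$. Since $S(t,\lambda,x)=\tau[\log(a_{t,\lambda}+x)]$ and the trace commutes with differentiation under the integral for $x > 0$, I start with
\begin{equation*}
\frac{\partial S}{\partial x} = \tau\!\left[(a_{t,\lambda}+x)^{-1}\right].
\end{equation*}
Using Wirtinger derivatives on $a_{t,\lambda}=h^2-\overline{\lambda}a_t-\lambda a_t^{\star}+|\lambda|^2$ (so $\partial_\lambda a_{t,\lambda}=-a_t^{\star}+\overline{\lambda}$ and $\partial_{\overline{\lambda}} a_{t,\lambda}=-a_t+\lambda$), I would read off the identifications
\begin{align*}
\tau\!\left[a_t^{\star}(a_{t,\lambda}+x)^{-1}\right] &= \overline{\lambda}\,\frac{\partial S}{\partial x}-\frac{\partial S}{\partial \lambda},\\
\tau\!\left[a_t(a_{t,\lambda}+x)^{-1}\right] &= \lambda\,\frac{\partial S}{\partial x}-\frac{\partial S}{\partial \overline{\lambda}}.
\end{align*}

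The one trace that is not immediately a derivative of $S$ is $\tau[h^2(a_{t,\lambda}+x)^{-1}]$, and handling it is the main (though elementary) obstacle. The trick is to use the defining identity $h^2=(a_{t,\lambda}+x)+\overline{\lambda}a_t+\lambda a_t^{\star}-|\lambda|^2-x$, which, after dividing by $a_{t,\lambda}+x$ and taking the trace, gives
\begin{equation*}
\tau\!\left[h^2(a_{t,\lambda}+x)^{-1}\right] = 1+|\lambda|^2\frac{\partial S}{\partial x}-x\,\frac{\partial S}{\partial x}-\overline{\lambda}\,\frac{\partial S}{\partial \overline{\lambda}}-\lambda\,\frac{\partial S}{\partial \lambda},
\end{equation*}
where I have used the two identifications above together with $\partial_x S=\tau[(a_{t,\lambda}+x)^{-1}]$.

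From here it is pure bookkeeping: substitute these four expressions into the right-hand side of Corollary~\ref{partial} and expand. The first term becomes $|\lambda|^2\partial_x S-\frac12(\overline{\lambda}\partial_{\overline{\lambda}} S+\lambda\partial_\lambda S)$; the $|\lambda|^2\tau[h^2\cdots]\tau[\cdots]$ term produces, among other things, the key $x|\lambda|^2(\partial_x S)^2$ as well as cross products like $|\lambda|^2\overline{\lambda}\,\partial_x S\,\partial_{\overline{\lambda}} S$; and the two squared traces expand into $\frac{|\lambda|^4}{2}(\partial_x S)^2$, the wanted $\frac{\overline{\lambda}^2}{2}(\partial_{\overline{\lambda}} S)^2+\frac{\lambda^2}{2}(\partial_\lambda S)^2$, and opposite cross terms that cancel with those from the $h^2$-term. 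The $|\lambda|^4(\partial_x S)^2$ pieces also cancel, leaving exactly the stated PDE. For the equivalent formulation with $\lambda=a+ib$, I would just translate via $\partial_\lambda=\tfrac12(\partial_a-i\partial_b)$ and $\partial_{\overline{\lambda}}=\tfrac12(\partial_a+i\partial_b)$, noting that $\lambda\partial_\lambda S+\overline{\lambda}\partial_{\overline{\lambda}} S=aS_a+bS_b$ and $\lambda^2(\partial_\lambda S)^2+\overline{\lambda}^2(\partial_{\overline{\lambda}} S)^2=2\Re[(\lambda\partial_\lambda S)^2]=\frac{a^2-b^2}{2}(S_a^2-S_b^2)+2abS_aS_b$. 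The initial condition is immediate from $a_0=h$.
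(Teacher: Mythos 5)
Your proposal is correct and follows essentially the same route as the paper: all traces in Corollary \ref{partial} are rewritten via the identities \eqref{Id1}--\eqref{Id3} (which you obtain from Wirtinger differentiation of $a_{t,\lambda}$, the content of Brown's Lemma 1.1 cited in the paper), the key step being the identity $\tau[h^2(a_{t,\lambda}+x)^{-1}] = 1+(|\lambda|^2-x)\partial_x S-\lambda\partial_\lambda S-\overline{\lambda}\partial_{\overline{\lambda}}S$, which you derive from the defining relation of $a_{t,\lambda}$ while the paper expands $h^2=(a_t-\lambda+\lambda)^{\star}(a_t-\lambda+\lambda)$ -- the same computation in a different dress. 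The substitution, cancellations, and the passage to real coordinates all check out.
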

\begin{proof}
We appeal to the following identities:
\begin{equation}\label{Id1}
    \frac{\partial S}{\partial x}=\tau\big((a_{t,\lambda}+x)^{-1}\big),
\end{equation}
\begin{equation}\label{Id2}
    \frac{\partial S}{\partial \lambda}=-\tau\big((a_t-\lambda)^{\star}(a_{t,\lambda}+x)^{-1}\big),
\end{equation}
\begin{equation}\label{Id3}
    \frac{\partial S}{\partial \overline{\lambda}}=-\tau\big((a_t-\lambda)(a_{t,\lambda}+x)^{-1}\big),
\end{equation}
which are straightforward consequences of \cite[Lemma 1.1]{Brown}. It follows that:
\begin{equation*}
   \tau\big(a_t(a_{t,\lambda}+x)^{-1}\big) =\lambda \frac{\partial S}{\partial x}-\frac{\partial S}{\partial \overline{\lambda}},\qquad \tau\big(a_t^{\star}(a_{t,\lambda}+x)^{-1}\big) =\overline{\lambda} \frac{\partial S}{\partial x}-\frac{\partial S}{\partial \lambda}.
\end{equation*}
Besides, writing $h^2=a_t^{\star}a_t=(a_t-\lambda+\lambda)^{\star}(a_t-\lambda+\lambda)$, we get
\begin{align*}
 \tau\big( h^2(a_{t,\lambda}+x)^{-1}\big)=& \tau\big( a_{t,\lambda}(a_{t,\lambda}+x)^{-1}\big)+\lambda\tau\big( (a_t-\lambda)^{\star}(a_{t,\lambda}+x)^{-1}\big)
 \\&+\overline{\lambda}\tau\big( (a_t-\lambda)(a_{t,\lambda}+x)^{-1}\big)+|\lambda|^2\tau\big( (a_{t,\lambda}+x)^{-1}\big)  
 \\=&1+(|\lambda|^2-x)\frac{\partial S}{\partial x}-\lambda\frac{\partial S}{\partial \lambda}-\overline{\lambda}\frac{\partial S}{\partial \overline{\lambda}}.
\end{align*}
Together with Corollary \ref{partial}, we end up with:
\begin{align*}
    \frac{\partial S}{\partial t}=&\frac{\overline{\lambda}}{2}[\lambda \frac{\partial S}{\partial x}-\frac{\partial S}{\partial \overline{\lambda}}]+\frac{\lambda}{2}[\overline{\lambda} \frac{\partial S}{\partial x}-\frac{\partial S}{\partial \lambda}]-|\lambda|^2\frac{\partial S}{\partial x}[1+(|\lambda|^2-x)\frac{\partial S}{\partial x}-\lambda\frac{\partial S}{\partial \lambda}-\overline{\lambda}\frac{\partial S}{\partial \overline{\lambda}}]
   \\&+\frac{\overline{\lambda}^2}{2}[\lambda \frac{\partial S}{\partial x}-\frac{\partial S}{\partial \overline{\lambda}}]^2
+\frac{\lambda^2}{2}[\overline{\lambda} \frac{\partial S}{\partial x}-\frac{\partial S}{\partial \lambda}]^2
\\=&x|\lambda|^2\left(\frac{\partial S}{\partial x}\right)^2+\frac{\overline{\lambda}^2}{2}\left(\frac{\partial S}{\partial \overline{\lambda}}\right)^2
+\frac{\lambda^2}{2}\left(\frac{\partial S}{\partial \lambda}\right)^2
-\frac{\overline{\lambda}}{2}\frac{\partial S}{\partial \overline{\lambda}}
-\frac{\lambda}{2}\frac{\partial S}{\partial \lambda},
\end{align*}
as desired.
\end{proof}

\subsection{Hamilton equations}
Since \eqref{pde2} is a nonlinear and first-order PDE, then it is natural to appeal to the method of characteristics. Even more, it turns out that the Hamiltonian formalism is well-suited to our situation as it was in \cite{DHK}, which amounts to make a coupling between space variables $a, b, x,$ and the partial derivatives of $S$ with respect to them (momenta $p_a, p_b, p_x$) such that the right-hand side of \eqref{pde2} (the Hamiltonian) viewed as a function of these six variables is constant along the characteristics.     
More precisely, the Hamiltonian corresponding to the PDE \eqref{pde2} is (up to a sign) given by:
\begin{equation*}
H(a,b,x,p_a,p_b,p_x)=-x(a^2+b^2)p_x^2+\frac{a}{2}p_a+\frac{b}{2}p_b-abp_ap_b-\frac{a^2-b^2}{4}(p_a^2-p_b^2).
\end{equation*}
If we require that $a,b,x,p_a,p_b,p_x,$ evolve along curves, then the Hamilton's equations are given by:
\begin{equation*}
    \frac{da}{du}=\frac{\partial H}{\partial p_a}; \frac{db}{du}=\frac{\partial H}{\partial p_b};
    \frac{dx}{du}=\frac{\partial H}{\partial p_x};
\end{equation*}
\begin{equation}\label{sys}
    \frac{dp_a}{du}=-\frac{\partial H}{\partial a}; \frac{dp_b}{du}=-\frac{\partial H}{\partial b};
    \frac{dp_x}{du}=-\frac{\partial H}{\partial x}.
\end{equation}
and ensure that 
\begin{equation*}
\partial_u[H(a(u), b(u), x(u), p_a(u), p_b(u), p_x(u))] = 0.
\end{equation*}
Besides, the initial conditions of the momenta curves are deduced from \eqref{Id1}, \eqref{Id2} and \eqref{Id3}:
\begin{eqnarray*}
p_a(0) & = \displaystyle \frac{\partial S}{\partial a} (a_0, b_0, x_0) & = -2\tau(q_0(h-a_0)), \\
p_b(0) & =  \displaystyle \frac{\partial S}{\partial b} (a_0, b_0, x_0) & = 2b_0\tau(q_0), \\
p_x(0) &=  \displaystyle \frac{\partial S}{\partial x} (a_0, b_0, x_0) & = \tau(q_0),
\end{eqnarray*}
where we simply write 
\begin{equation*}
a(0)=a_0; \quad b(0)=b_0; \quad x(0)=x_0;
\end{equation*}
 and we set
\begin{equation*}
q_0 =((h-a_0)^2+b_0^2+x_0)^{-1}.
\end{equation*}
Consequently, the value of the Hamiltonian along the characteristic curves is 
\begin{align*}
H_0 :=&-x_0(a_0^2+b_0^2)p_x(0)^2-a_0\tau(q_0(h-a_0))+b_0^2\tau(q_0)+4a_0b_0^2\tau(q_0(h-a_0))\tau(q_0)
\\&-(a_0^2-b_0^2)\left( [\tau(p_0h-a_0q_0)]^2-b_0^2\tau(q_0)^2 \right).
\end{align*}
One also checks by direct computations using \eqref{sys} that the following are also constant of motions: 
\begin{pro}\label{ConsMot}
Along any solution of \eqref{sys}, the following quantities remain constant in time:
\begin{itemize}
\item $K_1:= xp_x+\frac{1}{2}(ap_a+bp_b)$,
\item $K_2:= ap_b-bp_a$ (angular momentum),
\item $xp_x^2$.
\end{itemize}
\end{pro}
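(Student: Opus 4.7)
The plan is to verify each of the three conservation laws by direct computation from the six Hamilton equations. First I would write them out explicitly from the given $H$. The $(x,p_x)$ pair partly decouples:
$$\dot x = \frac{\partial H}{\partial p_x} = -2x(a^2+b^2)p_x, \qquad \dot p_x = -\frac{\partial H}{\partial x} = (a^2+b^2)p_x^2,$$
while the four equations for $(a,b,p_a,p_b)$ depend on $(x,p_x)$ only through the common factor $xp_x^2$ appearing in $\dot p_a$ and $\dot p_b$.

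The easiest quantity is $xp_x^2$: only the two equations above are needed, and
$$\frac{d}{du}(xp_x^2) = \dot x\, p_x^2 + 2xp_x\,\dot p_x = -2x(a^2+b^2)p_x^3 + 2x(a^2+b^2)p_x^3 = 0.$$
As a byproduct I would record the identity $\tfrac{d}{du}(xp_x) = -x(a^2+b^2)p_x^2$, to be reused below.

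For the angular momentum $K_2 = ap_b - bp_a$, I view conservation as a manifestation of the rotational invariance of $H$: in the complex coordinates $\lambda = a+ib$ the Hamiltonian is manifestly invariant under $\lambda \mapsto e^{i\theta}\lambda$ with the canonical lift on the momenta, since every monomial in $H$ can be reorganized as a real part of a product of the form $\lambda^j \bar\lambda^k p_\lambda^m p_{\bar\lambda}^n$ with $j+m = k+n$. Noether's principle then identifies $K_2$ as the associated charge. As a direct check I would expand $\dot K_2 = \dot a\, p_b + a\,\dot p_b - \dot b\, p_a - b\,\dot p_a$; the contributions involving $xp_x^2$, $p_ap_b$, and $p_a^2 - p_b^2$ each arise twice with opposite signs and cancel in pairs.

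For $K_1 = xp_x + \tfrac{1}{2}(ap_a + bp_b)$ the underlying symmetry is the weighted scaling $(a,b)\mapsto e^{s/2}(a,b)$, $(p_a,p_b)\mapsto e^{-s/2}(p_a,p_b)$, $x \mapsto e^s x$, $p_x \mapsto e^{-s}p_x$, under which every monomial of $H$ is homogeneous of degree zero. Concretely, I would compute $\tfrac{d}{du}(ap_a + bp_b)$ and verify that the $p_ap_b$ and $p_a^2 - p_b^2$ cross-terms cancel, leaving exactly $2(a^2+b^2)xp_x^2$. Combining this with the formula $\tfrac{d}{du}(xp_x) = -x(a^2+b^2)p_x^2$ from above yields $\dot K_1 = 0$.

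The main obstacle is bookkeeping rather than conceptual: the $p_ap_b$ and $p_a^2 - p_b^2$ cross-terms must cancel systematically in both $K_1$ and $K_2$, and the principal risk is a sign error among the many similar-looking pieces produced by $\dot p_a$ and $\dot p_b$. The symmetry readings above provide an independent sanity check but do not replace the term-by-term algebra.
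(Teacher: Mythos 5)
Your proof is correct and follows essentially the same route as the paper, which simply asserts that the three conservation laws follow "by direct computations" from the system \eqref{sys}: your verifications $\frac{d}{du}(xp_x^2)=0$, $\frac{d}{du}(xp_x)=-x(a^2+b^2)p_x^2$, $\frac{d}{du}(ap_a+bp_b)=2(a^2+b^2)xp_x^2$ and the pairwise cancellations in $\dot K_2$ all check out against the Hamiltonian $H$. The Noether/symmetry reading (rotation invariance for $K_2$, the weighted scaling for $K_1$) is a sound supplementary sanity check but is not needed beyond the term-by-term algebra.
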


\subsection{Solving the equations}
Let us write the odes in \eqref{sys} more explicitly: 
\begin{align}
    \dot{a}&=-\frac{a^2-b^2}{2}p_a-a(bp_b-\frac{1}{2});\label{ode1}
    \\\dot{b}&=\frac{a^2-b^2}{2}p_b-b(ap_a-\frac{1}{2}); \label{ode2}
    \\\dot{x}&=-2x(a^2+b^2)p_x;\label{ode3}
    \\\dot{p_a}&=\frac{a}{2}(p_a^2-p_b^2)+p_a(bp_b-\frac{1}{2})+2ax_0p_x(0)^2; \label{ode4}
    \\\dot{p_b}&=-\frac{b}{2}(p_a^2-p_b^2)+p_b(ap_a-\frac{1}{2})+2bx_0p_x(0)^2; \label{ode5}
    \\\dot{p_x}&=(a^2+b^2)p_x^2 \label{ode6}.
\end{align}

From \eqref{ode6}, we obtain
\begin{equation}\label{px}
p_x(u)=\frac{p_x(0)}{1-p_x(0)\int_0^u|\lambda(s)|^2ds}, \quad u \geq 0,
\end{equation}
up to the blow-up time, or equivalently 
\begin{equation*}
\int_0^u|\lambda(s)|^2ds= \frac{1}{p_x(0)}  -\frac{1}{p_x(u)}.
\end{equation*}
Besides, since $xp_x^2$ is a constant of motion then 
\begin{equation*}
x(u)=x_0\left(1-p_x(0)\int_0^u|\lambda(s)|^2ds\right)^2.
\end{equation*}

Now, we come to the following key result: 
\begin{pro}\label{odelambda}
Provided that $\lambda$ does not vanish, it satisfies the following differential equation:
\begin{equation*}
x_0p_x^2(0)|\lambda|^2 + \frac{\ddot{\lambda}\lambda - (\dot{\lambda})^2}{\lambda^2} = 0.
\end{equation*}
\end{pro}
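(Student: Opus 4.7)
My plan is to introduce the complex combinations $\lambda = a+ib$ and $p := p_a - ip_b$ and show that the first-order system for $(a,b,p_a,p_b)$ collapses into a much cleaner pair of complex ODEs in $(\lambda,p)$. The structural reason to hope for this is that the pieces of the Hamiltonian $H$ quadratic in momenta are precisely $-\tfrac{a^2-b^2}{4}(p_a^2-p_b^2)-abp_ap_b$, which equals $-\tfrac{1}{4}\,\mathrm{Re}(\lambda^2 p^2)$, so the $(a,b)$-dynamics ought to separate in the $\lambda$ variable.

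Concretely, combining \eqref{ode1} and \eqref{ode2} gives
\begin{equation*}
\dot\lambda=\dot a+i\dot b = -\left(\tfrac{a^2-b^2}{2}+iab\right)(p_a-ip_b)+\tfrac{1}{2}(a+ib)
=-\tfrac{\lambda^2}{2}p+\tfrac{\lambda}{2}=\tfrac{\lambda}{2}\bigl(1-\lambda p\bigr),
\end{equation*}
using the identity $\frac{a^2-b^2}{2}+iab=\frac{\lambda^2}{2}$. Similarly, combining \eqref{ode4} and \eqref{ode5} and using $(p_a-ip_b)^2=(p_a^2-p_b^2)-2ip_ap_b$ together with $b-ia=-i\lambda$, one gets
\begin{equation*}
\dot p=\dot p_a-i\dot p_b=\tfrac{\lambda p^2}{2}-\tfrac{p}{2}+2x_0p_x(0)^2\bar\lambda = \tfrac{p}{2}(\lambda p-1)+2x_0p_x(0)^2\bar\lambda.
\end{equation*}
This is the main algebraic step, and the only real obstacle is keeping track of the signs in the complex regrouping; once the two quadratic terms are recognized as $\lambda^2$ and $p^2$, respectively, everything fits.

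Next, introduce the product $w:=\lambda p$. A short computation shows that the two $\frac{w(w-1)}{2}$-type terms cancel and
\begin{equation*}
\dot w=\dot\lambda\,p+\lambda\dot p = \tfrac{w(1-w)}{2}+\tfrac{w(w-1)}{2}+2x_0p_x(0)^2|\lambda|^2 = 2x_0p_x(0)^2|\lambda|^2.
\end{equation*}
The miraculous cancellation here is exactly the pay-off of having formed $p=p_a-ip_b$: the nonlinear pieces cancel and $w$ picks up only the (real) driving term coming from the conserved quantity $x_0p_x(0)^2$.

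Finally, from $\dot\lambda=\frac{\lambda}{2}(1-w)$ we have $\dot\lambda/\lambda=(1-w)/2$, so differentiating once more and using the quotient rule $(\dot\lambda/\lambda)^{\cdot}=(\ddot\lambda\lambda-\dot\lambda^2)/\lambda^2$,
\begin{equation*}
\frac{\ddot\lambda\,\lambda-\dot\lambda^2}{\lambda^2}=\frac{d}{du}\!\left(\frac{1-w}{2}\right)=-\tfrac{\dot w}{2}=-x_0p_x(0)^2|\lambda|^2,
\end{equation*}
which is exactly the claimed ODE. The assumption $\lambda\neq 0$ is used only to divide by $\lambda$ when forming $\dot\lambda/\lambda$.
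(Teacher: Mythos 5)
Your proof is correct and follows essentially the same route as the paper: you complexify via $\lambda=a+ib$, $p=p_a-ip_b$, obtain the same pair of coupled complex ODEs (the paper writes them after a time rescaling $\tilde{\lambda}(u)=\lambda(2u)$, which only removes factors of $2$), and then eliminate the momentum. Your intermediate variable $w=\lambda p$, whose derivative collapses to $2x_0p_x(0)^2|\lambda|^2$, merely streamlines the simplification that the paper performs by differentiating $p_{\tilde{\lambda}}=(\tilde{\lambda}-\dot{\tilde{\lambda}})/\tilde{\lambda}^2$ directly.
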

\begin{proof}
Let $\tilde{\lambda}(\cdot):= \lambda(2\cdot)$, then the sum of \eqref{ode1} and \eqref{ode2} gives: 
\begin{equation}\label{E1}
\dot{\tilde{\lambda}} = -p_{\tilde{\lambda}}\tilde{\lambda}^2 + \tilde{\lambda} \quad \Leftrightarrow \quad p_{\lambda} = \frac{\tilde{\lambda} - \dot{\tilde{\lambda}}}{\tilde{\lambda}^2}.
\end{equation}
where we set 
\begin{equation*}
p_{\tilde{\lambda}}(u) := (p_a-ip_b)(2u),
\end{equation*}
while subtracting \eqref{ode5} from \eqref{ode4} gives: 
\begin{equation}\label{E2}
\dot{p}_{\tilde{\lambda}} = (p_{\tilde{\lambda}})^2 \tilde{\lambda} - p_{\tilde{\lambda}} + 4\overline{\tilde{\lambda}}x_0p_x^2(0).
\end{equation}
Differentiating \eqref{E1} and comparing the resulting equation with \eqref{E2}, we further get: 
\begin{equation*}
-\frac{\dot{\tilde{\lambda}}}{\tilde{\lambda}^2} - \frac{\ddot{\tilde{\lambda}}\tilde{\lambda}^2 - 2 \tilde{\lambda}\dot{\tilde{\lambda}}^2}{\tilde{\lambda}^4} = \tilde{\lambda} \frac{\tilde{\lambda}^2 + \dot{\tilde{\lambda}}^2 - 2\tilde{\lambda}\dot{\tilde{\lambda}}}{\tilde{\lambda}^4} + \frac{\dot{\tilde{\lambda}} - \tilde{\lambda}}{\tilde{\lambda}^2} + 4\overline{\tilde{\lambda}}x_0p_x^2(0).
\end{equation*}
Multiplying both sides of the last equation by $\tilde{\lambda}^4$, we equivalently get: 
\begin{equation*}
-\dot{\tilde{\lambda}} \tilde{\lambda}^2 - \ddot{\tilde{\lambda}}\tilde{\lambda}^2 + 2 \tilde{\lambda}\dot{\tilde{\lambda}}^2 = \tilde{\lambda} (\tilde{\lambda}^2 + \dot{\tilde{\lambda}}^2 - 2\tilde{\lambda}\dot{\tilde{\lambda}}) 
+ (\dot{\tilde{\lambda}} - \tilde{\lambda})\tilde{\lambda}^2+ 4|\tilde{\lambda}|^2 \tilde{\lambda}^3x_0p_x^2(0).
\end{equation*}
which reduces after some simplifications to:
\begin{equation*}
4x_0p_x^2(0)|\tilde{\lambda}(u)|^2 + \frac{\ddot{\tilde{\lambda}}\tilde{\lambda} - (\dot{\tilde{\lambda}})^2}{\tilde{\lambda}^2} = 0.
\end{equation*}
Remembering the definition $\tilde{\lambda}(u) = \lambda(2u)$, we are done. 
\end{proof}
Writing 
\begin{equation*}
 \frac{\ddot{\lambda}\lambda - (\dot{\lambda})^2}{\lambda^2} = \ddot{(\ln(\lambda))}
 \end{equation*}
 where $\ln$ is any determination of the logarithm which coincides with the real logarithm on the positive half-line, and setting $\lambda(u) = r(u)e^{i\theta(u)}$, we readily get: 
 \begin{eqnarray*}
 \ddot{\theta} & = & 0 \\ 
 x_0p_x^2(0) r^2 + \ddot{(\ln(r))} & = & 0.
 \end{eqnarray*}
Setting further $r = e^v$, it follows that: 
 \begin{eqnarray*}
 \theta(u) & = & \dot{\theta}(0)u+ \theta_0, \quad \theta_0 := \theta(0),  \\ 
 \ddot{v} + x_0p_x^2(0)e^{2v} & = & 0.
 \end{eqnarray*}
In particular, $\dot{v}$ is decreasing and as such, it is either non positive on the whole interval where it is defined or there exists a time $u_1$ after which it remains non positive. Moreover, 
\begin{equation*}
(\dot{v})^2(u) =  (\dot{v}(0))^2 + x_0p_x^2(0)(e^{2v(0)} - e^{2v(u)}),
\end{equation*}
and using \eqref{ode1} and \eqref{ode2}, we have
\begin{align*}
\dot{v}(0)=\frac{\dot{r}}{r} (0)=\frac{a_0\dot{a}(0)+b_0\dot{b}(0)}{a_0^2+b_0^2}=\frac{1}{2}[1-a_0p_a(0)-b_0p_b(0)].
\end{align*}
Note also that 
\begin{equation*}
(\dot{v}(0))^2 + x_0p_x^2(0)(e^{2v(0)} - e^{2v(t)})\geq 0 \Leftrightarrow 0\leq |\lambda(t)| \leq \sqrt{ |\lambda_0|^2+\frac{\dot{v}(0)^2}{x_0p_x(0)^2}}
\end{equation*}
where $\lambda_0 = a_0 + ib_0$, and that $u_1$ satisfies
\begin{equation*}
 |\lambda(u_1)| = \sqrt{ |\lambda_0|^2+\frac{\dot{v}(0)^2}{x_0p_x(0)^2}}.
\end{equation*}

We shall distinguish two cases: 
\begin{itemize}
\item $\dot{v}$ is non positive ($\dot{v}(0) \leq 0$):  $v$ is decreasing and $|\lambda(t)|\leq |\lambda_0|$ on the whole time-interval where it is defined. Precisely, we have 
\begin{equation*}
\dot{v}(u) = -\sqrt{(\dot{v}(0))^2 +x_0p_x^2(0)(e^{2v(0)} - e^{2v(u)})}.
\end{equation*}
Solving this equation leads to the following result: 
\begin{pro}
Assume $\dot{v}(0) \leq 0$. Then, for any $u \geq 0$, 
\begin{equation*}
|\lambda(u)| =  \frac{2|\lambda_0|e^{\sqrt{C_1}u}(1+\sqrt{1-C_2|\lambda_0|^2})}{C_2|\lambda_0|^2 +e^{2\sqrt{C_1}u}(1+\sqrt{1-C_2|\lambda_0|^2})^2},
\end{equation*}
where

\begin{equation}\label{Constantes}
C_1:= (\dot{v}(0))^2 +x_0p_x^2(0)e^{2v(0)} > 0, \quad C_2 := \frac{x_0p_x^2(0)}{C_1} \in (0,1). 
\end{equation}
\end{pro}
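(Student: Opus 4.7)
The plan is to reduce the second-order nonlinear ODE for $v$ to a linear ODE through a reciprocal substitution, and then read off $|\lambda(u)|$ by inverting the resulting elementary solution. Recall that setting $\alpha := x_0p_x^2(0) > 0$, the reduced equation is
\begin{equation*}
\ddot{v} + \alpha e^{2v} = 0, \qquad (\dot v)^2 = C_1 - \alpha e^{2v},
\end{equation*}
with $C_1 = (\dot v(0))^2 + \alpha e^{2v(0)}$ and $C_2 = \alpha/C_1$, and that under the assumption $\dot v(0) \leq 0$ one has $\dot v \leq 0$ throughout the existence interval.

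The first step I would take is to introduce $y := e^{-v} = 1/|\lambda|$. Differentiating and using the first integral, I obtain
\begin{equation*}
\dot y = -\dot v\, y, \qquad \dot y^2 = \dot v^2\,y^2 = \bigl(C_1 - \alpha/y^2\bigr)y^2 = C_1 y^2 - \alpha,
\end{equation*}
and a further differentiation cancels the constant, yielding the \emph{linear} equation $\ddot y = C_1 y$. This is the key computation: the substitution turns the nonlinear problem into a simple constant-coefficient equation whose general solution is $y(u) = A e^{\sqrt{C_1}\,u} + B e^{-\sqrt{C_1}\,u}$.

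Next, I would pin down the constants from the initial data. From $v(0) = \ln|\lambda_0|$ we get $y(0) = 1/|\lambda_0|$, and the sign condition $\dot v(0) \leq 0$ combined with $\dot y(0) = -\dot v(0) y(0)$ forces $\dot y(0) = \sqrt{C_1 y(0)^2 - \alpha} \geq 0$, which by the definition of $C_2$ equals $\sqrt{C_1(1-C_2|\lambda_0|^2)}/|\lambda_0|$. Solving the linear system
\begin{equation*}
A + B = \frac{1}{|\lambda_0|}, \qquad A - B = \frac{\sqrt{1-C_2|\lambda_0|^2}}{|\lambda_0|},
\end{equation*}
gives $A = (1+\sqrt{1-C_2|\lambda_0|^2})/(2|\lambda_0|)$ and a symmetric formula for $B$.

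The last step is a cosmetic simplification that produces the single-fraction form announced in the statement. Observing that $AB = (1 - (1-C_2|\lambda_0|^2))/(4|\lambda_0|^2) = C_2/4$, I rewrite $B = C_2|\lambda_0|/\bigl(2(1+\sqrt{1-C_2|\lambda_0|^2})\bigr)$, put $y(u)$ on the common denominator $2|\lambda_0|(1+\sqrt{1-C_2|\lambda_0|^2})$, and take reciprocals to obtain $|\lambda(u)| = 1/y(u)$ in exactly the claimed form. I expect no genuine obstacle here; the only point demanding care is the sign choice for $\dot y(0)$, which is where the hypothesis $\dot v(0) \leq 0$ enters in an essential way and which distinguishes this case from the complementary one to be treated afterwards.
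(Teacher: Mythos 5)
Your proposal is correct, but it follows a genuinely different route from the paper. The paper works with $v=\ln|\lambda|$ directly: using that $\dot v$ stays nonpositive on the whole interval, it writes $\dot v=-\sqrt{(\dot v(0))^2+x_0p_x^2(0)(e^{2v(0)}-e^{2v})}$, separates variables, computes the primitive $F$ explicitly (via the substitution $y\mapsto\arcsin[\sqrt{C_2}e^{y}]$ and a half-angle identity), and then inverts $F$ to read off $|\lambda(u)|$. You instead linearize: the substitution $y=e^{-v}=1/|\lambda|$ together with the energy identity $\dot v^2+x_0p_x^2(0)e^{2v}=C_1$ gives $\ddot y=C_1y$, and the hypothesis $\dot v(0)\le 0$ enters only through the sign of $\dot y(0)$, after which the formula follows from elementary algebra on $A$ and $B$ (your computations of $A$, $B$, $AB=C_2/4$ and the final reciprocal all check out). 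This buys two things: you avoid both the explicit antiderivative and the global monotonicity of $v$ (uniqueness for the linear ODE with the given initial data replaces them), and the same computation treats the complementary case $\dot v(0)>0$ uniformly, recovering in one stroke the paper's two subsequent propositions without splitting at the turning time $u_1$. One small point to tighten: obtaining $\ddot y=C_1y$ by differentiating $\dot y^2=C_1y^2-x_0p_x^2(0)$ formally requires dividing by $\dot y$, which vanishes when $\dot v(0)=0$; it is cleaner to compute directly
\begin{equation*}
\ddot y=-\ddot v\,y-\dot v\,\dot y=\bigl(x_0p_x^2(0)e^{2v}+\dot v^2\bigr)y=C_1y,
\end{equation*}
which uses only the original ODE for $v$ and the energy identity and involves no division.
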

\begin{proof}
We need to compute the primitive: 
\begin{align*}
F(g) : & =  \int^g \frac{dy}{\sqrt{(\dot{v}(0))^2 +x_0p_x^2(0)(e^{2v(0)} - e^{2y})}}, \quad g < v(0) = \ln(r_0), 
\\& =  \int^{e^g} \frac{dy}{\sqrt{C_1}y\sqrt{1-C_2y^2}}
\\& = \int^{\arcsin[\sqrt{C_2}e^g]} \frac{dy}{\sqrt{C_1}\sin(y)}
\\& =  \frac{1}{\sqrt{C_1}}\ln\left(\tan\left[\frac{1}{2}\arcsin[\sqrt{C_2}e^g\right]\right)
\\& =  \frac{1}{\sqrt{C_1}} \ln \left[\frac{\sqrt{C_2}e^g}{1+\sqrt{1-C_2e^{2g}}}\right],
\end{align*}
where the last equality follows from the trigonometric identity: 
\begin{equation*}
\tan\left(\frac{u}{2}\right) = \frac{\sin(u)}{1+\cos(u)}. 
\end{equation*}
As a result, 
\begin{equation*}
F(v(u)) = F(v(0)) - u \geq 0. 
\end{equation*}
But $F$ is invertible and its inverse reads
\begin{equation*}
F^{-1}(g) = \ln\frac{2e^{-\sqrt{C_1}g}}{\sqrt{C_2}(1+e^{-2\sqrt{C_1}g})}.
\end{equation*}
Hence  
\begin{equation*}
v(u) = F^{-1}[F(v(0))- u] = \ln(r(u)),
\end{equation*}
or equivalently 
\begin{equation*}
|\lambda(u)|  = \frac{2e^{\sqrt{C_1}(u - F(v_0))}}{\sqrt{C_2}(1+e^{2\sqrt{C_1}(u-F(v_0))})}.
\end{equation*}
Finally,  
\begin{equation*}
e^{-\sqrt{C_1}F(v_0)} = \frac{1+\sqrt{1-C_2e^{2v_0}}}{\sqrt{C_2}e^{v_0}}, \qquad e^{v_0} = |\lambda_0|, 
\end{equation*}
whence 
\begin{align*}
|\lambda(u)| &= \frac{2e^{\sqrt{C_1}u+v_0}(1+\sqrt{1-C_2e^{2v_0}})}{C_2e^{2v_0} +e^{2\sqrt{C_1}u}(1+\sqrt{1-C_2e^{2v_0}})^2} 
\\& = \frac{2|\lambda_0|e^{\sqrt{C_1}u}(1+\sqrt{1-C_2|\lambda_0|^2})}{C_2|\lambda_0|^2 +e^{2\sqrt{C_1}u}(1+\sqrt{1-C_2|\lambda_0|^2})^2}. 
\end{align*}
The proposition is proved. 
\end{proof}
\item $\dot{v}$ is positive ($\dot{v}(0) > 0$): $v$ is increasing on $(0,u_1)$ and 
\begin{equation*}
|\lambda_0|\leq |\lambda(u)| \leq \sqrt{ |\lambda_0|^2+\frac{\dot{r}(0)^2}{|\lambda_0|^2x_0p_x(0)^2}}
\end{equation*}
 Moreover, we have on that interval: 
\begin{equation*}
\dot{v}(u) = \sqrt{(\dot{v}(0))^2 +x_0p_x^2(0)(e^{2v(0)} - e^{2v(u)})}.
\end{equation*}
Similar computations as above yield:
\begin{pro}
Assume $\dot{v}(0) > 0$. Then, for any $u \in (0,u_1)$, 
\begin{equation*}
|\lambda(u)| =
 \frac{2|\lambda_0|e^{-\sqrt{C_1}u}(1+\sqrt{1-C_2|\lambda_0|^2})}{C_2|\lambda_0|^2 +e^{-2\sqrt{C_1}u}(1+\sqrt{1-C_2|\lambda_0|^2})^2}.
\end{equation*}
\end{pro}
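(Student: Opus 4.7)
The argument parallels that of the preceding proposition; the only substantial change is the sign of $\dot v$. First, I would note that $\ddot v = -x_0 p_x^2(0)\,e^{2v} < 0$, so $\dot v$ is strictly decreasing in $u$. Starting from $\dot v(0) > 0$, it must therefore remain strictly positive on $(0,u_1)$ and reach $0$ precisely at $u = u_1$, where $|\lambda(u_1)|^2 = |\lambda_0|^2 + \dot v(0)^2/(x_0 p_x(0)^2) = 1/C_2$. Consequently, on this interval the appropriate branch of the first integral is the positive one:
\begin{equation*}
\dot v(u) = +\sqrt{(\dot v(0))^2 + x_0 p_x^2(0)\bigl(e^{2v(0)} - e^{2v(u)}\bigr)}.
\end{equation*}

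Next, I would separate variables and integrate over $[0,u]$ with $u \in (0,u_1)$, reusing the very same primitive
\begin{equation*}
F(g) = \frac{1}{\sqrt{C_1}}\ln\!\left[\frac{\sqrt{C_2}\,e^g}{1+\sqrt{1-C_2 e^{2g}}}\right]
\end{equation*}
introduced in the previous proof. Because the sign of $\dot v$ is now positive, the resulting integral equation reads
\begin{equation*}
F(v(u)) - F(v_0) = u,
\end{equation*}
in place of $F(v_0) - F(v(u)) = u$. Equivalently, $v(u) = F^{-1}\bigl(F(v_0) + u\bigr)$, and the argument $F(v_0) + u$ indeed stays in the range of $F$ on $(0,u_1)$ since $F(v(u_1)) = 0$ by the definition of $u_1$.

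Finally, I would substitute the explicit expression of $F^{-1}$ together with the identity $e^{-\sqrt{C_1} F(v_0)} = (1 + \sqrt{1 - C_2|\lambda_0|^2})/(\sqrt{C_2}\,|\lambda_0|)$, both of which were already established in the preceding proof, and then exponentiate to recover $|\lambda(u)| = e^{v(u)}$. Clearing denominators yields precisely the formula in the statement; the sign flip in the integral equation simply changes $e^{\sqrt{C_1}u}$ into $e^{-\sqrt{C_1}u}$ in both numerator and denominator. I do not expect any genuine obstacle here: the only subtle point is to keep track of the domain of $F$, which is handled by the monotonicity of $v$ and the very definition of $u_1$.
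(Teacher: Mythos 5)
Your proposal is correct and is essentially the paper's argument: the paper itself disposes of this case by noting that ``similar computations as above yield'' the formula, and your write-up simply carries out those computations, correctly taking the positive branch $\dot v=+\sqrt{(\dot v(0))^2+x_0p_x^2(0)(e^{2v(0)}-e^{2v(u)})}$ on $(0,u_1)$, reusing the primitive $F$ and the identity $e^{-\sqrt{C_1}F(v_0)}=(1+\sqrt{1-C_2|\lambda_0|^2})/(\sqrt{C_2}|\lambda_0|)$, so that $F(v(u))=F(v_0)+u$ replaces $F(v(u))=F(v_0)-u$ and the sign flip $e^{\sqrt{C_1}u}\mapsto e^{-\sqrt{C_1}u}$ gives the stated formula. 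Your handling of the domain of $F$ via $F(v(u_1))=0$ (equivalently $C_2e^{2v(u_1)}=1$) is also consistent with the paper's definition of $u_1$.
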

For $u \geq u_1$, 
\begin{equation*}
\dot{v} = -\sqrt{x_0p_x^2(0)(e^{2v(u_1)} - e^{2v(u)})},
\end{equation*}
and we are led to:
\begin{align*}
 \int^g \frac{dy}{\sqrt{x_0p_x^2(0)(e^{2v(u_1)} - e^{2y})}}, \quad g < v(u_1).
\end{align*}
Thus, on this interval, we get: 
\begin{pro}
Assume $\dot{v}(0) > 0$. Then, for any $u \geq u_1$, 
\begin{equation*}
|\lambda(u)| =  \frac{2|\lambda(u_1)|e^{\sqrt{C_3}(u-u_1)}}{1+e^{2\sqrt{C_3}(u-u_1)}},
\end{equation*}
where
\begin{equation}\label{Constantes2}
C_3:= x_0p_x^2(0)e^{2v(u_1)} = x_0p_x^2(0)|\lambda(u_1)|^2 = x_0p_x^2(0)|\lambda_0|^2 + \dot{v}(0)^2 = C_1. 
\end{equation}
\end{pro}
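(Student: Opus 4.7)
The plan is to recognize that this proposition is morally the same ODE problem as the previous one (for $\dot v(0)\leq 0$), but shifted to start at the turning time $u_1$ with zero initial velocity. Since $\dot v$ is decreasing (because $\ddot v = -x_0 p_x^2(0)e^{2v}\leq 0$) and, by definition of $u_1$, vanishes at $u_1$, we have $\dot v(u)\leq 0$ for all $u\geq u_1$. Consequently, on this interval the energy identity reduces to
\begin{equation*}
\dot v(u) = -\sqrt{\,x_0 p_x^2(0)\bigl(e^{2v(u_1)}-e^{2v(u)}\bigr)\,},
\end{equation*}
which is precisely the ODE handled in the proof of the previous proposition, with the role of the initial data $(v(0),\dot v(0))$ played by $(v(u_1),0)$ and the constant $C_1$ replaced by $C_3 := x_0 p_x^2(0)e^{2v(u_1)}$.

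Concretely, I would redo the primitive computation $F(g)=\int^g dy/\sqrt{C_3-x_0p_x^2(0)e^{2y}}$, which after the substitutions $y\mapsto e^y\mapsto \arcsin(\sqrt{C_2'}e^y)$ with $C_2':=x_0p_x^2(0)/C_3=e^{-2v(u_1)}$ yields the same closed form as before. Since now $C_2'|\lambda(u_1)|^2 = e^{-2v(u_1)}|\lambda(u_1)|^2 = 1$, the square root $\sqrt{1-C_2'|\lambda(u_1)|^2}$ degenerates to zero, so the general expression from the previous proposition collapses to
\begin{equation*}
|\lambda(u)| = \frac{2|\lambda(u_1)|\,e^{\sqrt{C_3}(u-u_1)}}{1+e^{2\sqrt{C_3}(u-u_1)}},
\end{equation*}
exactly as claimed. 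Alternatively, one can simply integrate $dv/\sqrt{e^{2v(u_1)}-e^{2v}} = -\sqrt{x_0p_x^2(0)}\,du$ directly using the substitution $e^{v-v(u_1)}=\operatorname{sech}(w)$, which produces the $\operatorname{sech}$-type profile displayed above in one line.

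It remains to justify the three equalities defining $C_3$. The first, $C_3 = x_0 p_x^2(0)|\lambda(u_1)|^2$, is immediate from $e^{2v(u_1)} = |\lambda(u_1)|^2$. The third, $C_3 = C_1$, follows by evaluating the conserved energy relation $(\dot v(u))^2 = (\dot v(0))^2 + x_0 p_x^2(0)(e^{2v(0)}-e^{2v(u)})$ at $u=u_1$ and using $\dot v(u_1)=0$; this gives $x_0p_x^2(0)e^{2v(u_1)} = (\dot v(0))^2 + x_0p_x^2(0)e^{2v(0)} = C_1$, from which the middle identity $C_3 = x_0p_x^2(0)|\lambda_0|^2 + \dot v(0)^2$ also follows. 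The only mildly delicate point is verifying that $\dot v$ indeed remains non-positive beyond $u_1$ so that the correct branch of the square root is taken; this is automatic from the monotonicity of $\dot v$ noted at the start. No other obstacles arise, and the solution is manifestly defined for all $u\geq u_1$ since $|\lambda(u)|$ decays to zero without the denominator vanishing.
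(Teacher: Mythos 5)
Your proposal is correct and follows essentially the same route as the paper: for $u\geq u_1$ one integrates $\dot v=-\sqrt{x_0p_x^2(0)(e^{2v(u_1)}-e^{2v})}$ by the same primitive computation as in the preceding proposition, now with initial data $(v(u_1),0)$, and the identification $C_3=C_1$ comes from evaluating the energy identity at $u_1$ where $\dot v$ vanishes. The degeneration $\sqrt{1-C_2'|\lambda(u_1)|^2}=0$ that collapses the general formula is exactly what produces the displayed expression, so no further comment is needed.
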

\end{itemize}
\begin{rem}\label{x0negative}
Note that $p_x(0) > 0$ by the faithfulness of $\tau$. Although we are interested in the case $x_0>0$, the following two cases play a key role in the proof of the main result of the paper. 
\begin{itemize}
\item If  we let $x_0p_x^2(0) \rightarrow 0^+$, then $u_1$ tends to infinity locally uniformly in $\lambda_0$ and the first two expressions of $|\lambda(u)|$ reduce to:  
\begin{equation*}
|\lambda(u)| = e^{\dot{v}(0)u+v_0} = |\lambda_0|e^{\dot{v}(0)u}, 
\end{equation*}
giving the solution to the equation $\ddot{v} = 0$.
\item If $x_0$ is negative so that $p_x(0)>0$, we easily see from \eqref{px} that $p_x$ remains positive until it blows up and therefore,  using similar computations, we show that the system \eqref{sys} admits a solution up to the blow-up time.
\end{itemize}
\end{rem}

As to the angular part of the curve $u \mapsto \lambda(u)$, it admits the following expression: 
\begin{pro}\label{argument}
As long as the solutions of \eqref{sys} exist, we have: 
\begin{equation*}
\theta(u) = \theta(0)+ K_2u/2,
\end{equation*}
where we recall $K_2 = a_0p_b(0) - b_0p_a(0)$ is the angular momentum.
\end{pro}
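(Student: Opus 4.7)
The plan is to compute $\dot{\theta}$ directly from the Hamilton equations and recognize the result as the angular momentum $K_2$, which is known to be conserved. Earlier in the subsection we already derived $\ddot{\theta}=0$ from Proposition \ref{odelambda} by writing $\lambda=re^{i\theta}$ and separating real and imaginary parts of $\ddot{(\ln\lambda)}=-x_0p_x^2(0)|\lambda|^2$. Hence $\theta$ is affine in $u$, so it suffices to identify the slope $\dot\theta(0)$ with $K_2/2$; but in fact the calculation will show that $\dot\theta(u)=K_2/2$ identically in $u$, which is coherent with Proposition \ref{ConsMot}.

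The computation I would carry out is the following. Writing $\lambda=a+ib$, one has
\begin{equation*}
\dot{\theta} = \frac{a\dot{b}-b\dot{a}}{a^2+b^2}.
\end{equation*}
Plugging in \eqref{ode1} and \eqref{ode2}, the terms $\pm ab/2$ coming from the drift part cancel, and after collecting the coefficients of $p_a$ and $p_b$ one finds
\begin{equation*}
a\dot{b}-b\dot{a} = \frac{p_b}{2}\bigl[a(a^2-b^2)+2ab^2\bigr] - \frac{p_a}{2}\bigl[2a^2b-b(a^2-b^2)\bigr] = \frac{a^2+b^2}{2}(ap_b-bp_a),
\end{equation*}
so that $\dot{\theta} = (ap_b-bp_a)/2$. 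By Proposition \ref{ConsMot}, the quantity $ap_b-bp_a=K_2$ is a constant of motion equal to its initial value $a_0p_b(0)-b_0p_a(0)$.

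Integrating $\dot\theta=K_2/2$ yields the claimed formula $\theta(u)=\theta(0)+K_2 u/2$, valid as long as the solution of \eqref{sys} exists. There is no real obstacle here; the only mildly delicate step is the algebraic simplification of $a\dot b-b\dot a$, where one must check that the nonlinear terms involving $(a^2-b^2)p_a p_b$ which might be expected do not appear because the drift part of \eqref{ode1}--\eqref{ode2} is linear in the momenta, and that the $\pm ab/2$ drift contributions cancel.
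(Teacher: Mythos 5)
Your proposal is correct and follows essentially the same route as the paper: compute $\dot{\theta}=(a\dot{b}-b\dot{a})/(a^2+b^2)$ from \eqref{ode1}--\eqref{ode2}, simplify to $(ap_b-bp_a)/2$, and conclude via the conservation of the angular momentum $K_2$ from Proposition \ref{ConsMot}. The algebraic simplification you flag checks out, so nothing is missing.
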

\begin{proof}
Since $\theta$ is a linear map, then
\begin{align*}
\dot{\theta}(0) = \dot{\theta}(u) & = \frac{d}{du}\arctan\left(\frac{b}{a}\right)(u) 
\\& = \frac{\dot{b}a-\dot{a}b}{a^2+b^2}(u)
\\&=\frac{[(a^2-b^2)(ap_b+bp_a) - 2ab(ap_a - bp_b)] }{2(a^2+b^2)}(u)
\\&=\frac{ap_b - bp_a}{2}(u).
\end{align*}
Since $ap_b - bp_a$ is a constant of motion, the proposition follows. 
\end{proof}

\subsection{Proof of Theorem \ref{expression}}
In this paragraph, we proceed to the  proof of Theorem \ref{expression} which provides an expression of $S$ along characteristic curves. 
\begin{proof}[Proof of Theorem \ref{expression}]
Set $J(u) :=(\lambda(u),x(u)) = (a(u), b(u), x(u))$ and $P(u):=(p_a(u),p_b(u),p_x(u))$. Then, we compute the inner product:
\begin{align*}
    P \cdot \frac{dJ}{du}&=P \cdot \nabla_PH =2H-\frac{a}{2}p_a-\frac{b}{2}p_b =2H_0-\frac{a}{2}p_a-\frac{b}{2}p_b.
\end{align*}
whence (see e.g. \cite{DHK}, Proposition 6.3),
\begin{align*}
S(u,J(u)) & =S(0,J_0)+H_0u-\frac{1}{2}\int_0^u[a(s)p_a(s)+b(s)p_b(s)]ds
\\& = \tau(\log(|h-\lambda_0|^2+x_0)) + H_0 u -\frac{1}{2}\int_0^u[a(s)p_a(s)+b(s)p_b(s)]ds.
\end{align*}
Moreover, using Proposition \ref{ConsMot} together with 
\begin{equation*}
x(s)p_x(s) = x_0p_x(0) \left(1-p_x(0)\int_0^u|\lambda(s)|^2ds\right),
\end{equation*}
it follows that:
\begin{multline*}
 \frac{1}{2}\int_0^ua(s)p_a(s)+b(s)p_b(s)ds =\left(x_0p_x(0)+\frac{1}{2}a_0p_a(0)+\frac{1}{2}b_0p_b(0)\right)u\\ - \int_0^u x(s)p_x(s)ds 
 = \left(x_0p_x(0)+\frac{1}{2}a_0p_a(0)+\frac{1}{2}b_0p_b(0)\right)u - x_0p_x(0)u
  \\ +x_0p_x^2(0)\int_0^u\int_0^s|\lambda(y)|^2dy =\left(\frac{1}{2}a_0p_a(0)+\frac{1}{2}b_0p_b(0)\right)u +x_0p_x^2(0)\int_0^u\int_0^s|\lambda(y)|^2dy.
\end{multline*}
Finally, recall that $|\lambda|$ satisfies
\begin{align*}
 x_0p_x^2(0) |\lambda|^2 + \ddot{(\ln(|\lambda|))}  =  0,
\end{align*}
which implies that
\begin{align*}
x_0p_x^2(0)\int_0^u\int_0^s|\lambda(y)|^2dy &=-\ln|\lambda(u)|+\ln|\lambda_0| + \dot{v}(0) u
\\& = -\ln|\lambda(u)|+\ln|\lambda_0| + \frac{1}{2}\left[1- a_0p_a(0)-b_0p_b(0)\right]u.
\end{align*}
The sought expression follows then after straightforward computations.
\end{proof}

\subsection{Blow-up time} 
We close this section with the explicit formula for the time 
\begin{equation*}
t_* := t_*(\lambda_0, x_0)
\end{equation*} 
defined by: 
\begin{equation*}
\int_0^{t_*}|\lambda(s)|^2ds=\frac{1}{p_x(0)} =\frac{1}{\tau(q_0)}.
\end{equation*}
which is the first time when $p_x$ blows-up or equivalently the curve $t \mapsto x(t)$ attains zero. In order to unify both cases corresponding to $\dot{v}(0) > 0$ and $\dot{v}(0) \leq 0$, we shall use the left-continuous sign function:  
\begin{equation*}
\textrm{sgn}(u)=\begin{cases}
\displaystyle 1; \quad u >  0\\
-1; \quad u \leq 0
\end{cases},
\end{equation*}
and assume that $x_0p_x^2(0)$ is small enough so that $t_1 \geq t_*$. 
\begin{pro}
The blow-up time $t_*$ is given by
\begin{multline*}
\frac{2|\lambda_0|^2}{C_2|\lambda_0|^2 +e^{-2\textrm{sgn}(\dot{v}(0))\sqrt{C_1}t_*}(1+\sqrt{1-C_2|\lambda_0|^2})^2} = \\ \frac{2|\lambda_0|^2}{C_2|\lambda_0|^2 +(1+\sqrt{1-C_2|\lambda_0|^2})^2} + \frac{\sqrt{C_1}}{\tau(q_0)}\textrm{sgn}(\dot{v}(0)).
\end{multline*}
In particular, if $x_0p_x^2(0) \rightarrow 0^+$ then 
\begin{equation*}
t_* = \frac{1}{2\dot{v}(0)}\ln\left( 1+\frac{2\dot{v}(0)}{|\lambda_0|^2\tau(q_0)} \right).
\end{equation*}
\end{pro}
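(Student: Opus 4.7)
The plan is to compute the integral $\int_0^{t_*}|\lambda(s)|^2\,ds$ explicitly from the formulas for $|\lambda|$ established in the preceding propositions, equate it to $1/p_x(0)=1/\tau(q_0)$, and solve for $t_*$. The first step is to unify the two regimes $\dot v(0)\leq 0$ and $\dot v(0)>0$ (the latter under the standing assumption $t_1\geq t_*$, which guarantees one stays on $(0,u_1)$). Setting $A:=1+\sqrt{1-C_2|\lambda_0|^2}$ and $\phi:=\textrm{sgn}(\dot v(0))\sqrt{C_1}\,u$, both explicit formulas collapse into
\begin{equation*}
|\lambda(u)|^2=\frac{4|\lambda_0|^2\,A^2\,e^{-2\phi}}{\bigl(C_2|\lambda_0|^2+A^2e^{-2\phi}\bigr)^2},
\end{equation*}
thanks precisely to the left-continuous sign convention adopted in the statement.

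The second step is the change of variable $y:=A^2e^{-2\phi}$. Since $dy=-2y\,d\phi$ and $du=\textrm{sgn}(\dot v(0))\,d\phi/\sqrt{C_1}$, one obtains
\begin{equation*}
|\lambda(u)|^2\,du=-\frac{2|\lambda_0|^2\,\textrm{sgn}(\dot v(0))}{\sqrt{C_1}}\,\frac{dy}{\bigl(C_2|\lambda_0|^2+y\bigr)^2},
\end{equation*}
whose antiderivative (with respect to $y$) is $2|\lambda_0|^2\textrm{sgn}(\dot v(0))/[\sqrt{C_1}(C_2|\lambda_0|^2+y)]$. Integrating between $y=A^2$ (at $u=0$) and $y=A^2e^{-2\phi_*}$ (at $u=t_*$, with $\phi_*=\textrm{sgn}(\dot v(0))\sqrt{C_1}\,t_*$), equating the result to $1/\tau(q_0)$, and multiplying through by $\textrm{sgn}(\dot v(0))\sqrt{C_1}$ yields the displayed implicit equation after elementary rearrangement.

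For the degenerate limit $x_0 p_x^2(0)\to 0^+$, Remark \ref{x0negative} gives $|\lambda(u)|=|\lambda_0|e^{\dot v(0)u}$, hence
\begin{equation*}
\int_0^{t_*}|\lambda(s)|^2\,ds=|\lambda_0|^2\,\frac{e^{2\dot v(0)t_*}-1}{2\dot v(0)}.
\end{equation*}
Equating this to $1/\tau(q_0)$ and isolating $t_*$ produces the stated logarithmic formula. I do not anticipate a serious obstacle: the argument is essentially an elementary substitution in a rational integral, and the only delicate point is making the two cases $\dot v(0)\lessgtr 0$ coincide at the algebraic level, which is exactly the purpose of the $\textrm{sgn}$ function in the statement.
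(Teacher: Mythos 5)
Your proposal is correct and follows essentially the same route as the paper: substitute the unified $\textrm{sgn}(\dot v(0))$-formula for $|\lambda(u)|$ into $\int_0^{t_*}|\lambda(s)|^2ds=1/\tau(q_0)$ and evaluate the integral by the exponential change of variable (your $y=A^2e^{-2\phi}$ is the paper's $y=e^{-2\textrm{sgn}(\dot v(0))\sqrt{C_1}s}$ up to the constant factor $A^2$). The only cosmetic difference is that for the limit $x_0p_x^2(0)\to 0^+$ you integrate the degenerate curve $|\lambda_0|e^{\dot v(0)u}$ directly instead of passing to the limit in the implicit equation, which gives the same logarithmic formula.
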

\begin{proof}
Recall that if $x_0p_x^2(0)$ is sufficiently small, $|\lambda(u)|$ takes the form:
\begin{equation*}
|\lambda(u)| =  \frac{2|\lambda_0|e^{-\textrm{sgn}(\dot{v}(0))\sqrt{C_1}u}(1+\sqrt{1-C_2|\lambda_0|^2})}{C_2|\lambda_0|^2 +e^{-2\textrm{sgn}(\dot{v}(0))\sqrt{C_1}u}(1+\sqrt{1-C_2|\lambda_0|^2})^2}.
\end{equation*}
Then 
\begin{align*}
\int_0^u|\lambda(s)|^2ds = \int_0^u \frac{4|\lambda_0|^2e^{-2\textrm{sgn}(\dot{v}(0))\sqrt{C_1}s}(1+\sqrt{1-C_2|\lambda_0|^2})^2}{[C_2|\lambda_0|^2 +e^{-2\textrm{sgn}(\dot{v}(0))\sqrt{C_1}s}(1+\sqrt{1-C_2|\lambda_0|^2})^2]^2}ds.
\end{align*}
Performing the variable change: 
\begin{equation*}
y = e^{-2\textrm{sgn}(\dot{v}(0))\sqrt{C_1}s}, 
\end{equation*}
then the equation satisfied by $t_*$ and its limit as $x_0p_x^2(0) \rightarrow 0^+$ follow from straightforward computations. 
\end{proof}
\begin{rem}
The equation satisfied by $t_*$ may be rewritten as: 
\begin{equation*}
|\lambda(t_*)| e^{\textrm{sgn}(\dot{v}(0))\sqrt{C_1}t_*} = |\lambda_0|+ \frac{\sqrt{C_1}(1+\sqrt{1-C_2|\lambda_0|^2})}{\tau(q_0)|\lambda_0|}\textrm{sgn}(\dot{v}(0)).
\end{equation*}
\end{rem}
Remember that in the Hamiltonian picture, $p_x$ is the partial derivative $\partial_xS$ along the characteristic curves $u \mapsto (\lambda(u), x(u))$. Therefore, if the curve $u \mapsto \lambda(u)$ starts in the resolvent set of $Y_{t_*}h$ then the identity \eqref{Id1} shows that $\lambda(t_*)$ lies in the spectrum of $Y_{t_*}h$. This observation raises the following problem: given a time $t > 0$, find a curve $\lambda$ (i.e. initial conditions) such that $t_* = t$? As suggested by our previous result, such curves are easy to find whenever the limit $x_0p_x^2(0) \rightarrow 0^+$ is allowed. For instance, if $|h-\lambda_0|^2$ is invertible with bounded or integrable inverse then $p_x(0) < \infty$ for all $x_0 \geq 0$ and we can let $x_0 \rightarrow 0^+$ in which case the whole curve $u \mapsto x(u)$ vanishes. Otherwise, letting $x_0 \rightarrow 0^+$ yields $p_x(0) = +\infty$  in which case $t_* = 0$.

\section{The support of the Brown measure of $Y_tP$}
In this section, we deal with the special case $h = P$ where we recall that $P$ is a selfadjoint projection with rank $\tau(P) = \alpha \in (0,1)$. Define:

\begin{align*}
T_{\alpha}(\lambda_0):=t_*(\lambda_0,0) =
\begin{cases}
 \frac{1}{2\dot{v}(0)}\ln\left( 1+\frac{2\dot{v}(0)}{|\lambda_0|^2\tau(q_0)} \right), \quad \dot{v}(0) \neq 0, \lambda_0 \neq 1, \\ 
 \frac{1}{|\lambda_0|^2\tau(q_0)}, \quad \dot{v}(0) = 0, \lambda_0 \neq 1, \\ 
 0, \quad \lambda_0 =1
 \end{cases}.
\end{align*}
Since
\begin{align*}
2\dot{v}(0) = 2\alpha-1 + 2\alpha \frac{a_0- |\lambda_0|^2}{|\lambda_0-1|^2} & = - \frac{(a_0 - (1-\alpha))^2 + b_0^2 - \alpha^2}{|\lambda_0-1|^2} 
\\& = \frac{\alpha(1-|\lambda_0|^2)-(1-\alpha)|1-\lambda_0|^2}{|\lambda_0-1|^2},
\end{align*}
and 
\begin{equation*}
|\lambda_0|^2 \tau(q_0) = 1+\alpha\frac{2a_0-1}{|\lambda_0-1|^2} = \frac{\alpha|\lambda_0|^2 + (1-\alpha)|\lambda_0-1|^2}{|\lambda_0-1|^2}, 
\end{equation*}
it follows that:
\begin{align*}
T_{\alpha}(\lambda_0)=
\begin{cases}
\displaystyle \frac{|1-\lambda_0|^2}{\alpha(1-|\lambda_0|^2)- (1-\alpha)|1-\lambda_0|^2}\log\left(\frac{\alpha}{\alpha |\lambda_0|^2+ (1-\alpha)|1-\lambda_0|^2} \right),\, |1-\alpha - \lambda_0| \neq \alpha, \\
\displaystyle \frac{|\lambda_0-1|^2}{\alpha|\lambda_0|^2 + (1-\alpha)|\lambda_0-1|^2} = \frac{|\lambda_0-1|^2}{\alpha}, \, |1-\alpha - \lambda_0| = \alpha,
\end{cases}.
\end{align*}
In particular, $T_{\alpha}, \alpha\in(0,1),$ is a real analytic function on the whole plane. Note that this description of $T_{\alpha}(\lambda_0)$ extends continuously to $\alpha=1$ since:
\begin{align*}
T_{1}(\lambda_0)=
\begin{cases}
\displaystyle \frac{|1-\lambda_0|^2}{|\lambda_0|^2-1}\log\left(|\lambda_0|^2 \right), \quad |\lambda_0|\neq 1,\\
 |1-\lambda_0|^2, \quad |\lambda_0|=1,
\end{cases}.
\end{align*}
However, $T_1$ is real analytic on the punctured plane $\mathbb{R}^2\setminus\{0\}$.

\subsection{The regions $\Sigma_{t,\alpha}$}
Recall the map:
\begin{align*}
f_{t,\alpha}(z) := ze^{\frac{t}{2}\frac{2\alpha-1+z}{1-z}}, \quad t\ge0, z\in \mathbb{C}\setminus\{1\}.
\end{align*}
We can easily see that the equality
\begin{equation*}
 |f_{t,\alpha}(z)|^2 =\frac{\alpha|z|^2}{\alpha|z|^2+(1-\alpha)|1-z|^2}
\end{equation*}
is satisfied on the circle $\mathbb{T}(1-\alpha,\alpha)$ with center $1-\alpha$ and radius $\alpha$  (we take a tangential limit at $z=1$). However, it is also satisfied outside this circle and we are led to consider the following set:
\begin{equation*}
G_{t,\alpha} := \left\{z \in \mathbb{C}: |1-\alpha-z|\ne \alpha, \quad |f_{t,\alpha}(z)|^2 =\frac{\alpha|z|^2}{\alpha|z|^2+(1-\alpha)|1-z|^2} \right\}. 
\end{equation*}
In this respect, recall that $F_{t,\alpha}$ is the closure of $G_{t,\alpha}$. Then

\begin{lem}\label{Jordan}
Let $\alpha \in (0,1)$.  Then, $F_{t,\alpha}$ is a Jordan curve  for each $t>0$. Moreover, $f_{t,\alpha}$ is a one-to-one map there. 
\end{lem}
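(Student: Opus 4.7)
My plan is to identify $F_{t,\alpha}$ with the level set $\{T_{\alpha}=t\}$ of the blow-up time function of Section~2, parametrize it in polar coordinates centred at $\lambda=1$, and then verify injectivity of $f_{t,\alpha}$ along this parametrization.

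First, I would show that the defining equation of $G_{t,\alpha}$ is equivalent to $T_{\alpha}(\lambda)=t$. Writing
\begin{equation*}
|f_{t,\alpha}(\lambda)|^{2} = |\lambda|^{2}\exp\!\left(t\,\Re\tfrac{2\alpha-1+\lambda}{1-\lambda}\right)= |\lambda|^{2}\exp\!\left(t\,\tfrac{\alpha(1-|\lambda|^{2})-(1-\alpha)|1-\lambda|^{2}}{|1-\lambda|^{2}}\right)
\end{equation*}
via the identity already invoked in the formula for $\dot{v}(0)$, and equating this with $\alpha|\lambda|^{2}/[\alpha|\lambda|^{2}+(1-\alpha)|1-\lambda|^{2}]$, one recovers after taking logarithms the first-branch formula of $T_{\alpha}(\lambda)=t$. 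Since $T_{\alpha}$ is real-analytic on the whole plane and the second-branch expression $|\lambda-1|^{2}/\alpha$ matches continuously across $\mathbb{T}(1-\alpha,\alpha)$, the closure $F_{t,\alpha}$ is exactly the full level set $\{T_{\alpha}=t\}$; moreover $\lambda=1$ is the unique zero of $T_{\alpha}$.

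Next I would parametrize by polar coordinates around this unique zero: $\lambda=1-se^{i\phi}$, $s>0$, $\phi\in[0,2\pi)$. The equation $T_{\alpha}(\lambda)=t$ reduces, after routine algebra, to
\begin{equation*}
(1-2s\cos\phi+s^{2}/\alpha)\exp\!\left(\tfrac{t(2\alpha\cos\phi-s)}{s}\right)=1.
\end{equation*}
For each $\phi$, I would then show this has a unique genuine solution $s=s(\phi)>0$, via a monotonicity analysis of the logarithm of the left-hand side in $s$, split into the two regimes $2\alpha\cos\phi\gtrless s$ (corresponding to $\dot{v}(0)\gtrless 0$). Some care is required because the equation also admits a spurious root exactly where $\lambda(s)$ crosses the critical circle $\mathbb{T}(1-\alpha,\alpha)$, introduced when clearing the denominator $2\dot{v}(0)$ in the manipulation above; this root must be discarded. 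The implicit function theorem then produces a real-analytic map $\phi\mapsto s(\phi)$ with $s(0)=s(2\pi)$, and single-valuedness of $s(\phi)$ forces the curve $\phi\mapsto 1-s(\phi)e^{i\phi}$ to be simple, so $F_{t,\alpha}$ is a Jordan curve.

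To prove that $f_{t,\alpha}$ is one-to-one on $F_{t,\alpha}$, I would compute $\partial_{\phi}\arg f_{t,\alpha}(\lambda(\phi))$ using the logarithmic derivative $f_{t,\alpha}'(\lambda)/f_{t,\alpha}(\lambda)=1/\lambda+t\alpha/(1-\lambda)^{2}$ and check it has constant sign along the curve; combined with the Jordan-curve structure established in the previous step, this forces $f_{t,\alpha}|_{F_{t,\alpha}}$ to be a homeomorphism onto its image. The main obstacle is the uniqueness step for $s(\phi)$: the transcendental equation admits no closed-form solution, and one must carefully distinguish the genuine root from the spurious one coming from the critical circle, with the monotonicity behaviour changing character as $\phi$ takes the curve across $\mathbb{T}(1-\alpha,\alpha)$. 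Once this is settled, the final sign check for $\partial_{\phi}\arg f_{t,\alpha}$ becomes a tractable computation in the explicit parametrization.
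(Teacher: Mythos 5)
Your reduction of the defining equation of $G_{t,\alpha}$ to the level set $\{T_{\alpha}=t\}$ and the polar equation $(1-2s\cos\phi+s^{2}/\alpha)\exp\bigl(t(2\alpha\cos\phi-s)/s\bigr)=1$ are both correct, and the approach (parametrize by the angle about $\lambda=1$) is genuinely different from the paper, which instead applies the M\"obius map $z\mapsto w=(2\alpha-1+z)/(1-z)$ and exhibits the set as an explicit graph $y^{2}=\phi_{t,\alpha}(x)/(e^{tx}-1)$ over an $x$-interval. But your route has a real gap exactly where you locate "the main obstacle": the unique solvability of $s(\phi)$ is equivalent to star-shapedness of $\{T_{\alpha}<t\}$ about $\lambda=1$, and this is the entire content of the Jordan-curve claim, not a routine monotonicity check. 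For $\cos\phi\le 0$ the logarithm $G(s)$ of your left-hand side is indeed strictly increasing and the root is unique; but for $\cos\phi>0$ one has $G(0^{+})=G(+\infty)=+\infty$ and $G(2\alpha\cos\phi)=0$ (your spurious root, coming from the fact that the critical circle $\mathbb{T}(1-\alpha,\alpha)$ passes through $1$), so $G$ is not monotone on either regime $s\lessgtr 2\alpha\cos\phi$; the genuine zero sits beyond a negative dip of $G$, and ruling out additional crossings requires controlling the number of critical points of $G$ (or proving monotonicity of $s\mapsto T_{\alpha}(1-se^{i\phi})$ itself). None of this is supplied, and it is precisely the kind of work the paper does instead through the zero-count of $\phi_{t,\alpha}$, which needs the nontrivial inequality $e^{-tx}-1+tx/(1+x)^{2}<0$ on $(-1,0)$.

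The injectivity step is also not workable as stated. Monotonicity of $\phi\mapsto\arg f_{t,\alpha}(\lambda(\phi))$ along the closed curve would force $f_{t,\alpha}(F_{t,\alpha})$ to wind around the origin; but $0\in\Sigma_{t,\alpha}$ only when $t>T_{\alpha}(0)$, and for small $t$ the curve $F_{t,\alpha}$ is close to the circle $|\lambda-1|=\sqrt{\alpha t}$, so $f_{t,\alpha}(F_{t,\alpha})$ has winding number $0$ about the origin and its argument cannot be monotone. Hence the constant-sign check for $\partial_{\phi}\arg f_{t,\alpha}$ must fail on a whole range of $(t,\alpha)$ and does not yield injectivity in general. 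The paper's argument avoids this: if $f_{t,\alpha}(z_{1})=f_{t,\alpha}(z_{2})$ with both points on $F_{t,\alpha}$, the defining modulus equation gives $|1-z_{1}|/|z_{1}|=|1-z_{2}|/|z_{2}|$, which in the $w$-variable, combined with the graph parametrization, reduces to injectivity of $u\mapsto(e^{tu}-1)/u$ on $\mathbb{R}$; adapting something of that kind (modulus information rather than argument information) seems unavoidable if you keep your parametrization.
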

\begin{proof}
Equivalently, we shall consider the image of  $G_{t,\alpha}$ under the Mobius transformation: 
\begin{equation*}
z\mapsto w = \frac{2\alpha-1+z}{1-z}
\end{equation*}
since clearly $G_{t,\alpha}$ does not contain $z=1$. Note also that the circle 
\begin{equation*}
\mathbb{T}(1-\alpha,\alpha) = \{z, |1-\alpha-z| = \alpha\}
\end{equation*}
is mapped onto $\{\Re(w) = 0\} \cup \{\infty\}$. Now, write $w =x+iy$ then
\begin{align*}
 |f_{t,\alpha}(z)|^2 =\frac{\alpha|z|^2}{\alpha|z|^2+(1-\alpha)|1-z|^2} &\Leftrightarrow \left|\frac{x+1-2\alpha+iy}{x+1+iy}\right|^2e^{tx}=\frac{\alpha(x+1-2\alpha)^2+\alpha y^2}{\alpha(x+1-2\alpha)^2+\alpha y^2+4\alpha^2(1-\alpha)}
  \\&\Leftrightarrow \frac{x^2+2x(1-2\alpha)+1+y^2}{(x+1)^2+y^2} e^{tx}=1.
\end{align*}
Consequently, 
\begin{equation}
y^2 = \frac{\phi_{t,\alpha}(x)}{e^{tx}-1}, \quad x \neq 0, 
\end{equation}
where
\begin{equation*}
\phi_{t,\alpha}(x)=(x+1)^2-(x^2+2x(1-2\alpha)+1)e^{tx},
\end{equation*}
while 
\begin{equation*}
y^2 =  \frac{4\alpha}{t}-1; \quad x=0,
\end{equation*}
provided that $t \leq 4\alpha$. The map $\phi_{t,\alpha}$ is smooth and satisfies $\phi_{t,\alpha}(0)=0, \phi_{t,\alpha}(-1)=-4\alpha e^{-t}<0$ together with the limits: 
\begin{equation*}
\lim_{x\rightarrow-\infty}\phi_{t,\alpha}(x)= + \infty,\quad \lim_{x\rightarrow+\infty}\phi_{t,\alpha}(x)=-\infty.
\end{equation*}
Moreover its zero set coincides with the number of the roots of the equation:
\begin{equation*}
1-e^{-tx} = \frac{4\alpha x}{(x+1)^2}. 
\end{equation*} 
A quick inspection shows then that there is a unique negative root $x_{t,\alpha}^- < -1$ for which $\phi_{t,\alpha}(x) \geq 0$ on $(-\infty, x_{t,\alpha}^-]$. Moreover, there is at most one positive solution and at most another negative solution in $(-1,0)$. In particular, there is no negative solution in $(-1,0)$ when $t \leq 4\alpha$. To see this last fact, we note that for any $x \in (-1,0)$, 
\begin{equation*}
e^{-t x} - 1  + \frac{4\alpha x}{(1+x)^2} \leq e^{-t x} - 1  + \frac{t x}{(1+x)^2} . 
\end{equation*}
Then it suffices to prove that 
\begin{equation}\label{IneqZero}
e^{-t x} - 1  + \frac{t x}{(1+x)^2} < 0, \quad  x \in (-1,0). 
\end{equation}
To this end, we differentiate the LHS of this inequality with respect to $t \in (0,4)$ to get: 
 \begin{equation*}
\frac{-x[(1+x)e^{-t x/2} +1][(1+x)e^{-t x/2} -1]}{(1+x)^2}. 
\end{equation*}
But, the variations of the real function: 
\begin{equation*}
x \mapsto (1+x)e^{-t x/2} -1
\end{equation*}
on the interval $(-1,0)$ show that it is negative when $t < 2$ while its sign changes exactly once from negative to positive when $t > 2$. If $t < 2$, then the inequality \eqref{IneqZero} is clear since the function
\begin{equation*}
t\mapsto e^{-t x} - 1  + \frac{t x}{(1+x)^2}  
\end{equation*}
is decreasing and vanishes at $t =0$. Otherwise, by continuity at $t = 2$, it only remains to prove that the value of 
 \begin{equation*}
e^{-t x} - 1 + \frac{t x}{(1+x)^2}   
\end{equation*}
at $t =4$ is negative. Explicitly,
\begin{equation*}
e^{-4 x} - 1 + \frac{4 x}{(1+x)^2}=\left(e^{-2x} - \frac{1-x}{1+x}\right)\left(e^{-2x} + \frac{1-x}{1+x}\right)  < 0 , \quad x \in (-1,0),
\end{equation*}
or equivalently: 
\begin{equation}\label{IneqZero1}
e^{-2x} - \frac{1-x}{1+x} < 0 , \quad x \in (-1,0). 
\end{equation} 
Since the derivative of the LHS of this inequality is given by: 
\begin{equation*}
2\frac{[1+(1+x)e^{-x}][1-(1+x)e^{-x}]}{(1+x)^2},
\end{equation*}
and since $1-(1+x)e^{-x} > 0$ on $(-1,0)$ then \eqref{IneqZero1} holds true. 
Now, since $\phi_{t,\alpha}'(0)=4\alpha-t$, then we shall then distinguish separately the three cases $t<4\alpha,\ t=4\alpha$ and $t>4\alpha$. 

\begin{itemize}
\item $t<4\alpha$: in this case $\phi_{t,\alpha}'(0)>0$ therefore there exists a unique $x_{t,\alpha}^+>0$ such that $\phi_{t,\alpha}(x) \geq 0$ on $[0,x_{t,\alpha}^+]$. 
Consequently, the image of $F_{t,\alpha}$ under the Mobius transformation above is parametrized by:   
\begin{equation*}
x \in [x_{t,\alpha}^-, x_{t,\alpha}^+] \setminus \{0\}, \quad y_{t,\alpha}(x) = \pm \sqrt{\frac{\phi_{t,\alpha}(x)}{e^{tx}-1}},
\end{equation*}
and  
\begin{equation*}
x =0, \quad y_{t,\alpha}(0) \in\left\{ \pm \sqrt{\frac{4\alpha}{t}-1}\right\},
\end{equation*}
 which is clearly a Jordan curve. 
\item $t = 4\alpha$: since $\phi_{t,\alpha}'(0) = 0$ then $\phi_{t,\alpha}(x) \leq 0$ for all $x \geq 0$. As a matter of fact, the parametrization of the image of $F_{t,\alpha}$ is given by: 
\begin{equation*}
x \in [x_{t,\alpha}^-, 0) , \quad y_{t,\alpha}(x) = \pm \sqrt{\frac{\phi_{t,\alpha}(x)}{e^{tx}-1}},
\end{equation*}
and  $x =0, \ y_{t,\alpha}(0) =0$, therefore the image of $F_{t,\alpha}$ is a Jordan curve as well. 
\item $t > 4\alpha$: this case is similar to the previous one since $\phi_{t,\alpha}'(0) < 0$ which forces the existence of a negative root $\tilde{x}_{t,\alpha}^-$ in $(-1,0)$. We then have $\phi_{t,\alpha}(x) > 0$ on $(\tilde{x}_{t,\alpha}^-, 0)$ and $\phi_{t,\alpha}(x) \leq 0$ for $x \geq 0$. The corresponding parametrization is then given by: 
\begin{equation*}
x \in [x_{t,\alpha}^-, \tilde{x}_{t,\alpha}^-] , \quad y_{t,\alpha}(x) = \pm \sqrt{\frac{\phi_{t,\alpha}(x)}{e^{tx}-1}},
\end{equation*}
which clearly yields a Jordan curve. 
\end{itemize}
Finally, take $z_1 \neq z_2$ lying on $F_{t,\alpha}$ such that $f_{t,\alpha}(z_1) = f_{t,\alpha}(z_2)$. Then 
\begin{equation*}
\left|\frac{1-z_1}{z_1}\right| = \left|\frac{1-z_2}{z_2}\right|. 
\end{equation*}
Setting: 
\begin{equation*}
w_1 = \frac{2\alpha-1+z_1}{1-z_1}, \quad w_2 = \frac{2\alpha-1+z_2}{1-z_2},
\end{equation*}
we get the equivalent condition: 
\begin{equation*}
|w_1+1-2\alpha| = |w_2+1-2\alpha|. 
\end{equation*}
Write $w_1 = x_1 + iy_1, w_2 = x_2+iy_2$ then the last condition reads: 
\begin{equation*}
(x_1+1-2\alpha)^2 + \frac{\phi_{t,\alpha}(x_1)}{e^{tx_1}-1} = (x_2+1-2\alpha)^2 + \frac{\phi_{t,\alpha}(x_2)}{e^{tx_2}-1}
\end{equation*}
which reduces after some simplifications to: 
\begin{equation*}
\frac{e^{tx_1}-1}{x_1} = \frac{e^{tx_2}-1}{x_2}.
\end{equation*}
Since the map $u \mapsto (e^{tu}-1)/u$ is invertible on $\mathbb{R}$ then $x_1 = x_2$ and in turn $z_2 = z_1$ or $z_2 = \overline{z_1}$. The second alternative is only possible when $z_1$ is real since $f_{t,\alpha}(\overline{z_1}) = \overline{f_{t,\alpha}(z_1)}$, 
in which case $z_1 = z_2$ as well.   
\end{proof}

Let $\Sigma_{t,\alpha}$ be the bounded component of the complement of $F_{t,\alpha}$. By the Jordan curve Theorem, we have:
\begin{equation*}
\partial\Sigma_{t,\alpha}=F_{t,\alpha}.
\end{equation*}

\begin{figure}
\begin{center}
\includegraphics[width=5cm]{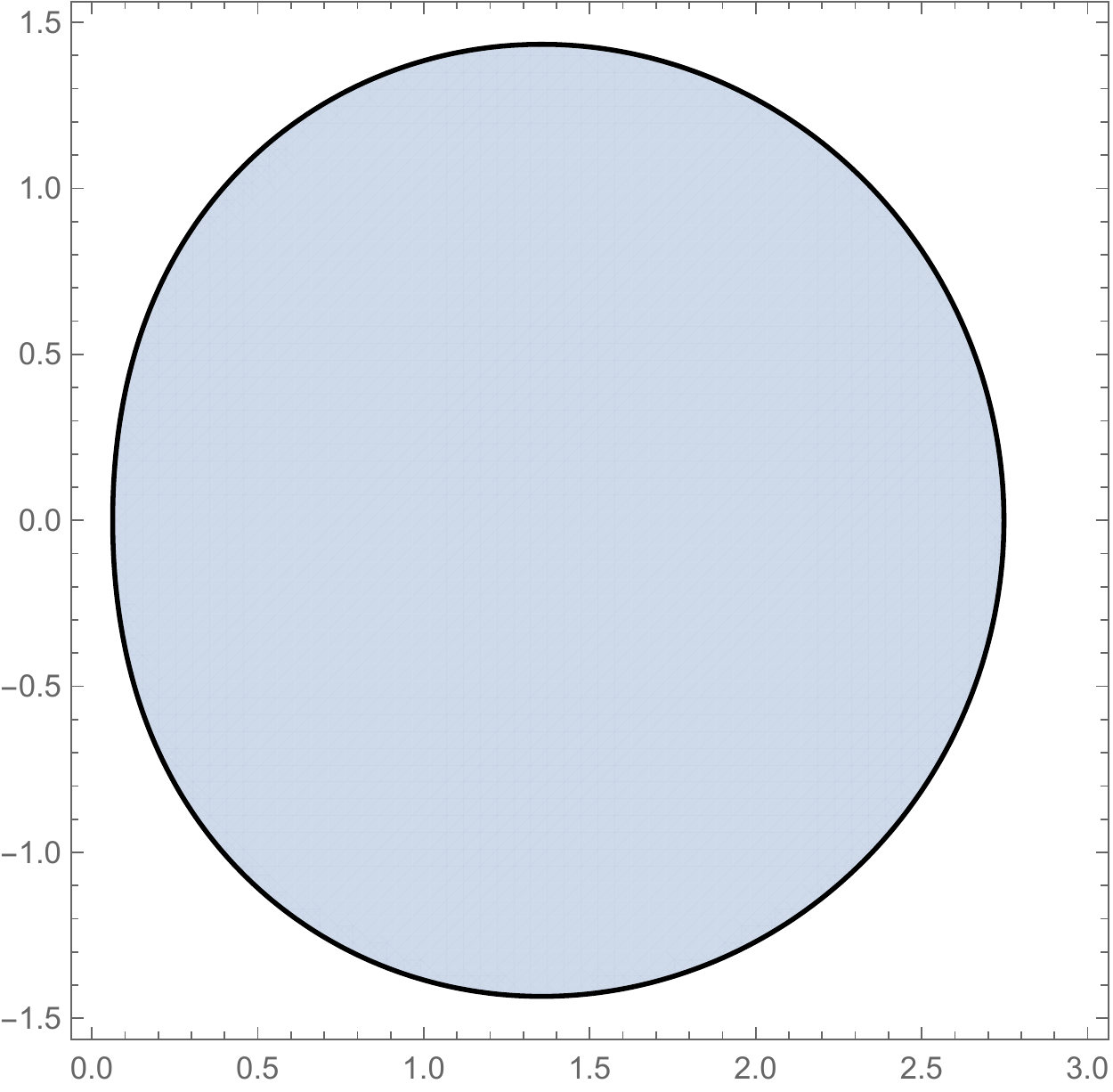} 
\includegraphics[width=5cm]{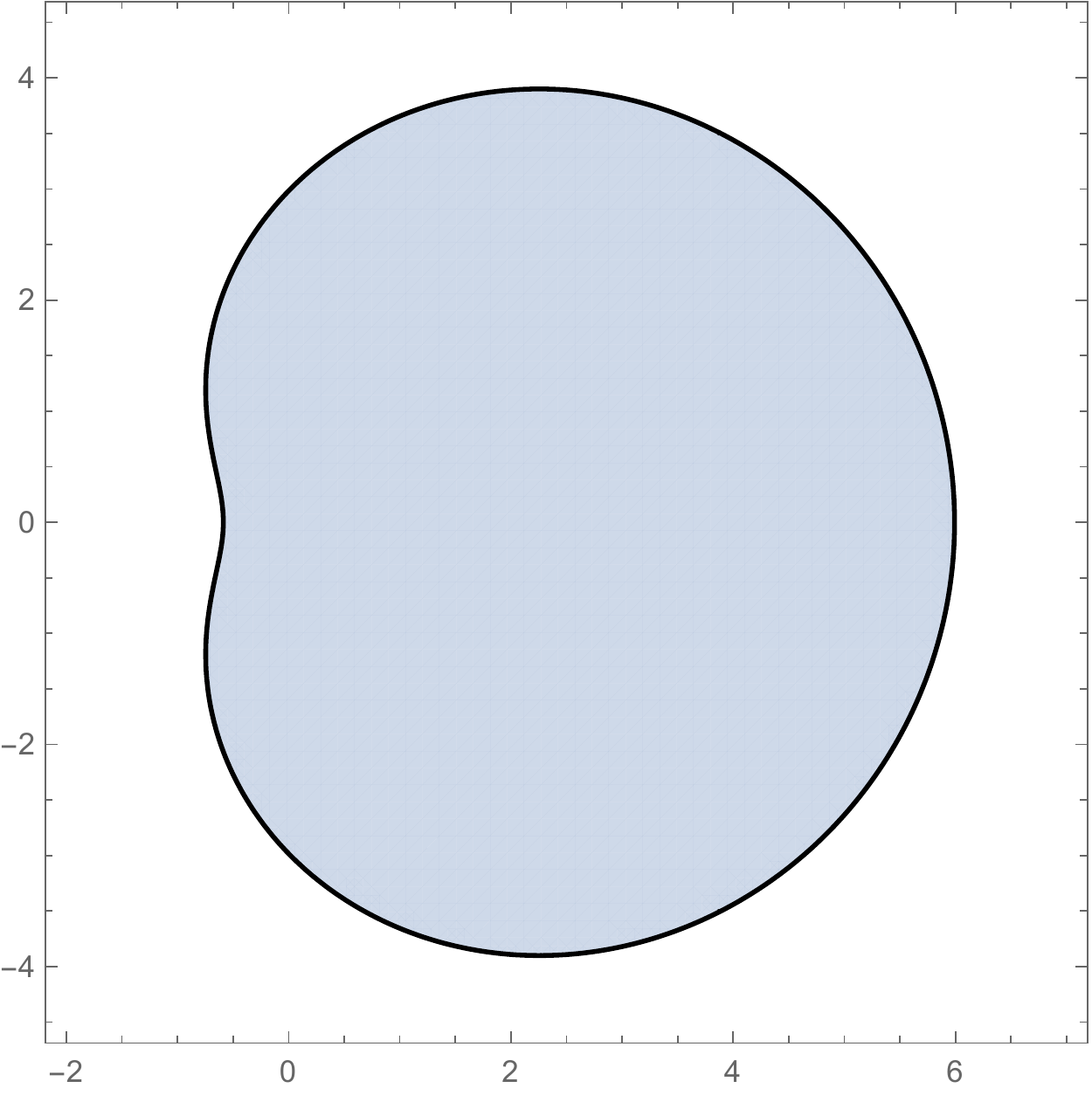} 
\end{center}
\caption{The regions $\Sigma_{t,\alpha}$ with $(t,\alpha)=(2,0.3)$ (left) and $(t,\alpha)=(3,0.7)$ (right)}.
\begin{center}
\includegraphics[width=5cm]{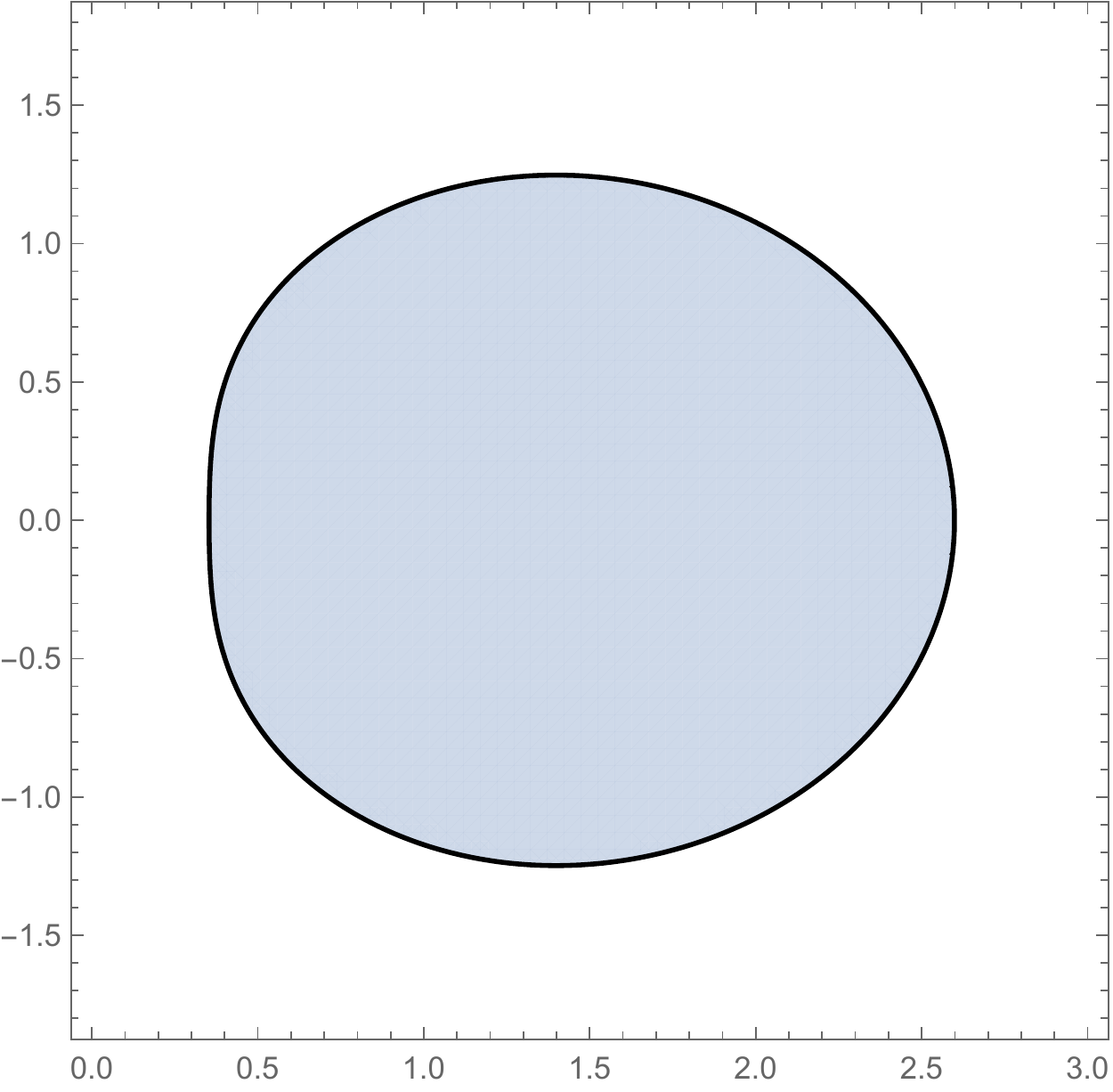} 
\includegraphics[width=5cm]{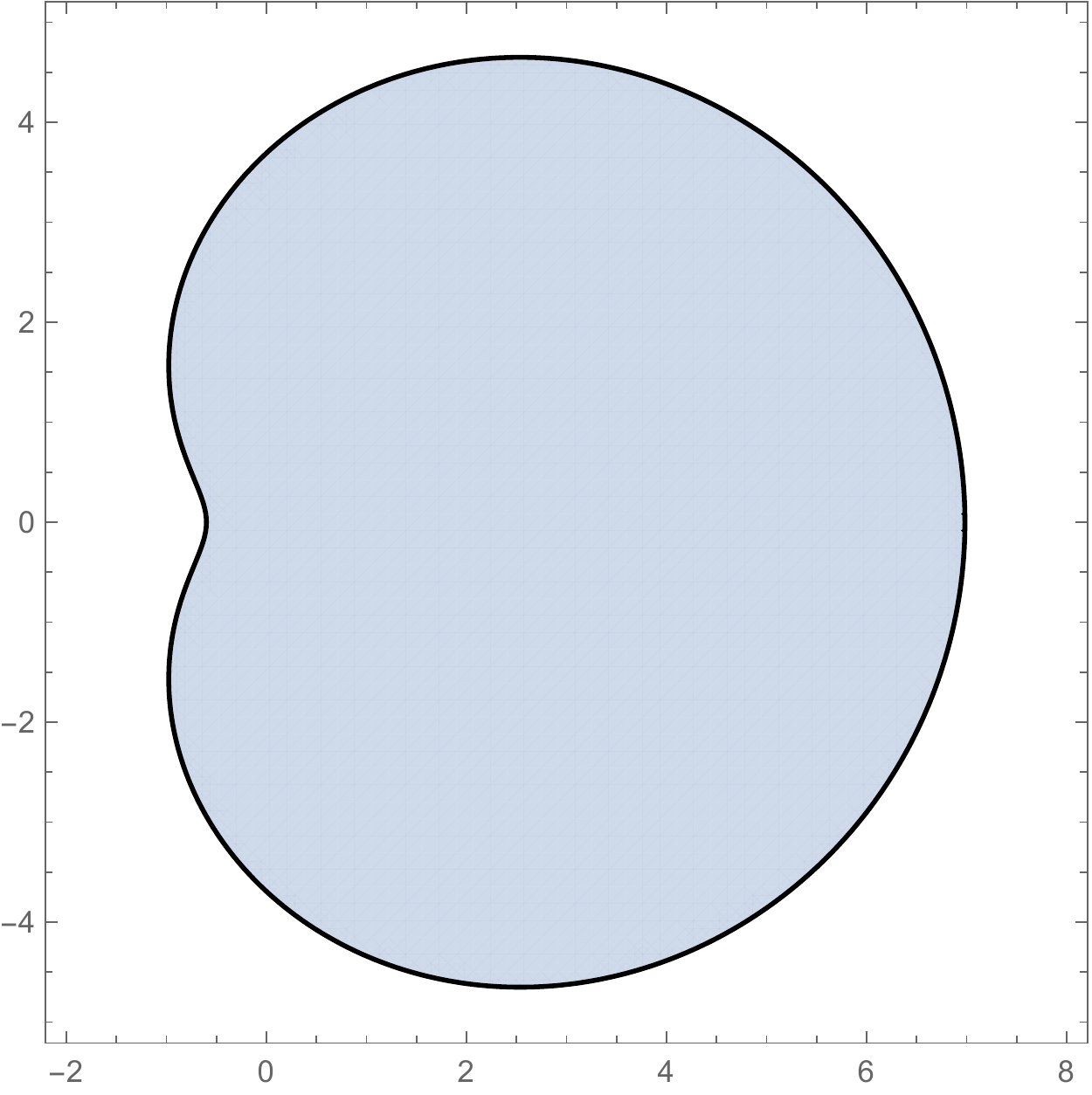} 
\includegraphics[width=5cm]{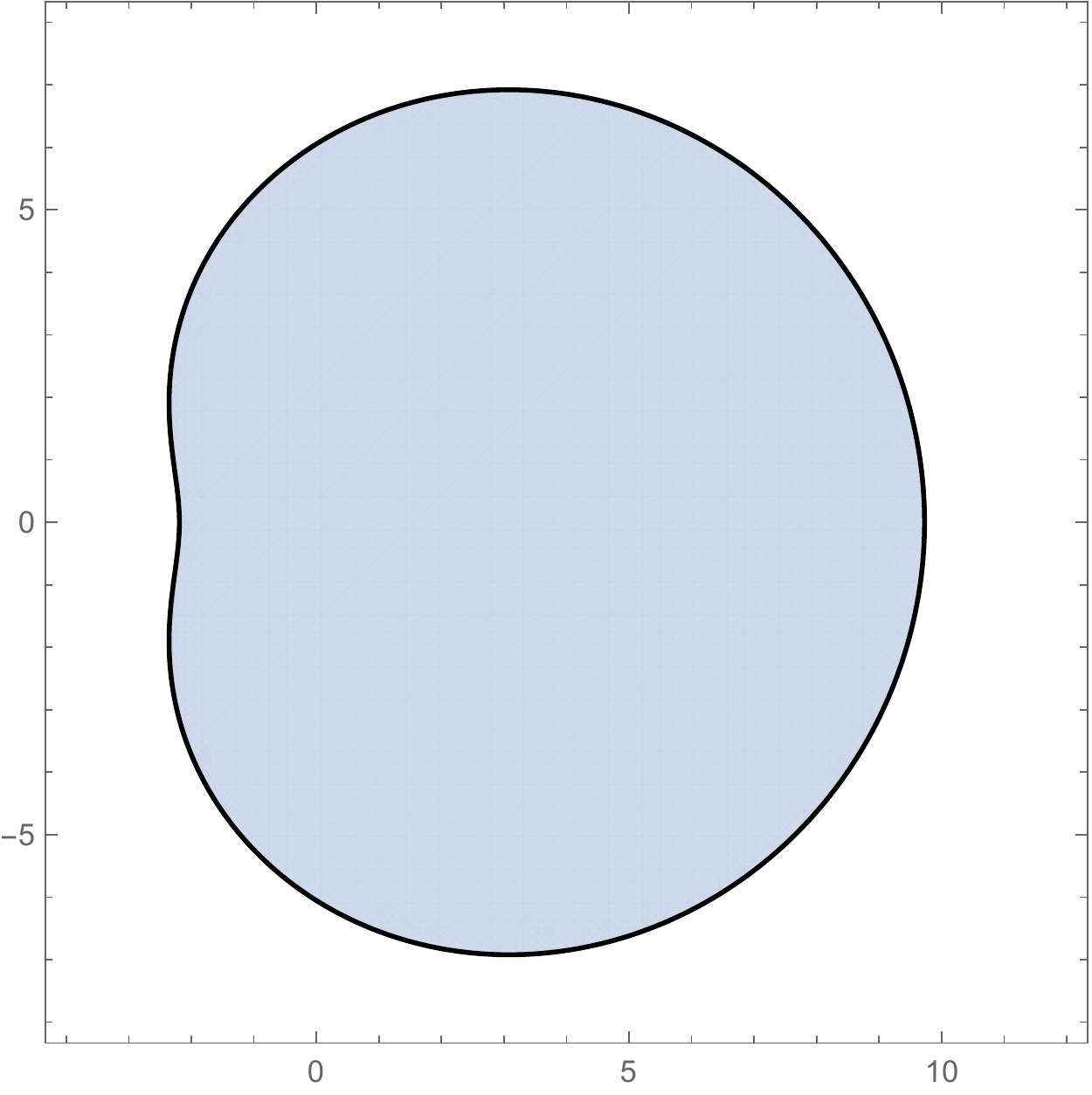} 
\includegraphics[width=5cm]{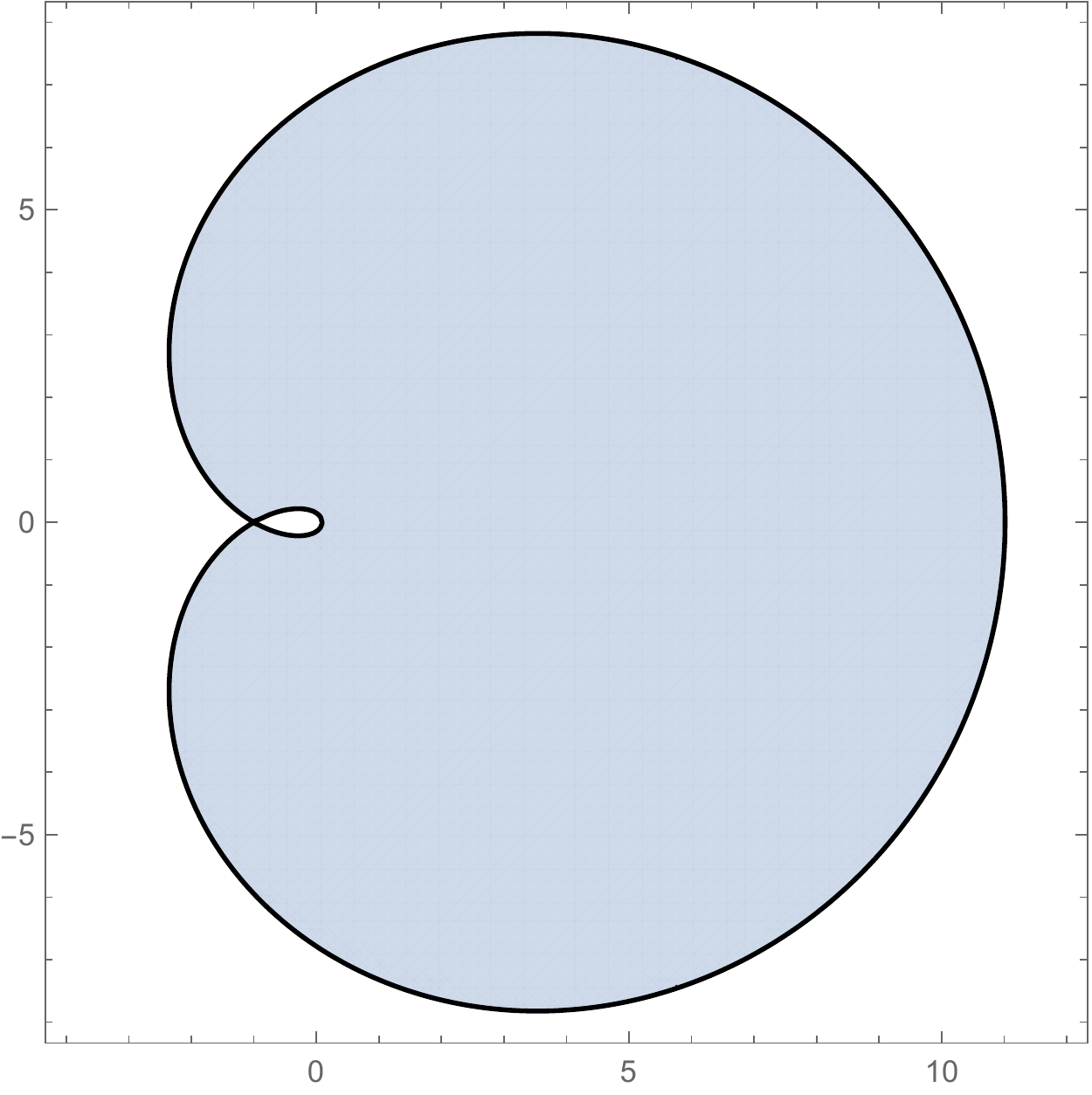} 
\end{center}
\caption{The regions $\Sigma_{t,\alpha}$ with $(t,\alpha)=(1,0.8)$ (up-left), $(3.2,0.8)$ (up-right), $(4,0.8)$ (down-left), $(4,1)$ (down-right)}.
\end{figure}

Then, $\Sigma_{t,\alpha}$ may be characterized as follows. 
\begin{pro}\label{boundary}
For any $t > 0$  and $\alpha\in (0,1)$, we have
\begin{equation*}
\Sigma_{t,\alpha}:=\{\lambda_0 \in \mathbb{C}: T_{\alpha}(\lambda_0) < t\}.
\end{equation*}
In particular, $1 \in \Sigma_{t,\alpha}$. 
\end{pro}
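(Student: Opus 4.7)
The plan is to identify $F_{t,\alpha}$ with the level set $\{T_\alpha=t\}$ of the real-analytic function $T_\alpha$, and then to pin down $\Sigma_{t,\alpha}$ via continuity, connectedness, and the behavior of $T_\alpha$ at infinity.

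First I would introduce the shorthand
\[
A:=\alpha(1-|\lambda_0|^2)-(1-\alpha)|1-\lambda_0|^2,\qquad B:=\alpha|\lambda_0|^2+(1-\alpha)|1-\lambda_0|^2,\qquad C:=|1-\lambda_0|^2,
\]
and record the algebraic identity $A+B=\alpha$ together with the elementary computation
\[
\Re\!\left(\frac{2\alpha-1+\lambda_0}{1-\lambda_0}\right)=\frac{A}{C},
\]
obtained by multiplying numerator and denominator by $1-\overline{\lambda_0}$ and simplifying. Since $|f_{t,\alpha}(\lambda_0)|^2=|\lambda_0|^2\exp\!\left(t\,\Re\frac{2\alpha-1+\lambda_0}{1-\lambda_0}\right)$, the defining equation of $G_{t,\alpha}$ rewrites, after cancelling $|\lambda_0|^2$, as
\[
\frac{tA}{C}=\log\!\frac{\alpha}{\alpha-A},
\]
which, off the circle $\{A=0\}=\{|1-\alpha-\lambda_0|=\alpha\}$, is precisely $T_\alpha(\lambda_0)=t$ by the explicit formulas recalled above. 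Taking the closure and invoking the real analyticity of $T_\alpha$ on the whole plane (already noted in the text) yields the crucial identification $F_{t,\alpha}=\{\lambda_0\in\mathbb{C}:T_\alpha(\lambda_0)=t\}$.

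Next I would apply Lemma~\ref{Jordan} together with the Jordan curve theorem: $\mathbb{C}\setminus F_{t,\alpha}$ has exactly two open connected components, the bounded one being $\Sigma_{t,\alpha}$. Since $T_\alpha$ is continuous and equals $t$ exactly on $F_{t,\alpha}$, each component lies entirely in $\{T_\alpha<t\}$ or entirely in $\{T_\alpha>t\}$. From the explicit expression
\[
T_\alpha(\lambda_0)=\frac{C}{A}\log\!\frac{\alpha}{B}
\]
one reads off $T_\alpha(\lambda_0)\sim\log(|\lambda_0|^2/\alpha)\to+\infty$ as $|\lambda_0|\to\infty$, so the unbounded component is contained in $\{T_\alpha>t\}$ and consequently $\Sigma_{t,\alpha}=\{T_\alpha<t\}$. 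The final assertion $1\in\Sigma_{t,\alpha}$ is then immediate from $T_\alpha(1)=0<t$.

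The main technical point requiring care will be the handling of the degenerate circle $\{A=0\}$, along which the equation defining $G_{t,\alpha}$ collapses into the trivial identity $0=\log 1$ (since $B=\alpha$ on this circle, as a direct substitution shows) while the formula for $T_\alpha$ simplifies to $|1-\lambda_0|^2/\alpha$. A short limit argument, based on the two-sided continuity of $T_\alpha$ across the circle, will show that the closure of the non-trivial branch precisely recovers the points of this circle where $T_\alpha=t$, so that the identification of $F_{t,\alpha}$ with the full Jordan curve supplied by Lemma~\ref{Jordan} is legitimate and the connectedness argument goes through without a gap.
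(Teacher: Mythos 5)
Your proposal is correct and follows essentially the same route as the paper: you rewrite the defining equation of $G_{t,\alpha}$ (using $\Re\frac{2\alpha-1+\lambda_0}{1-\lambda_0}=A/C$, which is exactly the paper's $2\dot v(0)$) as $T_\alpha(\lambda_0)=t$ off the circle $\mathbb{T}(1-\alpha,\alpha)$, and then identify $\Sigma_{t,\alpha}$ with $\{T_\alpha<t\}$ via the Jordan curve structure, the continuity of $T_\alpha$, its divergence at infinity, and $T_\alpha(1)=0$. The only difference is cosmetic: the paper obtains the same identity through the characteristic flow at $x_0=0$ ($\lambda(t)=f_{t,\alpha}(\lambda_0)$, circle $=$ zero set of $\dot v(0)$), while you derive it by direct algebra, and your handling of the circle and of the cancellation of $|\lambda_0|^2$ is at the same level of detail as the paper's.
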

\begin{proof}
We note that when $x_0 = 0$, we have
\begin{align*}
|\lambda(t)|=|\lambda_0|e^{\dot{v}(0)t}=|\lambda_0|e^{-\frac{t}{2}-\alpha t\frac{(a_0-1)}{|\lambda_0-1|^2}},\quad \quad  \theta(t) = \alpha t \frac{b_0}{|\lambda_0-1|^2} + \theta_0.
\end{align*}
Equivalently 
\begin{equation*}
\lambda(t)= \lambda_0e^{\frac{t}{2}(-1+2\alpha/(1-\lambda_0))}   = f_{t,\alpha }(\lambda_0).
\end{equation*}
On the other hand, since
\begin{align*}
2\dot{v}(0) & = \frac{\alpha(1-|\lambda_0|^2)-(1-\alpha)|1-\lambda_0|^2}{|\lambda_0-1|^2},
\end{align*}
then the circle $\mathbb{T}(1-\alpha, \alpha)$ is exactly the zero set of $\dot{v}(0)$. Hence, if $\lambda_0$ is such that $\dot{v}(0)\ne 0$, then
\begin{multline*}
  |f_{t,\alpha}(\lambda_0)|^2 = |\lambda_0|^2e^{2t\dot{v}(0)}= \frac{\alpha|\lambda_0|^2}{\alpha|\lambda_0|^2+(1-\alpha)|1-\lambda_0|^2}  \\ \Leftrightarrow t =  \frac{|\lambda_0-1|^2}{\alpha(1-|\lambda_0|^2)-(1-\alpha)|1-\lambda_0|^2} \log\left( \frac{\alpha}{\alpha|\lambda_0|^2+(1-\alpha)|1-\lambda_0|^2}\right)
 \end{multline*}
Thus, the set $G_{t,\alpha}$, which coincides with the Jordan curve $F_{t,\alpha}$ except possibly at one or two points, is exactly the set where $T_{\alpha}(\lambda_0) = t$. But, the set
\begin{equation*}
\{\lambda_0, T_{\alpha}(\lambda_0) > t\},
\end{equation*}
is unbounded since 
\begin{equation*}
\lim_{|\lambda_0| \rightarrow +\infty} T_{\alpha}(\lambda_0) = +\infty
\end{equation*}
while \begin{equation*}
\{\lambda_0, T_{\alpha}(\lambda_0) < t\},
\end{equation*}
is bounded, then the first statement of the proposition is clear. As to the second one, it follows from the first statement together with $T_{\alpha}(1) = 0$.  
\end{proof}

\begin{rem}\label{homeomorphism}
When $\alpha = 1$, the map 
\begin{align*}
f_{t,1}(z)=ze^{\frac{t}{2}\frac{1+z}{1-z}}
\end{align*}
describes the spectrum of $Y_t$ (\cite{Biane1}) and encodes the support $\Sigma_{t,1}$ of the Brown measure of the free multiplicative Brownian motion (\cite{DHK}). Writing 
\begin{equation*}
f_{t,\alpha}(z) = e^{(\alpha-1)t/2}f_{\alpha t,1}(z)
\end{equation*}
we readily deduce from \cite{Biane1} (see paragraph 4.2.3) that $f_{t,\alpha}$ is a one-to-one map from the Jordan domain 
\begin{equation*}
\Gamma_{t,\alpha }:=\{z \in \mathbb{D},  |f_{t,\alpha}(z)| < e^{(\alpha-1)t/2}\} = \{z \in \mathbb{D},  |f_{\alpha t,1}(z)| <1\}
\end{equation*} 
onto the open disc $\mathbb{D}(0,e^{(\alpha-1)t/2})$ and from a neighborhood of infinity (the image of $\Gamma_{t,\alpha }$ under the inversion $z \mapsto 1/z$) onto $\mathbb{C} \setminus \overline{\mathbb{D}(0,e^{(\alpha-1)t/2})}$. In both cases, it extends to a homeomorphism between the corresponding boundaries. These properties satisfied by $f_{t,\alpha}$ will be used to prove Theorem \ref{surj-outside} below. Note also that $\Sigma_{t,1}$ may be doubly-connected in contrast to $\Sigma_{t,\alpha}, \alpha \in (0,1)$ (see \cite{Biane1}). 
\end{rem}

\subsection{Proof of Theorem \ref{support}}

From Lemma \ref{Jordan}, we deduce that $ f_{t,\alpha}\big(F_{t,\alpha} \big)$ is a Jordan curve for any $t>0$  and any $\alpha\in (0,1)$. Denote $\Omega_{t,\alpha}$ the region enclosed by this curve.
\begin{figure}
\begin{center}
\includegraphics[width=4.8cm]{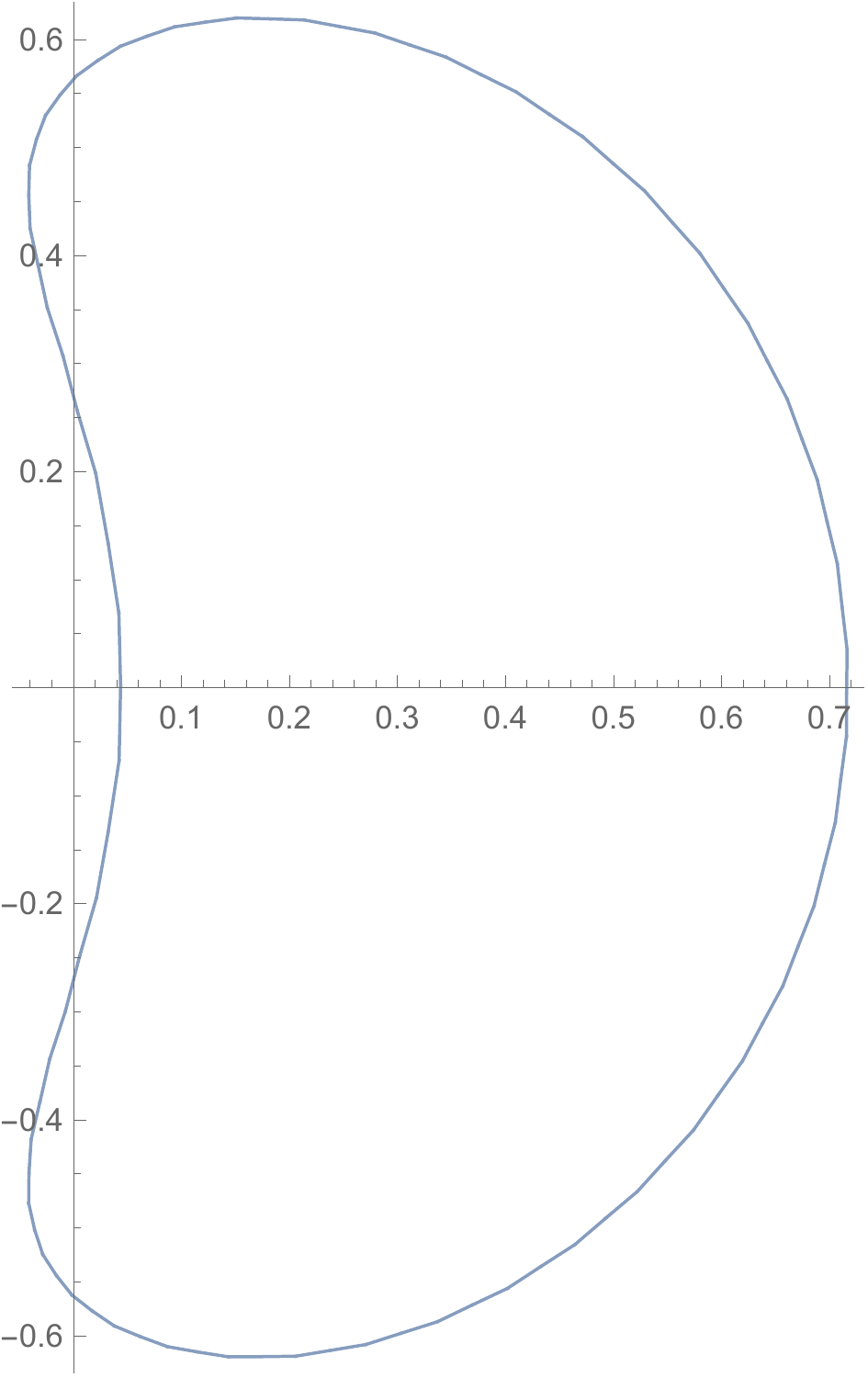} 
\includegraphics[width=7cm]{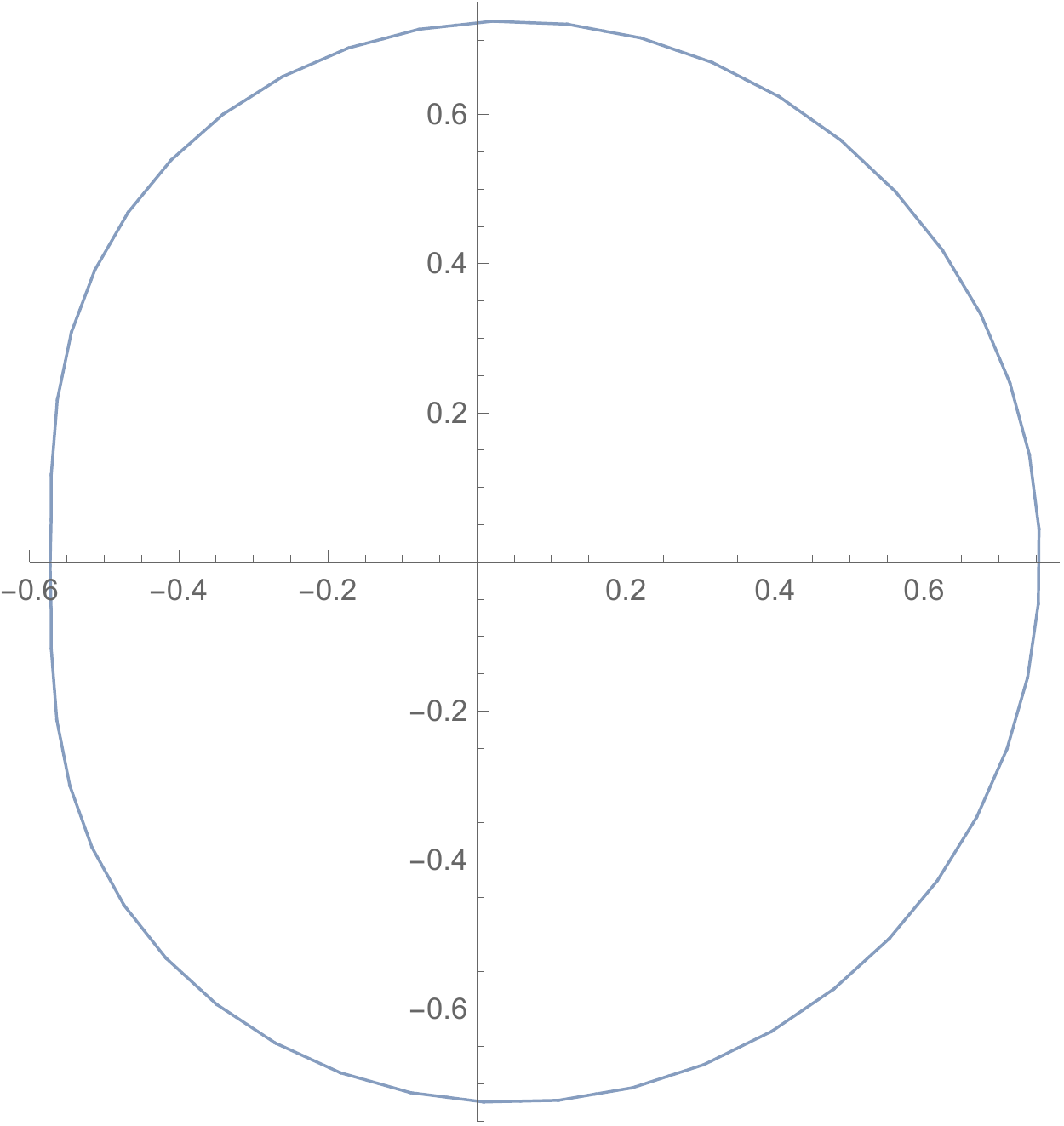} 
\end{center}
\caption{The regions $\Omega_{t,\alpha}$ with $(t,\alpha)=(2,0.3)$ (left) and $(t,\alpha)=(4,0.5)$ (right)}.
\end{figure}
The major step toward proving Theorem \ref{support} is the following Theorem which shows that any complex number lying outside $\overline{\Omega}_{t,\alpha}$ may be reached by a characteristic curve $u \mapsto \lambda(u)$ exactly at time $t$. 
\begin{teo}\label{surj-outside}
Let $\lambda$ outside $\overline{\Omega}_{t,\alpha}, t > 0, \alpha \in (0,1)$. Then there exists  $\lambda_0$ outside $\overline{\Sigma}_{t,\alpha}$ so that the solution to the system \eqref{sys} is well defined up to time $t$ and
 $\lambda(u)$ approaches $\lambda$ as $u$ approaches $t^-$. Moreover, 
\begin{equation*}
\lambda_0 = f_{t,\alpha}^{-1}(\lambda) \in \Gamma_{t,\alpha} \cup 1/\Gamma_{t,\alpha}
\end{equation*}
provided that $|\lambda| \neq e^{(\alpha-1)t/2}$ in which case it is unique. 
\end{teo}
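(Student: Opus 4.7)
The plan rests on the explicit formula $\lambda(u) = f_{u,\alpha}(\lambda_0)$ derived in the proof of Proposition \ref{boundary} for characteristic curves with initial data $x_0 = 0$: this formula is valid on the whole interval $[0, T_\alpha(\lambda_0))$, and by Proposition \ref{boundary} the complement $\mathbb{C} \setminus \overline{\Sigma}_{t,\alpha}$ is exactly the set of initial points $\lambda_0$ for which $T_\alpha(\lambda_0) > t$. Hence the theorem reduces to exhibiting, for each $\lambda \notin \overline{\Omega}_{t,\alpha}$, some $\lambda_0 \in \mathbb{C}\setminus\overline{\Sigma}_{t,\alpha}$ with $f_{t,\alpha}(\lambda_0) = \lambda$; the resulting curve is then automatically defined on $[0,t]$ and satisfies $\lambda(u) \to \lambda$ as $u \to t^-$.

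For existence and uniqueness of the preimage, I would invoke Remark \ref{homeomorphism}, which supplies two explicit inverse branches of $f_{t,\alpha}$: a homeomorphism from $\Gamma_{t,\alpha}$ onto $\mathbb{D}(0, e^{(\alpha-1)t/2})$ and another from $1/\Gamma_{t,\alpha}$ onto $\mathbb{C} \setminus \overline{\mathbb{D}(0, e^{(\alpha-1)t/2})}$. Any $\lambda$ with $|\lambda| \ne e^{(\alpha-1)t/2}$ then admits a unique preimage $\lambda_0 \in \Gamma_{t,\alpha} \cup 1/\Gamma_{t,\alpha}$, which takes care of the uniqueness clause of the theorem. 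What is left is to verify that this $\lambda_0$ lies in $\mathbb{C}\setminus\overline{\Sigma}_{t,\alpha}$ whenever $\lambda$ is exterior to $\overline{\Omega}_{t,\alpha}$.

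This last verification is the main obstacle, and the approach I would take is a Jordan-curve/degree argument. Lemma \ref{Jordan} already provides that $f_{t,\alpha}$ restricts to a homeomorphism from $F_{t,\alpha} = \partial \Sigma_{t,\alpha}$ onto $\partial \Omega_{t,\alpha}$. Moreover, $f_{t,\alpha}$ is holomorphic on $\mathbb{C}\setminus\{1\} \supset \mathbb{C}\setminus\overline{\Sigma}_{t,\alpha}$ (since $1 \in \Sigma_{t,\alpha}$ by Proposition \ref{boundary}) and satisfies $|f_{t,\alpha}(z)| \sim |z|e^{-t/2}$ as $|z|\to\infty$, hence is proper on the complement of $\overline{\Sigma}_{t,\alpha}$. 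Coupling these properties with the injectivity supplied by Remark \ref{homeomorphism} on $\Gamma_{t,\alpha}\cup 1/\Gamma_{t,\alpha}$ and with the boundary correspondence of Lemma \ref{Jordan}, a connectedness argument (or a degree count near infinity, where $f_{t,\alpha}$ behaves like a dilation) forces the image of $\mathbb{C}\setminus\overline{\Sigma}_{t,\alpha}$ to coincide with the unbounded component $\mathbb{C}\setminus\overline{\Omega}_{t,\alpha}$. Equivalently, the preimage of any $\lambda \notin \overline{\Omega}_{t,\alpha}$ through the two inverse branches of Remark \ref{homeomorphism} lands in $\mathbb{C}\setminus\overline{\Sigma}_{t,\alpha}$.

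Finally, the exceptional case $|\lambda| = e^{(\alpha-1)t/2}$ is dispatched by continuity and compactness: approximate $\lambda$ by a sequence $\lambda_n \notin \overline{\Omega}_{t,\alpha}$ with $|\lambda_n| \ne e^{(\alpha-1)t/2}$, note that the corresponding preimages $\lambda_{0,n}$ remain in a compact subset of $\mathbb{C}\setminus\overline{\Sigma}_{t,\alpha}$ by properness, and extract a convergent subsequence whose limit supplies the desired $\lambda_0$. Uniqueness is lost in this case precisely because the limit point may lie simultaneously on $\partial \Gamma_{t,\alpha}$ and $\partial(1/\Gamma_{t,\alpha})$, both of which are mapped by $f_{t,\alpha}$ onto the circle $\mathbb{T}(0, e^{(\alpha-1)t/2})$.
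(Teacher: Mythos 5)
Your first two paragraphs match the paper's strategy: reduce to the $x_0=0$ characteristics $\lambda(u)=f_{u,\alpha}(\lambda_0)$ via Proposition \ref{boundary}, and get existence/uniqueness of the preimage $\lambda_0=f_{t,\alpha}^{-1}(\lambda)\in\Gamma_{t,\alpha}\cup 1/\Gamma_{t,\alpha}$ from Remark \ref{homeomorphism}. But the step you yourself identify as the main obstacle --- showing that this $\lambda_0$ avoids $\overline{\Sigma}_{t,\alpha}$ whenever $\lambda\notin\overline{\Omega}_{t,\alpha}$ --- is asserted rather than proved, and the justification you offer does not hold up. First, $f_{t,\alpha}$ is \emph{not} proper as a map from $\mathbb{C}\setminus\overline{\Sigma}_{t,\alpha}$ to $\mathbb{C}$: exterior points close to $\partial\Sigma_{t,\alpha}$ map close to $\partial\Omega_{t,\alpha}$, so the preimage of a compact set meeting $\partial\Omega_{t,\alpha}$ is not relatively compact in the exterior; properness onto the image would require already knowing that the image avoids $\partial\Omega_{t,\alpha}$, which is essentially the statement being proved. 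Second, the claim that ``connectedness or a degree count near infinity forces $f_{t,\alpha}(\mathbb{C}\setminus\overline{\Sigma}_{t,\alpha})=\mathbb{C}\setminus\overline{\Omega}_{t,\alpha}$'' is a non sequitur from the facts you list. What a correct degree argument gives (winding number $1$ of $f_{t,\alpha}$ on a large circle around any fixed $\lambda$, minus winding number $0$ of the Jordan curve $\partial\Omega_{t,\alpha}=f_{t,\alpha}(\partial\Sigma_{t,\alpha})$ around $\lambda\notin\overline{\Omega}_{t,\alpha}$, applied on $\{|z|<R\}\setminus\overline{\Sigma}_{t,\alpha}$, where $f_{t,\alpha}$ is holomorphic since $1\in\Sigma_{t,\alpha}$) is that \emph{some} preimage of $\lambda$, in fact exactly one, lies outside $\overline{\Sigma}_{t,\alpha}$. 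That already needs to be carried out explicitly, and ruling out that exterior points map \emph{into} $\Omega_{t,\alpha}$ requires in addition an orientation determination for $f_{t,\alpha}\circ\partial\Sigma_{t,\alpha}$, which neither connectedness nor the behaviour at infinity supplies; recall also that $f_{t,\alpha}$ has an essential singularity at $1\in\Sigma_{t,\alpha}$, so no global ``boundary-to-boundary implies interior-to-interior'' principle is available.

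More importantly, even the corrected degree count does not identify the exterior preimage with the branch preimage $\lambda_0\in\Gamma_{t,\alpha}\cup 1/\Gamma_{t,\alpha}$ of Remark \ref{homeomorphism}, nor does it exclude that this branch preimage sits inside $\overline{\Sigma}_{t,\alpha}$ while a different preimage accounts for the exterior one; $f_{t,\alpha}$ is not globally injective off $\overline{\Sigma}_{t,\alpha}\cup\Gamma_{t,\alpha}\cup 1/\Gamma_{t,\alpha}$, so this identification is precisely the content that must be established to get the ``moreover'' clause. This is where the paper invests its effort: for $|\lambda|\le e^{(\alpha-1)t/2}$ it analyses the Jordan curve $f_{t,\alpha}^{-1}(\mathcal{C}(\lambda))\subset\overline{\Gamma}_{t,\alpha}$, bounds its intersection with $\partial\Sigma_{t,\alpha}$ by two points and runs a case-by-case positional argument using $1\in\Sigma_{t,\alpha}$ and $f_{t,\alpha}(0)=0$; for $|\lambda|\ge e^{(\alpha-1)t/2}$ it proves the explicit disjointness $\partial\Sigma_{t,\alpha}\cap(1/\partial\Gamma_{t,\alpha})=\emptyset$ by comparing the two parametrizations in the M\"obius coordinate, which is the sole purpose of the technical Lemmas \ref{deriv} and \ref{numerator}. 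Your proposal contains no substitute for either of these steps, so the core of the theorem remains unproved. (Your closing limiting argument for $|\lambda|=e^{(\alpha-1)t/2}$ is salvageable once the main step is in place --- boundedness of the preimages follows from $|f_{t,\alpha}(z)|\to\infty$ as $|z|\to\infty$ rather than from properness, and a limit point cannot lie on $\partial\Sigma_{t,\alpha}$ because it would then map into $\partial\Omega_{t,\alpha}\not\ni\lambda$ --- but as written it, too, leans on the unproved properness claim.)
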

We will need the following elementary lemma.
\begin{lem}\label{deriv}
For any $t>0$, the map
\begin{equation*}
g_{t}:\alpha \mapsto   2e^t-(1+(1-\alpha) t)e^{\alpha t}
\end{equation*}
is positive on $[0, 1]$.
\end{lem}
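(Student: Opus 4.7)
The plan is to show that $g_t$ is decreasing on $[0,1]$ and evaluate at $\alpha=1$. Computing the derivative with respect to $\alpha$:
\begin{align*}
g_t'(\alpha) &= t e^{\alpha t} - t\bigl(1+(1-\alpha)t\bigr)e^{\alpha t} = -t^{2}(1-\alpha)e^{\alpha t}.
\end{align*}
Since $t>0$ and $\alpha\in[0,1]$, we have $g_t'(\alpha)\le 0$, with equality only at $\alpha=1$. Therefore $g_t$ is non-increasing (in fact strictly decreasing on $[0,1)$) as a function of $\alpha$.

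Consequently, for every $\alpha\in[0,1]$,
\begin{equation*}
g_t(\alpha)\ge g_t(1)=2e^{t}-e^{t}=e^{t}>0,
\end{equation*}
which gives the positivity claimed in the lemma. As a sanity check, at the other endpoint one has $g_t(0)=2e^{t}-(1+t)$, which is positive because $e^{t}>1+t$ for $t>0$, so in particular $2e^{t}>2+2t>1+t$; this is consistent with the fact that the minimum is attained at $\alpha=1$.

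There is really no substantive obstacle here: the only observation needed is the fortunate cancellation $t-t(1+(1-\alpha)t)=-t^2(1-\alpha)$ in the derivative, after which monotonicity and the evaluation at $\alpha=1$ finish the argument immediately.
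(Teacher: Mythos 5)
Your proof is correct and is essentially identical to the paper's argument: both compute $g_t'(\alpha)=-t^2(1-\alpha)e^{\alpha t}\le 0$, conclude monotonicity, and bound $g_t(\alpha)\ge g_t(1)=e^t>0$. The extra endpoint check at $\alpha=0$ is harmless but not needed.
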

\begin{proof}
We have
\begin{equation*}
g_t'(\alpha)=-(1-\alpha)t^2e^{\alpha t}\le0.
\end{equation*}
Then, $g_t(\alpha)\ge g_t(1)=e^t> 0.$
\end{proof}
From this one can deduce.
\begin{lem}\label{numerator}
For any $t>0$ and $\alpha \in (0,1)$, the map
\begin{equation*}
\psi_{t,\alpha}:x \mapsto x-(x+1-\alpha)e^{2t(1-\alpha)} + (1-\alpha)e^{t(x+2(1-\alpha))}
\end{equation*}
is positive on $[2(\alpha-1), \alpha-1]$.
\end{lem}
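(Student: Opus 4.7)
The plan is to show that $\psi_{t,\alpha}$ is strictly convex on $\mathbb{R}$ and strictly decreasing on the target interval $[2(\alpha-1), \alpha-1]$; this reduces the positivity question to evaluating $\psi_{t,\alpha}$ at the right endpoint $x = \alpha-1$, where a direct computation gives a manifestly positive value. It is worth noting up front that convexity alone, even combined with positivity at both endpoints, is not enough to force positivity on the whole interval, so monotonicity really must be established.

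First I would differentiate twice to obtain
\begin{equation*}
\psi_{t,\alpha}''(x) = (1-\alpha)\, t^2\, e^{t(x+2(1-\alpha))} > 0,
\end{equation*}
so that $\psi_{t,\alpha}$ is strictly convex and $\psi_{t,\alpha}'$ is strictly increasing. By this monotonicity of $\psi_{t,\alpha}'$, the inequality $\psi_{t,\alpha}' < 0$ on the whole interval will follow from the single evaluation $\psi_{t,\alpha}'(\alpha-1) < 0$. Setting $u := (1-\alpha)\,t \in (0,t)$, a direct substitution gives
\begin{equation*}
\psi_{t,\alpha}'(\alpha-1) = 1 - e^{2u} + u e^u,
\end{equation*}
so the problem reduces to proving $e^{2u} - u e^u - 1 > 0$ for $u > 0$.

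This last inequality is precisely where Lemma \ref{deriv} enters. The function $u \mapsto e^{2u} - u e^u - 1$ vanishes at $u=0$ and has derivative $e^u(2e^u - 1 - u)$; the positivity of the bracket $2e^u - 1 - u$ is equivalent, via multiplication by $e^{-t}$ and the substitution $u = (1-\alpha)t$, to $g_t(\alpha) > 0$, which is exactly the content of Lemma \ref{deriv}. Therefore $\psi_{t,\alpha}$ is strictly decreasing on $[2(\alpha-1), \alpha-1]$, and the minimum on the interval is attained at the right endpoint:
\begin{equation*}
\psi_{t,\alpha}(\alpha-1) = (\alpha-1) + (1-\alpha)\, e^{t(1-\alpha)} = (1-\alpha)\bigl(e^{t(1-\alpha)} - 1\bigr) > 0,
\end{equation*}
which completes the argument. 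The only mildly non-routine step is recognizing that convexity must be upgraded to monotonicity on the interval, which is precisely what Lemma \ref{deriv} supplies.
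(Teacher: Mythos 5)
Your proof is correct, and its skeleton is the same as the paper's: compute $\psi_{t,\alpha}''>0$, deduce that $\psi_{t,\alpha}'$ is increasing so everything hinges on $\psi_{t,\alpha}'(\alpha-1)<0$, conclude that $\psi_{t,\alpha}$ is decreasing on $[2(\alpha-1),\alpha-1]$, and evaluate at the right endpoint, where $\psi_{t,\alpha}(\alpha-1)=(1-\alpha)\bigl(e^{t(1-\alpha)}-1\bigr)>0$. The only genuine divergence is how the key inequality $\psi_{t,\alpha}'(\alpha-1)<0$ is obtained: the paper differentiates $\psi_{t,\alpha}'(\alpha-1)$ with respect to $\alpha$, invokes Lemma \ref{deriv} to get monotonicity in $\alpha$, and compares with the limiting value $\psi_{t,1}'(0)=0$, whereas you substitute $u=(1-\alpha)t$ to reduce it to the one-variable inequality $e^{2u}-ue^{u}-1>0$ for $u>0$, which follows from $2e^{u}>1+u$. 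Your route is more self-contained and arguably cleaner: it bypasses the two-parameter monotonicity argument entirely, and it makes transparent that Lemma \ref{deriv} is itself nothing more than the elementary bound $2e^{u}>1+u$ in disguise (your phrase ``multiplication by $e^{-t}$'' involves an additional positive factor $e^{u}$ to land exactly on $g_t(\alpha)>0$, but the equivalence is valid). The paper's version, on the other hand, reuses Lemma \ref{deriv} as stated, which is why the authors isolated it. Your observation that convexity plus positivity at both endpoints would not suffice, so monotonicity is genuinely needed, is a correct and worthwhile remark.
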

\begin{proof}
Differentiating twice the function $\psi_{t,\alpha}$, we get
\begin{equation*}
\psi_{t,\alpha}''(x)=(1-\alpha)t^2e^{ t(x+2(1-\alpha))}>0.
\end{equation*}
Then, $\psi_{t,\alpha}'$ is increasing on $[2(\alpha-1), \alpha-1]$ and in particular,
\begin{equation*}
\psi_{t,\alpha}'(x)\le\psi_{t,\alpha}'(\alpha-1)
\end{equation*}
for any $x\in[2(\alpha-1), \alpha-1]$.
Next, differentiating $\psi_{t,\alpha}'(\alpha-1)$ with respect to $\alpha$, we readily get by Lemma \ref{deriv}
\begin{equation*}
\partial_\alpha \psi_{t,\alpha}'(\alpha-1)=tg_t(\alpha)e^{t(1-2\alpha)}>0.
\end{equation*}
Thus, for any $\alpha\in(0,1)$, we have 
\begin{equation*}
\psi_{t,\alpha}'(x)\le\psi_{t,\alpha}'(\alpha-1)<\psi_{t,1}'(0)=0.
\end{equation*}
and then $\psi_{t,\alpha}$ is decreasing on  $[2(\alpha-1), \alpha-1]$.
Hence, 
\begin{equation*}
\psi_{t,\alpha}(x)\ge\psi_{t,\alpha}(\alpha-1)=(1-\alpha)e^{-\alpha t}(e^{ t}-e^{\alpha t})>0,
\end{equation*}
for any $x\in[2(\alpha-1), \alpha-1]$.
\end{proof}

We are now ready for the proof our surjectivity result.
\begin{proof}[Proof of Theorem \ref{surj-outside}]
Fix $\lambda \notin \overline{\Omega}_{t,\alpha}$. Then, the second statement of the proposition follows readily from Remark \ref{homeomorphism}. Moreover, if $|\lambda| = e^{(\alpha-1)t/2}$ then the same remark shows that there are two inverse images of 
$\lambda$ by $f_{t,\alpha}$ and lying respectively on the boundaries of $\Gamma_{t,\alpha}$ and of $1/\Gamma_{t,\alpha}$. 
As a matter of fact, it only remains to prove that if $\lambda = f_{t,\alpha}(\lambda_0) \notin \overline{\Omega}_{t,\alpha}$ then $\lambda_0 \notin \overline{\Sigma}_{t,\alpha}$.

To this end, consider the circle $\mathcal{C}(\lambda)$ of radius $|\lambda|$ and centered at the origin. If $|\lambda| \leq e^{(\alpha-1)t/2}$ then $f_{t,\alpha}^{-1}\left(\mathcal{C}(\lambda)\right)$ is a Jordan curve 
$\mathcal{C}_{t,\alpha}(\lambda_0) \subset \overline{\Gamma}_{t,\alpha}$ which intersects $\partial\Sigma_{t,\alpha}$ in no more than two points. The last fact is readily seen by solving the equation: 
\begin{equation*}
|f_{t,\alpha}(z)| = \frac{\alpha |z|}{\alpha |z| + (1-\alpha) |1-z|} = |\lambda|, \quad z \in \partial\Sigma_{t,\alpha},
\end{equation*}
and by mimicking the proof of the injectivity of $f_{t,\alpha}$ on $\partial\Sigma_{t,\alpha}$. Moreover, since $f_{t,\alpha}$ is one-to-one on $\mathcal{C}_{t,\alpha}(\lambda_0)$ and on $\partial\Sigma_{t,\alpha}$ then 
\begin{equation*}
f_{t,\alpha}\left(\mathcal{C}_{t,\alpha}(\lambda_0) \cap \partial\Sigma_{t,\alpha}\right) = \mathcal{C}(\lambda) \cap \partial\Omega_{t,\alpha}, 
\end{equation*}
which shows that the sets $\mathcal{C}_{t,\alpha}(\lambda_0) \cap \partial\Sigma_{t,\alpha}$ and $\mathcal{C}(\lambda) \cap \partial\Omega_{t,\alpha}$ have the same cardinality. Another property we shall need below is that $f_{t,\alpha}^{-1}$ maps the interior of $\mathcal{C}(\lambda)$ onto the interior of $\mathcal{C}_{t,\alpha}(\lambda_0)$. This follows from the homeomorphism property which preserves simple-connectedness and from $f_{t,\alpha}(0) = 0$. 
\begin{itemize}
\item In the case of empty intersection, we have either  $\overline{\Omega}_{t,\alpha}$ lies entirely in the open disc  $\mathbb{D}(0,|\lambda|)$, and  this cannot happen since
\begin{equation*}
f_{t,\alpha}^{-1} (\partial\Omega_{t,\alpha}) \subset f_{t,\alpha}^{-1}\left(\mathbb{D}(0,|\lambda|)\right) \subset \Gamma_{t,\alpha} \subset \mathbb{D}(0,1),
\end{equation*}
then, $f_{t,\alpha}^{-1} (\partial\Omega_{t,\alpha}) \neq \partial \Sigma_{t,\alpha}$ since $z=1 \in \Sigma_{t,\alpha}$. 
Or  $\overline{\Omega}_{t,\alpha}\cap\overline{\mathbb{D}(0,|\lambda|)}=\emptyset$, 
then $\mathcal{C}_{t,\alpha}(\lambda_0)$ cannot lie inside $ \Sigma_{t,\alpha}$ since otherwise $0\in\Sigma_{t,\alpha}$ and then $0\in\Omega_{t,\alpha}$. Contradiction since   $\mathcal{C}(\lambda)$ lies outside $\overline{\Omega}_{t,\alpha}$.

\item The same arguments apply to the case of a single (necessary real) intersection point since the positions of the curves $\mathcal{C}(\lambda)$ and $\partial \Omega_{t,\alpha}$ are similar. 

\item Assume now that $\mathcal{C}(\lambda) \cap \partial \Omega_{t,\alpha}$ consists of two (complex-conjugate) distinct points. If $\partial \Omega_{t,\alpha}$ lies outside  $\mathbb{D}(0,|\lambda|)$, then $\lambda_0 \notin \overline{\Sigma}_{t,\alpha}^c$ by the same arguments used above. Otherwise,
$\partial \Omega_{t,\alpha}$ divides $\mathcal{C}(\lambda)$ into two Jordan domains. 
By discussing whether zero is inside or outside $\Omega_{t,\alpha}$ and using the fact that $f_{t,\alpha}^{-1}$ is a homeomorphism on $\mathcal{C}(\lambda)$ with $f_{t,\alpha}^{-1}(0) = 0$, we deduce that  $\partial \Sigma_{t,\alpha}$ divides $\mathcal{C}_{t,\alpha}(\lambda_0)$ into two Jordan domains and that the part of $\partial \Sigma_{t,\alpha}$ lying inside $\mathcal{C}_{t,\alpha}(\lambda_0)$ is mapped under $f_{t,\alpha}$ onto the part of $\partial \Omega_{t,\alpha}$ lying inside $\mathcal{C}(\lambda)$ and the same holds for the complementary parts lying outside $\mathcal{C}_{t,\alpha}(\lambda_0)$ and $\mathcal{C}(\lambda)$ respectively, then $\lambda_0 \notin \overline{\Sigma}_{t,\alpha}$.

Finally, let $|\lambda| \geq e^{(\alpha-1)t/2}$ and recall that $\lambda_0 \in 1/\overline{\Gamma}_{t,\alpha}$. Then we shall prove that for any $\alpha \in (0,1)$,
\begin{equation}\label{EmptyInter}
\partial \Sigma_{t,\alpha} \cap \left(1/\partial \Gamma_{t,\alpha}\right) = \emptyset. 
\end{equation}
To this end, recall from \cite{Biane1} that  
\begin{equation*}
1/\partial \Gamma_{t,\alpha} \subset \left\{z,  |f_{\alpha t,1}(1/z)|= 1\right\}
\end{equation*}
and that the image of this Jordan curve under the map $z \mapsto \chi = (1+z)/(1-z)$ lies in the left half-plane $\{\Re(\chi) \leq 0\}$ and circles $\chi = -1$. If 
\begin{equation*}
w = \frac{2\alpha - 1+z}{1-z} 
\end{equation*}
then 
\begin{equation*}
w = \alpha \chi + \alpha-1,
\end{equation*}
so that the image of $1/\partial \Gamma_{t,\alpha}$ under the M\"obius transformation $z \mapsto w$  lies in the left half-plane $\{\Re(w) \leq \alpha-1\}$. But if 
\begin{equation*}
w= x+iy \in 1/\partial \Gamma_{t,\alpha},
\end{equation*}
then 
\begin{equation} \label{Gamm}
\frac{(x+1-2\alpha)^2 + y^2}{(x+1)^2 + y^2} = e^{-t(x+2(1-\alpha))}.
\end{equation}
Since the right-hand side is a positive real less than one then $2(\alpha-1) < x \leq \alpha-1$. Now, the proof of Lemma \ref{Jordan} shows that the curve $\partial \Sigma_{t,\alpha}$ is parametrized by: 
\begin{equation}
y^2_{\Sigma} = \frac{(x+1)^2-(x^2+2x(1-2\alpha)+1)e^{tx}}{e^{tx}-1}, 
\end{equation}
while \eqref{Gamm} shows that the image of $1/\partial \Gamma_{t,\alpha}$ under the M\"obius transformation $z \mapsto w$ is parametrized by: 
\begin{equation*}
y^2_{1/\Gamma} = \frac{(x+1)^2-(x+1-2\alpha)^2e^{t(x+2(1-\alpha))}}{e^{t(x+2(1-\alpha))}-1}. 
\end{equation*}
Writing $x^2+2x(1-2\alpha)+1 =  (x+1-2\alpha)^2 + 4\alpha(1-\alpha)$, we readily derive: 
\begin{align*}
y^2_{\Sigma} - y^2_{1/\Gamma} & = \frac{-4\alpha x e^{tx} + 4\alpha(x+1-\alpha) e^{t(x+2(1-\alpha))}-4\alpha(1-\alpha)e^{tx}e^{t(x+2(1-\alpha))}}{(e^{tx}-1)(e^{t(x+2(1-\alpha))}-1)}
\\& = -4\alpha e^{tx}\frac{x-(x+1-\alpha)e^{2t(1-\alpha)} + (1-\alpha)e^{t(x+2(1-\alpha))}}{(e^{tx}-1)(e^{t(x+2(1-\alpha))}-1)}
\\&=\frac{-4\alpha e^{tx}\psi_{t,\alpha}(x)}{(e^{tx}-1)(e^{t(x+2(1-\alpha))}-1)}. 
\end{align*}
The denominator is negative for any $2(\alpha-1) < x \leq \alpha-1$. Besides, by Lemma \ref{numerator}, the numerator is negative on $[2(\alpha-1) ,\alpha-1]$ and in turn $y^2_{\Sigma} > y^2_{\Gamma}$. Consequently, \eqref{EmptyInter} holds so that if $|\lambda| \geq e^{(\alpha-1)t}$ then $\lambda_0 \notin \overline{\Sigma}_{t,\alpha}$.
\end{itemize}
\end{proof}  

We now work toward finding the expression of $s_{t,\alpha}(\lambda)$ for $\lambda$ outside $\overline{\Omega}_{t,\alpha}$. Since $s_{t,\alpha}$ is defined as the limit of $S(t,\lambda,x)$ as $x$ tends to zero with $\lambda$ fixed, we then wish to use the expression \eqref{S} of $S$ along curves $(t,\lambda(t),x(t))$. However, there are to difficulties with this argument: the first one is that the PDE for $S$ holds only when $x>0$, so we are not allowed to simply set $x_0=0$ in the formula \eqref{S}. The other difficulty is that when $x_0\rightarrow 0^+$, $\lambda(t)$ is not fixed since it depends on $x_0$.
In order to overcome these two difficulties, we will show that $S$ has a continuous extension to a neighborhood of $(t,\lambda,0)$ in the variables $\lambda$ and $x$ (it is for this reason that we have allowed $x_0$ to be slightly negative).
To that end, we consider the map
\begin{equation*}
V_{t,\alpha}(\lambda_0,x_0)=\left( \lambda_{t,\alpha}(\lambda_0,x_0),x_{t,\alpha}(\lambda_0,x_0) \right).
\end{equation*}
The domain of $V_{t,\alpha}$ consists of pairs $(\lambda_0,x_0)$ such that $\lambda_0\in\left(\overline{\Gamma}_{t,\alpha}\cup1/\Gamma_{t,\alpha}\right)\cap\overline{\Sigma}_{t,\alpha}^c$ and 
\begin{equation*}
p_x(0)=\tau(q_0)=\frac{\alpha}{x_0+|1-\lambda_0|^2}+\frac{1-\alpha}{x_0+|\lambda_0|^2}
\end{equation*}
 is positive. We note that under the last condition, $V_{t,\alpha}$ is well defined even when $x_0$ is slightly negative (see Remark \ref{x0negative}).

\begin{lem}\label{regularity}
If $\lambda_0\in\overline{\Gamma}_{t,\alpha}\cup1/\Gamma_{t,\alpha}$ is not in $\overline{\Sigma}_{t,\alpha}$, the Jacobin matrix of $V_{t,\alpha}$ at $(\lambda_0,0)$ is invertible. 
\end{lem}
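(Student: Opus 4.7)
The plan is to compute the real Jacobian of $V_{t,\alpha}$ at $(\lambda_0,0)$ explicitly and show that it factors as a product of two nonzero scalars. Writing $\lambda=a+ib$, view $V_{t,\alpha}$ as a map $\mathbb{R}^3\to\mathbb{R}^3$ sending $(a_0,b_0,x_0)$ to $(a(t),b(t),x(t))$.

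\textbf{Step 1: Block triangular structure.} From the conservation law $x(u)=x_0(1-p_x(0)\int_0^u|\lambda(s)|^2\,ds)^2$, the whole curve $x(\cdot)$ vanishes identically when $x_0=0$, so $x(t,\lambda_0,0)\equiv 0$ as a function of $\lambda_0$. Hence the two entries $\partial_{a_0}x(t)$ and $\partial_{b_0}x(t)$ of the bottom row vanish at $(\lambda_0,0)$, making the Jacobian block upper triangular. From Remark~\ref{x0negative} (or directly from Proposition~\ref{boundary}) the map $(a_0,b_0)\mapsto\lambda(t,\lambda_0,0)$ coincides with $f_{t,\alpha}$, so the $2\times 2$ upper-left block is the real Jacobian of the holomorphic map $f_{t,\alpha}$, with determinant $|f'_{t,\alpha}(\lambda_0)|^2$ by Cauchy--Riemann. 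Differentiating the product formula for $x(u)$ gives the $(3,3)$ entry
\begin{equation*}
\partial_{x_0}x(t)\bigm|_{x_0=0} = \left(1 - p_x(0)\int_0^t |\lambda(s)|^2\,ds\right)^{\!2}\bigg|_{x_0=0},
\end{equation*}
so the Jacobian determinant factors as $|f'_{t,\alpha}(\lambda_0)|^2\cdot\partial_{x_0}x(t)|_{x_0=0}$.

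\textbf{Step 2: Nonvanishing of the $(3,3)$ entry.} Along the $x_0=0$ trajectory, $|\lambda(s)|=|\lambda_0|e^{\dot v(0)s}$, and the very definition of $T_\alpha$ is the time at which $p_x(0)\int_0^u|\lambda|^2\,ds$ reaches $1$. Since $\overline{\Sigma}_{t,\alpha}=\{T_\alpha\le t\}$ by Proposition~\ref{boundary} (combined with the real-analyticity of $T_\alpha$), the hypothesis $\lambda_0\notin\overline{\Sigma}_{t,\alpha}$ gives $T_\alpha(\lambda_0)>t$, which forces the parenthesis above to be strictly positive. Note this step also uses $\lambda_0\neq 1$, automatic since $1\in\Sigma_{t,\alpha}$.

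\textbf{Step 3: Nonvanishing of $f'_{t,\alpha}(\lambda_0)$.} A direct logarithmic differentiation gives
\begin{equation*}
\frac{f'_{t,\alpha}(z)}{f_{t,\alpha}(z)} = \frac{1}{z} + \frac{t\alpha}{(1-z)^2},
\end{equation*}
so the critical points of $f_{t,\alpha}$ are the roots of $(1-z)^2+t\alpha z=0$. For $\lambda_0$ in the open set $\Gamma_{t,\alpha}\cup 1/\Gamma_{t,\alpha}$, $f_{t,\alpha}$ is a biholomorphism by Remark~\ref{homeomorphism}, so $f'_{t,\alpha}(\lambda_0)\neq 0$. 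The remaining case is $\lambda_0\in\partial\Gamma_{t,\alpha}$, which I handle by showing that every critical point $z_c$ of $f_{t,\alpha}$ satisfies $T_\alpha(z_c)\le t$, so that $z_c\in\overline{\Sigma}_{t,\alpha}$ and is excluded by hypothesis. Concretely, the discriminant $t\alpha(t\alpha-4)$ dichotomizes: when $t\alpha<4$ the critical points lie on the unit circle, and when $t\alpha\ge 4$ they are real negative roots $z_\pm$ with $z_-z_+=1$. Using the defining relation $(1-z_c)^2=-t\alpha z_c$ to simplify the expressions for $\dot v(0)(z_c)$ and $|z_c|^2\tau(q_0)(z_c)$ that appear in $T_\alpha(z_c)$, one obtains $T_\alpha(z_c)\le t$ after a short computation.

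\textbf{Main obstacle.} Step~3 is the delicate one: the block-triangular reduction in Step~1 is formal and the sign analysis in Step~2 is essentially the content of Proposition~\ref{boundary}, but excluding critical points requires a small but nontrivial calculation comparing the explicit formula for $T_\alpha$ at a root of $(1-z)^2+t\alpha z=0$ with the critical time $t$ itself. Once that inequality is established, the two factors in $\det J$ are simultaneously nonzero and the lemma follows.
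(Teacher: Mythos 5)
Your Steps 1 and 2 reproduce the paper's argument: since $x_{t,\alpha}(\cdot,0)\equiv 0$, the Jacobian of $V_{t,\alpha}$ at $(\lambda_0,0)$ is block triangular, its upper $2\times 2$ block is the real Jacobian of $f_{t,\alpha}$ (determinant $|f_{t,\alpha}'(\lambda_0)|^2$), and the $(3,3)$ entry is $\bigl(1-p_x(0)\int_0^t|\lambda(s)|^2ds\bigr)^2>0$ precisely because $\lambda_0\notin\overline{\Sigma}_{t,\alpha}$ forces $T_\alpha(\lambda_0)>t$. Where you genuinely differ is the nonvanishing of $f_{t,\alpha}'(\lambda_0)$: the paper disposes of it in one line by quoting the injectivity of $f_{t,\alpha}$ on $\overline{\Gamma}_{t,\alpha}\cup 1/\Gamma_{t,\alpha}$ from Remark \ref{homeomorphism}, whereas you compute the critical points as the roots of $(1-z)^2+t\alpha z=0$ and argue they lie in $\overline{\Sigma}_{t,\alpha}$, hence are excluded by hypothesis. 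Your route has a real merit: injectivity of a holomorphic map yields $f'\neq 0$ only at interior points (think of $z\mapsto z^2$ on a closed half-disc), and the critical points do sit on $\overline{\Gamma}_{t,\alpha}$ (for $t\alpha\le 4$ they lie on the unit circle, where $|f_{\alpha t,1}|=1$), so for $\lambda_0\in\partial\Gamma_{t,\alpha}$ your critical-point analysis is doing work that the paper's citation glosses over; the exclusion really does come from $\lambda_0\notin\overline{\Sigma}_{t,\alpha}$, exactly as you argue.

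The one gap is that the inequality $T_\alpha(z_c)\le t$ is asserted, not proved, and it is only ``short'' when $t\alpha\le 4$: there $|z_c|=1$ and $|1-z_c|^2=t\alpha$, so $T_\alpha(z_c)=\frac{1}{1-\alpha}\log\bigl(1+(1-\alpha)t\bigr)<t$ by $\log(1+x)\le x$. For $t\alpha\ge 4$ the critical points are the negative reals $-\rho$, $-1/\rho$ with $(1+\rho)^2=t\alpha\rho$, and the needed inequality becomes
\[
\frac{1+\rho}{2\alpha-1-\rho}\,\log\frac{\alpha}{\alpha\rho^2+(1-\alpha)(1+\rho)^2}\;\le\;\frac{(1+\rho)^2}{\alpha\rho},
\]
a two-parameter inequality that the elementary bound $\log x\le x-1$ settles only for $\rho\le 1$; for $\rho>1$ it degenerates to an equality as $\alpha\to 1^-$ (e.g.\ $\alpha=0.999$, $\rho=1.1$ gives $0.1943$ versus $0.1949$), so a sharper argument is required there. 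Numerical and limiting checks indicate the claim is true, so your scheme should go through, but as written Step 3 is incomplete at exactly the point you flagged; either supply that estimate or, for interior points at least, fall back on injectivity as the paper does and confine the critical-point computation to $\partial\Gamma_{t,\alpha}$.
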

\begin{proof}
Let $\lambda_0=r_0e^{i\theta_0}$. We note that when $\theta_0$ and $r_0$ vary while $x_0$ remains 0, then $x_{t,\alpha}$ remains equal to zero, so that 
\begin{equation*}
\partial_{\theta_0}x_{t,\alpha}(\lambda_0,0)=\partial_{r_0}x_{t,\alpha}(\lambda_0,0)=0,
\end{equation*}
We note also that when $x_0=0$, we have
\begin{equation*}
\lambda_{t,\alpha}(\lambda_0,x_0)=|\lambda_{t,\alpha}(\lambda_0,0)|e^{i\theta_{t,\alpha}(\lambda_0,0)}=f_{t,\alpha}(\lambda_0).
\end{equation*}
Hence, the Jacobin of $V_{t,\alpha}$ at $(\lambda_0,0)$ takes the following form
\begin{equation*}
\det V_{t,\alpha}'= \left|
            \begin{array}{cc}
                J &    *   \\
            0 &  \partial_{x_0}x_{t,\alpha} \\
            \end{array}
          \right|,
\end{equation*}
where $J$ is the Jacobian matrix of the map $f_{t,\alpha}$.
On the one hand, since $\lambda_0\in\overline{\Sigma}_{t,\alpha}^c$, we have
\begin{equation*}
T_\alpha(\lambda_0)=\frac{1}{p_x(0)}>\int_0^t|\lambda_{s,\alpha}(\lambda_0,0)|^2ds,
\end{equation*}
and hence
\begin{equation*}
\partial_{x_0}x_{t,\alpha}(\lambda_0,0)=\left(1-p_x(0)\int_0^t|\lambda_{s,\alpha}(\lambda_0,0)|^2ds\right)^2>0.
\end{equation*}
On the other hand from Remark \ref{homeomorphism}, the function $f_{t,\alpha}$ is injective on $\overline{\Gamma}_{t,\alpha}\cup1/\Gamma_{t,\alpha}$ so that $f_{t,\alpha}'(\lambda_0)$ is nonzero and hence $J$ is invertible.
Thus, the Jacobin of $V_{t,\alpha}$ at $(\lambda_0,0)$ is nonzero.
\end{proof}

Combining Theorem \ref{expression} and Theorem \ref{surj-outside} together with Lemma \ref{regularity}, we get: 
\begin{cor}\label{outside}
For any $t>0$  and any $\alpha\in (0,1)$, if $\lambda\ne0$ lies outside $\overline{\Omega}_{t,\alpha}$ and $|\lambda| \neq e^{(\alpha-1)t/2}$ then,
\begin{multline*}
s_t( \lambda)
=\alpha \log|1-\lambda_0|^2+(1-\alpha)\log|\lambda_0|^2
\\ +\frac{1}{2}\left(\frac{\alpha }{1-\lambda_0}+\frac{\alpha }{1-\overline{\lambda_0}}-\frac{ \alpha^2}{(1-\lambda_0)^2}-\frac{ \alpha^2}{(1-\overline{\lambda_0})^2}-1 \right)t
+  \log\left|\frac{\lambda}{\lambda_0}\right|,
\end{multline*}
where $\lambda_0=f_{t,\alpha }^{-1}\left(\lambda\right)$.
\end{cor}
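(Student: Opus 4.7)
The plan is to specialize Theorem \ref{expression} to $h = P$ and then pass to the limit $x_0 \to 0^+$ along the characteristic curve whose endpoint at time $t$ is the given $\lambda$. By Theorem \ref{surj-outside}, since $\lambda$ lies outside $\overline{\Omega}_{t,\alpha}$ with $|\lambda| \neq e^{(\alpha-1)t/2}$, there exists a unique $\lambda_0 \notin \overline{\Sigma}_{t,\alpha}$ belonging to $\Gamma_{t,\alpha} \cup 1/\Gamma_{t,\alpha}$ with $f_{t,\alpha}(\lambda_0) = \lambda$; this is the initial condition of the characteristic we will use.

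The first substantive step is to read off the two easy pieces of formula \eqref{S}. Since $P$ has spectrum $\{0,1\}$ with weights $1-\alpha$ and $\alpha$, the spectral theorem gives
\begin{equation*}
\tau(\log(|P-\lambda_0|^2+x_0)) = \alpha\log(|1-\lambda_0|^2+x_0)+(1-\alpha)\log(|\lambda_0|^2+x_0),
\end{equation*}
which, as $x_0 \to 0^+$, tends to $\alpha\log|1-\lambda_0|^2+(1-\alpha)\log|\lambda_0|^2$ (note $\lambda_0 \neq 0, 1$). In parallel, when $x_0 = 0$ the whole characteristic $u \mapsto x(u)$ vanishes and $u \mapsto \lambda(u) = f_{u,\alpha}(\lambda_0)$ by the identity established at the start of the proof of Proposition \ref{boundary}; in particular $\log|\lambda(t)| - \log|\lambda_0| = \log|\lambda/\lambda_0|$.

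To justify that $s_t(\lambda) := \lim_{x \to 0^+} S(t,\lambda,x)$ is obtained by plugging $x_0 = 0$ into \eqref{S}, I would invoke Lemma \ref{regularity}: the map $V_{t,\alpha}$ sending $(\lambda_0,x_0)$ to $(\lambda(t),x(t))$ has invertible Jacobian at $(\lambda_0,0)$, hence by the inverse function theorem is a local diffeomorphism. Allowing $x_0$ to become slightly negative as permitted by Remark \ref{x0negative}, the image of $V_{t,\alpha}$ covers a full two-dimensional neighborhood of $(\lambda,0)$, so that the right-hand side of \eqref{S} supplies a continuous extension of $S$ to this neighborhood. Taking $x \to 0^+$ with $\lambda$ fixed is then equivalent to setting $x_0 = 0$ in \eqref{S}.

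The main and most delicate step is the explicit evaluation of the initial Hamiltonian $H_0$ at $x_0 = 0$ and $h = P$. From the spectrum of $P$ one computes
\begin{equation*}
\tau(q_0)=\frac{\alpha}{|1-\lambda_0|^2}+\frac{1-\alpha}{|\lambda_0|^2}, \qquad \tau(q_0(P-a_0))=\frac{\alpha(1-a_0)}{|1-\lambda_0|^2}-\frac{(1-\alpha)a_0}{|\lambda_0|^2},
\end{equation*}
and substitutes the corresponding values of $p_a(0), p_b(0), p_x(0)$ into the formula for $H_0$ displayed just after Proposition \ref{ConsMot}. I expect the hard part to be the ensuing algebraic simplification: a nontrivial cancellation of the contributions coming from the $0$-eigenspace of $P$ must take place, so that $H_0$ depends only on $1-\lambda_0$. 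Writing $\Re(1/\lambda_0) = a_0/|\lambda_0|^2$, $\Re(1/(1-\lambda_0)) = (1-a_0)/|1-\lambda_0|^2$ and their analogues for the squares, the target identity is
\begin{equation*}
H_0 = \alpha \,\Re\!\left(\frac{1}{1-\lambda_0}\right) - \alpha^2 \,\Re\!\left(\frac{1}{(1-\lambda_0)^2}\right),
\end{equation*}
so that $(H_0 - 1/2)t$ matches exactly the middle term of the claimed formula. Combining this with the two easy pieces computed above and with $\lambda(t) = \lambda$ yields the statement.
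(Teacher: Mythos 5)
Your proposal is correct and follows essentially the same route as the paper: the passage to the limit $x\to0^+$ is handled exactly as in the paper's proof via Lemma \ref{regularity}, Remark \ref{x0negative} and the inverse function theorem, and the remaining ingredients (spectral theorem for $P$, the identity $\lambda(u)=f_{u,\alpha}(\lambda_0)$ when $x_0=0$, Theorem \ref{surj-outside} for locating $\lambda_0$) coincide with the paper's. The one step you leave as a stated target, namely $H_0=\alpha\,\Re\frac{1}{1-\lambda_0}-\alpha^2\,\Re\frac{1}{(1-\lambda_0)^2}$ at $x_0=0$, is indeed true; the paper verifies it slightly more efficiently by writing $H_0=K_1(1-K_1)+\frac{K_2^2}{4}$ with $K_1=\frac12\bigl(1-2\dot v(0)\bigr)$ and $K_2=\frac{2\alpha b_0}{|1-\lambda_0|^2}$, rather than substituting $p_a(0),p_b(0)$ directly into the Hamiltonian as you propose.
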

\begin{proof}
Define $\text{HJ}$ by the right-hand side of the formula \eqref{S}:
\begin{equation*}
\text{HJ}(u,\lambda_0,x_0)=\tau(\log(|P-\lambda_0|^2+x_0))+\left(H_0-\frac{1}{2}\right)u+\log|\lambda_{u,\alpha}(\lambda_0,x_0)|-\log|\lambda_0|
\end{equation*}
so that,
\begin{equation}\label{S1}
S\left(u, \lambda_{u,\alpha}(\lambda_0,x_0),x_{u,\alpha}(\lambda_0,x_0) \right)=\text{HJ}(u,\lambda_0,x_0).
\end{equation}
Then for small and positive $x$, we have
\begin{equation*}
S\left(u, \lambda,x \right)=\text{HJ}\left(u,\big(V_{u,\alpha}^{-1}(\lambda,x)\big)\right).
\end{equation*}
where by definition
\begin{equation*}
 \lambda_{u,\alpha}\big(V_{u,\alpha}^{-1}(\lambda,x)\big)=\lambda \quad {\rm and}\ x_{u,\alpha}\big(V_{u,\alpha}^{-1}(\lambda,x)\big)=x.
\end{equation*}
Now, by the inverse function theorem, if $x$ tends to zero from above with $\lambda$ fixed, then $V_{u,\alpha}^{-1}(\lambda,x)$ tends to $(f_{u,\alpha}^{-1}(\lambda),0)=(\lambda_0,0)$. 
Thus, the function
\begin{equation*}
s_t( \lambda) :=\lim_{x\rightarrow0}S(t,\lambda,x)
\end{equation*}
can be computed by putting $\lambda_{u,\alpha}(\lambda_0,x_0)=\lambda$ and letting $x_0$ tend to zero in the expression  \eqref{S1}:
\begin{align*}
s_t( \lambda) & = \tau[\log\big(|P-\lambda_0|^2\big)] +\left(H_0-\frac{1}{2}\right)t +\log\left|\frac{\lambda}{\lambda_0}\right|.
\end{align*}
By Remark \ref{homeomorphism}, we have $\lambda_0 \in \Gamma_{t,\alpha} \cup 1/\Gamma_{t,\alpha}$, in particular  $\lambda_0 \neq 1$. Besides, we have $\lambda_0 \neq 0$ since $\lambda \neq 0$. Consequently, the spectral Theorem allows to write:  
\begin{equation*}
\tau[\log\big(|P-\lambda_0|^2\big)] = \alpha \log|1-\lambda_0|^2+(1-\alpha)\log|\lambda_0|^2.
\end{equation*}
On the other hand, if $x_0$ tend to zero, the Hamiltonian reduces to: 
\begin{align*}
H_0& =  K_1(1-K_1) + \frac{K_2^2}{4}
\\& = \frac{(a_0p_a(0) + b_0p_b(0))(2-a_0p_a(0)-b_0p_b(0))}{4} + \frac{(a_0p_b(0)-b_0p_a(0))^2}{4}
\\& = \frac{(1-2\dot{v}(0))(1+2\dot{v}(0)) + (a_0p_b(0)-b_0p_a(0))^2}{4}. 
\end{align*}
Using the formulas, 
\begin{equation*}
1-2\dot{v}(0) = 2- 2\alpha - 2\alpha \frac{a_0- |\lambda_0|^2}{|\lambda_0-1|^2},
\end{equation*}
\begin{equation*}
1+2\dot{v}(0) = 2\alpha + 2\alpha \frac{a_0- |\lambda_0|^2}{|\lambda_0-1|^2},
\end{equation*}
\begin{equation*}
\frac{(a_0p_b(0)-b_0p_a(0))^2}{4} = \frac{\alpha^2b_0^2}{|\lambda_0-1|^4}
\end{equation*}
we end up with: 
\begin{align*}
H_0 - \frac{1}{2} = \frac{1}{2}\left(\frac{\alpha }{1-\lambda_0}+\frac{\alpha }{1-\overline{\lambda_0}}-\frac{ \alpha^2}{(1-\lambda_0)^2}-\frac{ \alpha^2}{(1-\overline{\lambda_0})^2} -1 \right).
\end{align*}
Together with Proposition \ref{surj-outside} yield: for any $\lambda \in \mathbb{C} \setminus \overline{\Omega}_{t,\alpha},\lambda\ne0$,
\begin{align*}
s_t( \lambda) &  = \alpha \log|1-\lambda_0|^2+(1-\alpha)\log|\lambda_0|^2 +\frac{1}{2}\left(\frac{\alpha }{1-\lambda_0}+\frac{\alpha }{1-\overline{\lambda_0}}-\frac{ \alpha^2}{(1-\lambda_0)^2}-\frac{ \alpha^2}{(1-\overline{\lambda_0})^2}-1 \right)t
\\& +\log\left|\frac{\lambda}{\lambda_0}\right|.
\end{align*}
\end{proof}
We are now ready to prove our main result which asserts that $\Delta s_{t}(\lambda)=0$ in distributional sense for $\lambda \in \mathbb{C} \setminus \{\overline{\Omega}_{t,\alpha}\cup\{0\}\}$.

\begin{proof}[Proof of Theorem \ref{support}]
From Proposition \ref{outside}, the function
\begin{equation*}
\lambda \mapsto s_t( \lambda) - (1-\alpha)\log|\lambda_0|^2 - \alpha\log|1-\lambda_0|^2 
\end{equation*}
 is the real part of a holomorphic function in $\mathbb{C} \setminus \{\overline{\Omega}_{t,\alpha}\cup\{0\}\}$, and is therefore harmonic there. Morever, since $\lambda_0 \notin \{0,1\}$ then the linear combination
 \begin{equation*}
\lambda \mapsto (1-\alpha)\log|\lambda_0|^2 + \alpha\log|1-\lambda_0|^2
\end{equation*}
is also harmonic in the same domain. Since the circle of radius $e^{(\alpha-1)t/2}$ has (two-dimensional) zero Lebesgue measure, then the Theorem is proved. 
 \end{proof}

\begin{rem}
When $\alpha=1$, the region  $\Omega_{t,1}$ becomes the closed unit disc since the boundary of $\Sigma_{t,1}$ maps to the unit circle under $f_{t,1}$. In particular, $\Omega_{t,1}$ contains the support of $Y_t$. On the other hand, if $|\lambda| > 1 > e^{(\alpha-1)t}$ then $\lambda_0 \in 1/\Gamma_{t,\alpha}$ and $\Delta s_{t}(\lambda)=0$ in the strong sense. This is in agreement with the general theory since the spectrum of $Y_tP$ is contained in the closed unit disc.  
\end{rem}

\begin{figure}
\begin{center}
\includegraphics[width=7cm]{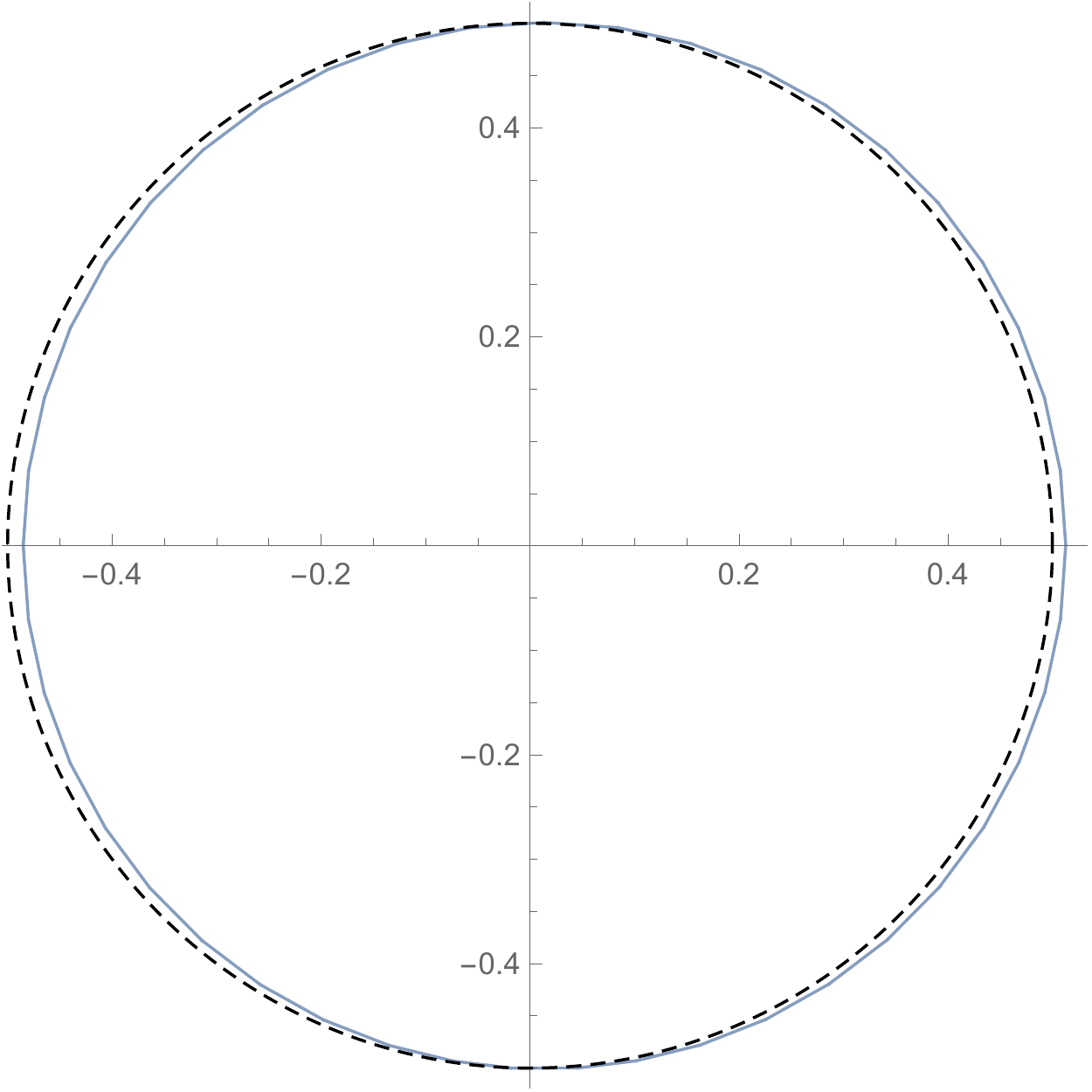} 
\end{center}
\caption{The region $\Omega_{t,\alpha}$ for $(t,\alpha)=(8,0.25)$ with the circle $C(0,0.5)$ (dashed) indicated for comparison}.
\end{figure}

We close the paper by the following result showing that $\partial\Omega_{t,\alpha}$ approaches the circle $\mathbb{T}(0,\sqrt{\alpha})$ when $t$ approaches infinity. This in agreement with Haagerup and Larsen result on the Brown measure of $PUP$.
\begin{pro}
For fixed $t>0$ and $\alpha\in(0,1)$, let $z_{t,\alpha}$ denote a boundary point of $\Sigma_{t,\alpha}$. Then, 
\begin{equation*}
\lim_{t\rightarrow\infty}|f_{t,\alpha}(z_{t,\alpha})|=\sqrt{\alpha}.
\end{equation*}
\end{pro}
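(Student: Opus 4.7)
The plan is to derive a closed-form expression for $|f_{t,\alpha}(z_{t,\alpha})|^2$ on the boundary $F_{t,\alpha}$ that depends only on $x = \mathrm{Re}(w)$, where $w = (2\alpha-1+z_{t,\alpha})/(1-z_{t,\alpha})$ is the M\"obius coordinate used in Lemma~\ref{Jordan}, and then show that the admissible range of $x$ collapses to the single point $\{-1\}$ as $t\to\infty$. Since $|w+1-2\alpha| = 2\alpha|z|/|1-z|$, setting $A := (x+1-2\alpha)^2+y^2 = 4\alpha^2|z|^2/|1-z|^2$, the defining identity of $F_{t,\alpha}$ rewrites as
\[
|f_{t,\alpha}(z_{t,\alpha})|^2 = \frac{A}{A+4\alpha(1-\alpha)}.
\]

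Combining the boundary parametrization $y^2(e^{tx}-1) = \phi_{t,\alpha}(x)$ from the proof of Lemma~\ref{Jordan} with the elementary identity $(x+1)^2 - (x+1-2\alpha)^2 = 4\alpha(x+1-\alpha)$, a short algebraic manipulation gives
\[
A + 4\alpha(1-\alpha) = \frac{4\alpha x}{e^{tx}-1},
\]
and hence, on $F_{t,\alpha}$,
\[
|f_{t,\alpha}(z_{t,\alpha})|^2 = 1 - \frac{(1-\alpha)(e^{tx}-1)}{x}.
\]
The noteworthy feature of this formula is that the dependence on $y$ has disappeared, reducing the question to tracking the single real parameter $x$.

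For $t > 4\alpha$ (which holds for all sufficiently large $t$), Lemma~\ref{Jordan} shows that admissible $x$ lies in $[x_{t,\alpha}^-, \tilde{x}_{t,\alpha}^-]$, both endpoints being roots of $\phi_{t,\alpha}(x) = (x+1)^2 - (x^2+2x(1-2\alpha)+1)e^{tx}$. Since $e^{tx}\to 0$ for any fixed $x<0$, we have $\phi_{t,\alpha}(x)\to(x+1)^2>0$ uniformly on compact subsets of $(-\infty,0)\setminus\{-1\}$, and $\phi_{t,\alpha}(x)\to-\infty$ for any fixed $x>0$, while $\phi_{t,\alpha}(-1) = -4\alpha e^{-t}\to 0$. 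Thus both roots are forced to converge to $-1$; indeed, a linearization $\phi_{t,\alpha}(-1+\eta)\approx \eta^2 - 4\alpha e^{-t}$ gives the sharper bound $|x+1| = O(e^{-t/2})$. As $x\to -1$ we have $(e^{tx}-1)/x \to (0-1)/(-1) = 1$, so the displayed formula yields
\[
|f_{t,\alpha}(z_{t,\alpha})|^2 \longrightarrow 1-(1-\alpha) = \alpha,
\]
uniformly over boundary points, proving the proposition.

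The main obstacle is discovering the $y$-free simplification of $|f_{t,\alpha}(z_{t,\alpha})|^2$; once it is in hand the limit comes almost for free, because the exponential dominance of $e^{tx}$ combined with the vanishing of $\phi_{t,\alpha}(-1)$ forces the entire boundary curve in the $w$-plane to concentrate at $w=-1$ (which, on the Riemann sphere, corresponds to $|z_{t,\alpha}|\to\infty$, consistently with $|z|/|1-z|\to 1$ there).
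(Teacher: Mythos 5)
Your proof is correct, but it follows a genuinely different route from the paper's. The paper argues directly in the $z$-plane: a boundary point satisfies $T_\alpha(z_{t,\alpha})=t$, and the elementary inequality $\log x\le x-1$ applied to that identity gives $\alpha t\le|1-z_{t,\alpha}|^2\le 1+|z_{t,\alpha}|^2$, forcing $|z_{t,\alpha}|\to\infty$ and $|1-z_{t,\alpha}|^2\sim|z_{t,\alpha}|^2$; plugging this into the defining relation $|f_{t,\alpha}(z)|^2=\alpha|z|^2/\bigl(\alpha|z|^2+(1-\alpha)|1-z|^2\bigr)$ finishes in three lines. You instead pass to the M\"obius coordinate $w=x+iy$ and exploit the parametrization from Lemma \ref{Jordan} to eliminate $y$, obtaining the exact identity $|f_{t,\alpha}(z)|^2=1-(1-\alpha)(e^{tx}-1)/x$ on the boundary (your algebra checks out: $A+4\alpha(1-\alpha)=x^2+2x(1-2\alpha)+1+y^2$ and the boundary relation give $A+4\alpha(1-\alpha)=4\alpha x/(e^{tx}-1)$), and then show the admissible $x$-interval $[x_{t,\alpha}^-,\tilde{x}_{t,\alpha}^-]$ collapses onto $\{-1\}$; since $x$ stays bounded away from $0$, $e^{tx}\to0$ and the formula gives the limit $\alpha$, uniformly. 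The root-localization step is slightly terse but sound: for any fixed $x_*<0$, $x_*\ne-1$, eventually $\phi_{t,\alpha}(x_*)>0$ while $\phi_{t,\alpha}\le0$ on the admissible interval, so both endpoints are squeezed toward $-1$ (this also rules out escape of $x_{t,\alpha}^-$ to $-\infty$); note your linearization $\phi_{t,\alpha}(-1+\eta)\approx\eta^2-4\alpha e^{-t}$ drops a factor $e^{t\eta}$ and is only heuristic, but it is not needed. Your observation that $w\to-1$ means $z_{t,\alpha}\to\infty$ shows the two arguments see the same geometry; what the paper's approach buys is brevity, while yours buys a closed-form, $y$-free expression for $|f_{t,\alpha}|^2$ on the whole boundary curve together with a quantitative rate, at the cost of relying on the case analysis $t>4\alpha$ from Lemma \ref{Jordan} (harmless here, since only large $t$ matters).
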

\begin{proof}
By definition, $z_{t,\alpha}\ne1$ and satisfies
\begin{equation*}
t=T_{\alpha}(z_{t,\alpha})=\frac{|1-z_{t,\alpha}|^2}{\alpha(|z_{t,\alpha}|^2-1)+(1-\alpha)|z_{t,\alpha}|^2}\log\left(\frac{\alpha|z_{t,\alpha}|^2+(1-\alpha)|1-z_{t,\alpha}|^2}{\alpha} \right).
\end{equation*}
But, since for all $x>0$
\begin{equation*}
\log(x)\le x-1, 
\end{equation*}
it follows that,
\begin{equation*}
\alpha t\le |1-z_{t,\alpha}|^2\le 1+|z_{t,\alpha}|^2.
\end{equation*}
Hence, for $t$ approaching infinity we have
\begin{equation*}
|1-z_{t,\alpha}|^2 \sim  |z_{t,\alpha}|^2 \rightarrow\infty,
\end{equation*}
and therefore, we obtain
\begin{equation*}
|f_{t,\alpha}(z_{t,\alpha})|^2=\frac{\alpha|z_{t,\alpha}|^2}{\alpha|z_{t,\alpha}|^2+(1-\alpha)|1-z_{t,\alpha}|^2} \sim \alpha .
\end{equation*}
\end{proof}

\end{document}